\documentclass[12pt,reqno]{amsart}

\usepackage{amssymb,amsmath, amsfonts, latexsym}
\usepackage{enumerate}
\usepackage{url}
\setcounter{tocdepth}{1} \oddsidemargin 0cm \evensidemargin 0cm
\topmargin 0cm

\textheight 23cm \textwidth 16,3cm

\newtheorem{thm}{Theorem}[section]
\newtheorem{cor}[thm]{Corollary}
\newtheorem{lem}[thm]{Lemma}

\newtheorem{rem}{Remark}[section]

\newcommand{\dE}{\mathbb{E}}
\newcommand{\dF}{\mathbb{F}}
\newcommand{\dR}{\mathbb{R}}
\newcommand{\dP}{\mathbb{P}}

\newcommand{\cF}{\mathcal{F}}
\newcommand{\cN}{\mathcal{N}}

\newcommand{\veps}{\varepsilon}
\newcommand{\wh}{\widehat}
\newcommand{\tn}{\wh{\theta}_{n}}
\newcommand{\rn}{\wh{\rho}_{n}}
\newcommand{\dn}{\wh{D}_{n}}
\newcommand{\en}{\wh{\veps}_{n}}
\newcommand{\ek}{\wh{\veps}_{k}}
\newcommand{\eek}{\wh{\veps}_{k-1}}

\newcommand{\superexp} {\underset{b_n^2}{\overset{\rm superexp}{\longrightarrow}} }
\newcommand{\superexpequiv} {\underset{b_n^2}{\overset{\rm superexp}{\sim}} }
\newcommand{\superexpn} {\underset{n}{\overset{\rm superexp}{\longrightarrow}} }

\def\G#1{\textup{\textbf{(G.\ref{#1})}}}
\def\CL#1{\textup{\textbf{(CL.\ref{#1})}}}

\font\calcal=cmsy10 scaled\magstep1
\def\build#1_#2^#3{\mathrel{\mathop{\kern 0pt#1}\limits_{#2}^{#3}}}
\def\liml{\build{\longrightarrow}_{}^{{\mbox{\calcal L}}}}

\setcounter{tocdepth}{1} \numberwithin{equation}{section}

\begin{document}

\title[Moderate deviations for the Durbin-Watson statistic]{Moderate deviations for the Durbin-Watson statistic related to the first-order autoregressive process}

\author{S.Val\`ere Bitseki Penda}
\email{Valere.Bitsekipenda@math.univ-bpclermont.fr}
%\address{Laboratoire de Math\'ematiques, CNRS UMR 6620, Universit\'e Blaise Pascal, Avenue des Landais, 63177 Aubi\`ere, France.}
\address{\vspace{-1cm}}

\author{Hac\`ene Djellout}
\email{Hacene.Djellout@math.univ-bpclermont.fr}
\address{Laboratoire de Math\'ematiques, CNRS UMR 6620, Universit\'e Blaise Pascal, Avenue des Landais, 63177 Aubi\`ere, France.}

\author{Fr\'ed\'eric Pro\"ia}
\email{Frederic.Proia@inria.fr}
\address{Universit\'e Bordeaux 1, Institut de Math\'ematiques de Bordeaux,
UMR 5251, and INRIA Bordeaux, team ALEA, 351 Cours de la Lib\'eration, 33405 Talence cedex, France.}

\keywords{Durbin-Watson statistic, Moderate deviation principle, First-order autoregressive process, Serial correlation}

%\date{\today}
\begin{abstract} The purpose of this paper is to investigate moderate deviations for the Durbin-Watson statistic associated with the stable first-order autoregressive process where the driven noise is also given by a first-order autoregressive process. We first establish a moderate deviation principle for both the least squares estimator of the unknown parameter of the autoregressive process as well as for the serial correlation estimator associated with the driven noise. It enables us to provide a moderate deviation principle for the Durbin-Watson statistic in the easy case where the driven noise is normally distributed and in the more general case where the driven noise satisfies a less restrictive Chen-Ledoux type condition.

\end{abstract}

\maketitle

\vspace{-0.5cm}

\begin{center}
\textit{AMS 2000 subject classifications: 60F10, 60G42, 62M10, 62G05.}
\end{center}

\medskip

\maketitle

\section{Introduction}

This paper is focused on the stable first-order autoregressive process where the driven noise is also given by a first-order autoregressive process. The purpose is to investigate moderate deviations for both the least squares estimator of the unknown parameter of the autoregressive process as well as for the serial correlation estimator associated with the driven noise. Our goal is to establish moderate deviations for the Durbin-Watson statistic \cite{DurbinWatson50}, \cite{DurbinWatson51}, \cite{DurbinWatson71}, in a lagged dependent random variables framework. First of all, we shall assume that the driven noise is normally distributed. Then, we will extend our investigation to the more general framework where the driven noise satisfies a less restrictive Chen-Ledoux type condition \cite{Chen98}, \cite{Ledoux92}. We are inspired by the recent paper of Bercu and Pro\"ia \cite{BercuProia11}, where the almost sure convergence and the central limit theorem are established for both the least squares estimators and the Durbin-Watson statistic. Our results are proved via an extensive use of the results of Dembo \cite{Dembo96}, Dembo and Zeitouni \cite{DemboZeitouni98} and Worms \cite{Worms00}, \cite{Worms01a}, \cite{Worms01b} on the one hand, and of the paper of Puhalskii \cite{Puhalskii97} and Djellout \cite{Djellout02} on the other hand, about moderate deviations for martingales. In order to introduce the Durbin-Watson statistic, we shall focus our attention on the first-order autoregressive process given, for all $n\geq 1$, by
\begin{equation}
\label{AR}
\vspace{1ex}
\left\{
\begin{array}[c]{ccccc}
X_{n} & = & \theta X_{n-1} & + & \veps_n \vspace{1ex}\\
\veps_n & = & \rho \veps_{n-1} & + & V_n
\end{array}
\right.
\end{equation}
where we shall assume that the unknown parameters $\vert \theta \vert < 1$ and $\vert \rho \vert < 1$ to ensure the stability of the model. In all the sequel, we also assume that $(V_{n})$ is a sequence of independent and identically distributed random variables with zero mean, positive variance $\sigma^2$ and satisfying some suitable assumptions. The square-integrable initial values $X_0$ and $\veps_0$ may be arbitrarily chosen. We have decided to estimate $\theta$ by the least squares estimator
\begin{equation}
\label{Est_Theta}
\tn = \frac{\sum_{k=1}^{n} X_{k} X_{k-1}}{\sum_{k=1}^{n} X_{k-1}^2}.
\end{equation}
Then, we also define a set of least squares residuals given, for all $1 \leq k \leq n$, by
\begin{equation}
\label{Est_Res}
\ek = X_{k} - \tn X_{k-1},
\end{equation}
which leads to the estimator of $\rho$,
\begin{equation}
\label{Est_Rho}
\rn = \frac{\sum_{k=1}^{n} \ek \eek}{\sum_{k=1}^{n} \eek^{\ 2}}.
\end{equation}
Finally, the Durbin-Watson statistic is defined, for $n \geq 1$, as
\begin{equation}
\label{Est_D}
\dn = \frac{\sum_{k=1}^{n} (\ek - \eek)^2}{\sum_{k=0}^{n} \ek^{\ 2}}.
\end{equation}
This well-known statistic was introduced by the pioneer work of Durbin and Watson \cite{DurbinWatson50}, \cite{DurbinWatson51}, \cite{DurbinWatson71}, in the middle of last century, to test the presence of a significative first order serial correlation in the residuals of a regression analysis. A wide range of litterature is available on the asymptotic behavior of the Durbin-Watson statistic, frequently used in Econometry. While it appeared to work pretty well in the classical independent framework, Malinvaud \cite{Malinvaud61} and Nerlove and Wallis \cite{NerloveWallis66} observed that, for linear regression models containing lagged dependent random variables, the Durbin-Watson statistic may be asymptotically biased, potentially leading to inadequate conclusions. Durbin \cite{Durbin70} proposed alternative tests to prevent this misuse, such as the \textit{h-test} and the \textit{t-test}, then substantial contributions were brought by Inder \cite{Inder86}, King and Wu \cite{KingWu91} and more recently Stocker \cite{Stocker06}. Lately, a set of results have been established by Bercu and Pro\"ia in \cite{BercuProia11}, in particular a test procedure as powerful as the \textit{h-test}, and they will be summarized thereafter as a basis for this paper.

\medskip

The paper is organized as follows. First of all, we recall the results recently established by Bercu and Pro\"ia \cite{BercuProia11}. In Section 2, we propose moderate deviation principles for the estimators of $\theta$ and $\rho$ and for the Durbin-Watson statistic, given by \eqref{Est_Theta}, \eqref{Est_Rho} and \eqref{Est_D}, under the normality assumption on the driven noise. Section 3 deals with the generalization of the latter results under a less restrictive Chen-Ledoux type condition on $(V_{n})$. Finally, all technical proofs are postponed to Section 4.

\medskip

\begin{lem}
\label{LEM_CVG_TLC_THETA}
We have the almost sure convergence of the autoregressive estimator,
\begin{equation*}
\lim_{n\rightarrow \infty} \tn = \theta^{*} \hspace{0.5cm} \textnormal{a.s.}
\end{equation*}
where the limiting value
\begin{equation}
\label{LimT}
\theta^{*} = \frac{\theta + \rho}{1 + \theta\rho}.
\end{equation}
In addition, as soon as $\dE[V_1^4] < \infty$, we also have the asymptotic normality,
\begin{equation*}
\sqrt{n} \left( \tn - \theta^{*} \right) \liml \cN \big( 0, \sigma^2_{\theta} \big)
\end{equation*}
where the asymptotic variance
\begin{equation}
\label{SigT}
\sigma^2_{\theta} = \frac{(1-\theta^2)(1-\theta\rho)(1-\rho^2)}{(1+\theta\rho)^3}.
\end{equation}
\end{lem}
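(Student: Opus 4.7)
The plan is to eliminate $\veps_n$ from the system \eqref{AR} by the substitution
\[
X_n = (\theta+\rho)X_{n-1} - \theta\rho\, X_{n-2} + V_n,
\]
which recognizes $(X_n)$ as a stable AR(2) process under $|\theta|<1$ and $|\rho|<1$. The associated Yule--Walker equations then yield $\gamma_1/\gamma_0 = (\theta+\rho)/(1+\theta\rho) = \theta^{*}$ together with $\gamma_0 = \sigma^{2}(1+\theta\rho)/[(1-\theta\rho)(1-\theta^{2})(1-\rho^{2})]$; these two identities already pin down the conjectured limiting slope and, eventually, the limiting variance. For the almost sure convergence I would invoke the ergodic theorem for the stationary solution of this stable recursion, the non-stationary initial condition being absorbed by the exponential forgetting of the past inherent to stable linear filters. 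This gives $n^{-1}\sum_{k=1}^{n} X_{k-1}^{2} \to \gamma_0$ and $n^{-1}\sum_{k=1}^{n} X_{k}X_{k-1} \to \gamma_1$ almost surely, and division yields $\tn \to \theta^{*}$ a.s.

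For the central limit theorem I would start from the decomposition
\[
\tn - \theta^{*} = \frac{\sum_{k=1}^{n} X_{k-1}(X_k - \theta^{*}X_{k-1})}{\sum_{k=1}^{n} X_{k-1}^{2}}.
\]
The key algebraic step, obtained by inserting the AR(2) form above and using the elementary identity $(\theta+\rho) - \theta^{*} = \theta\rho\,\theta^{*}$, is
\[
X_{k-1}(X_k - \theta^{*}X_{k-1}) = -\theta\rho\, X_{k-1}(X_{k-2} - \theta^{*}X_{k-1}) + X_{k-1} V_k.
\]
Summing over $k$ and shifting the index in the first sum produces the self-referential relation $(1+\theta\rho)\sum_{k=1}^{n}X_{k-1}(X_k - \theta^{*}X_{k-1}) = M_n + R_n$, where $M_n = \sum_{k=1}^{n} X_{k-1}V_k$ is a martingale with respect to $\cF_n = \sigma(V_1,\ldots,V_n,X_0,\veps_0)$ and $R_n$ is an $O_{\dP}(1)$ endpoint remainder, hence negligible on the $\sqrt{n}$ scale. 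The predictable bracket of $M$ satisfies $\langle M\rangle_n \sim \sigma^{2}\gamma_0\, n$, and $\dE[V_1^{4}]<\infty$ delivers the Lindeberg condition, so the martingale CLT yields $M_n/\sqrt{n} \liml \cN(0,\sigma^{2}\gamma_0)$.

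Combining these ingredients through Slutsky's lemma gives
\[
\sqrt{n}(\tn - \theta^{*}) \liml \cN\!\left(0,\frac{\sigma^{2}}{(1+\theta\rho)^{2}\gamma_0}\right),
\]
and substituting the expression for $\gamma_0$ above collapses the variance exactly to $\sigma^{2}_{\theta}$ in the form \eqref{SigT}. The main obstacle is conceptual: $X_k - \theta^{*}X_{k-1}$ is \emph{not} a martingale increment, because the noise $(\veps_n)$ is itself autocorrelated, so a naive AR(1)-style computation gives the wrong asymptotic variance. The algebraic fixed-point identity above, which extracts the factor $(1+\theta\rho)$ before the martingale CLT is applied, is precisely what repairs this and accounts for the cubic denominator $(1+\theta\rho)^{3}$ appearing in $\sigma^{2}_{\theta}$.
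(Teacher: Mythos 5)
Your proof is correct and follows essentially the same route as the paper: your identity $(1+\theta\rho)\sum_{k=1}^{n}X_{k-1}(X_k-\theta^{*}X_{k-1})=M_n+R_n$ is exactly the decomposition \eqref{Decomp_Theta} that the paper itself uses in Section 4.1 (with $\gamma_0$ equal to the limit $\ell$ of \eqref{LimS}), and the paper's stated proof of this lemma is simply a citation to Bercu and Pro\"ia, whose argument proceeds by the same martingale decomposition and martingale CLT. The variance bookkeeping $\sigma^2/((1+\theta\rho)^2\gamma_0)=\sigma^2_{\theta}$ checks out.
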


\medskip

\begin{lem}
\label{LEM_CVG_TLC_RHO}
We have the almost sure convergence of the serial correlation estimator,
\begin{equation*}
\lim_{n\rightarrow \infty} \rn = \rho^{*} \hspace{0.5cm} \textnormal{a.s.}
\end{equation*}
where the limiting value
\begin{equation}
\label{LimR}
\rho^{*} = \theta \rho \theta^{*}.
\end{equation}
Moreover, as soon as $\dE[V_1^4] < \infty$, we have the asymptotic normality,
\begin{equation*}
\sqrt{n} \Big( \rn - \rho^{*} \Big) \liml \cN \big( 0, \sigma^2_{\rho} \big)
\end{equation*}
with the asymptotic variance
\begin{equation}
\label{SigR}
\sigma^2_{\rho} = \frac{(1-\theta \rho)}{(1+\theta\rho)^3}\big((\theta+\rho)^2(1+\theta \rho)^2+(\theta \rho)^2 (1-\theta^2)(1-\rho^2)\big).
\end{equation}
On top of that, we have the joint asymptotic normality,
\begin{equation*}
\sqrt{n} \begin{pmatrix}
\ \tn - \theta^{*}  \\
\ \rn - \rho^{*}
\end{pmatrix}
 \liml \cN \big( 0, \Gamma \big)
\end{equation*}
where the covariance matrix
\begin{equation}
\label{Gamma}
\Gamma = \begin{pmatrix}
\sigma^2_{\theta} & \ \theta \rho \sigma^2_{\theta} \\
\theta \rho \sigma^2_{\theta} & \sigma^2_{\rho}
\end{pmatrix}.
\end{equation}
\end{lem}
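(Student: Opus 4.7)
The plan is to reduce $\tn - \theta^{*}$ and $\rn - \rho^{*}$ to a bivariate martingale sum plus asymptotically negligible residuals, and then to apply a multivariate martingale central limit theorem, so that the joint convergence, the marginal CLT for $\rn$, and the explicit form of $\Gamma$ all follow at once. Lemma~\ref{LEM_CVG_TLC_THETA} will be used throughout.

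First I would establish the almost sure limit $\rn \to \rho^{*}$. Substituting $X_k = \theta X_{k-1} + \veps_k$ in \eqref{Est_Res} gives the key identity $\ek = \veps_k - (\tn - \theta) X_{k-1}$, and expanding numerator and denominator of \eqref{Est_Rho} produces
\begin{equation*}
\frac{1}{n}\sum_{k=1}^{n} \ek \eek \ = \ \frac{1}{n}\sum_{k=1}^{n} \veps_k \veps_{k-1} \ - \ (\tn - \theta)\, A_n \ + \ (\tn - \theta)^2\, B_n,
\end{equation*}
and similarly for $\frac{1}{n}\sum \eek^{\ 2}$, where $A_n, B_n$ are Cesàro averages of products of $X$ and $\veps$. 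By stationarity of $(X_n, \veps_n)$, each such average converges almost surely to an explicit rational expression in $\theta$, $\rho$, $\sigma^2$, obtained from the second moments $\dE[X_n^2]$, $\dE[\veps_n^2]$, $\dE[X_n \veps_n]$, $\dE[X_n \veps_{n-1}]$. Combined with $\tn \to \theta^{*}$ from Lemma~\ref{LEM_CVG_TLC_THETA}, a short algebraic simplification yields $\rn \to \theta\rho\theta^{*} = \rho^{*}$.

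For the joint central limit theorem, the next step is to establish the linearization
\begin{equation*}
\sqrt{n}\begin{pmatrix} \tn - \theta^{*}  \\ \rn - \rho^{*} \end{pmatrix} \ = \ C_n \cdot \frac{1}{\sqrt{n}}\sum_{k=1}^{n} \xi_k \ + \ o_{\dP}(1),
\end{equation*}
where $C_n$ converges a.s.\ to an invertible $2 \times 2$ matrix and $(\xi_k)$ is a bivariate martingale difference with respect to $\cF_k = \sigma(X_0, \veps_0, V_1, \ldots, V_k)$. The natural first component is $X_{k-1} V_k$, which drives $\tn - \theta^{*}$; the second is a linear combination of $\veps_{k-1} V_k$ and an additional term absorbing the drift introduced by replacing $\theta$ with $\theta^{*}$ in $\ek$. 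Under $\dE[V_1^4] < \infty$, the predictable quadratic variation $\frac{1}{n}\sum \dE[\xi_k \xi_k^\top \mid \cF_{k-1}]$ converges a.s.\ and a Lindeberg condition is readily verified, so the bivariate martingale CLT of Hall and Heyde applies and delivers the joint Gaussian limit.

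The main obstacle will be the algebraic bookkeeping needed to identify the explicit form of $\Gamma$, and in particular the off-diagonal entry $\theta \rho\, \sigma^2_{\theta}$. This requires computing the pairwise conditional covariances of the components of $\xi_k$ and then conjugating by the limit of $C_n$; the final expressions collapse into \eqref{SigR} and \eqref{Gamma} only after repeated use of the identity $\theta^{*} = (\theta+\rho)/(1+\theta\rho)$ to cancel several shared factors. Once $\Gamma$ is identified, the marginal statement $\sqrt{n}(\rn - \rho^{*}) \liml \cN(0, \sigma^2_{\rho})$ is just a projection of the joint CLT.
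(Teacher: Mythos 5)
Your proposal is correct and follows essentially the same route as the source: the paper itself defers this lemma to Bercu and Pro\"ia, but its own Section 4.2 machinery (decomposition \eqref{Decomp_Joint}) is exactly your linearization $C_n\cdot n^{-1/2}\sum_k \xi_k + o_{\dP}(1)$, with the vector martingale built from $X_{k-1}V_k$ and $X_{k-2}V_k$ --- an invertible reparametrization of your pair $X_{k-1}V_k$, $\veps_{k-1}V_k$, since $\veps_{k-1}=X_{k-1}-\theta X_{k-2}$ --- followed by a multivariate martingale CLT and identification of $\Gamma = \sigma^2 A\Lambda A^{\prime}$. The only loose point is the appeal to ``stationarity'': with arbitrary square-integrable initial values the process is merely asymptotically stationary, so the Ces\`aro limits should instead be justified by the law of large numbers for the stable autoregressive recursion (as in the reference), which does not affect the limiting values.
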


\medskip

\begin{lem}
\label{LEM_CVG_TLC_D}
We have the almost sure convergence of the Durbin-Watson statistic,
\begin{equation*}
\lim_{n\rightarrow \infty} \dn = D^{*} \hspace{0.5cm} \textnormal{a.s.}
\end{equation*}
where the limiting value
\begin{equation}
\label{LimD}
D^{*} = 2(1 - \rho^{*}).
\end{equation}
In addition, as soon as $\dE[V_1^4] < \infty$, we have the asymptotic normality,
\begin{equation*}
\sqrt{n} \left( \dn - D^{*} \right) \liml \cN \big( 0, \sigma^2_{D} \big)
\end{equation*}
where the asymptotic variance
\begin{equation}
\label{SigD}
\sigma^2_{D} = 4 \sigma^2_{\rho}.
\end{equation}
\end{lem}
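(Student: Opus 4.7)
The plan is to reduce Lemma \ref{LEM_CVG_TLC_D} to Lemma \ref{LEM_CVG_TLC_RHO} by writing $\dn$ as an explicit function of $\rn$ plus a negligible boundary remainder. Expanding the numerator of \eqref{Est_D} via $(\ek - \eek)^2 = \ek^{\ 2} + \eek^{\ 2} - 2\ek\eek$ and using the defining identity $\sum_{k=1}^{n}\ek\eek = \rn\sum_{k=1}^{n}\eek^{\ 2}$ from \eqref{Est_Rho}, then setting $T_n = \sum_{k=0}^{n}\ek^{\ 2}$ and using the elementary relations $\sum_{k=1}^{n}\ek^{\ 2} = T_n - \wh{\veps}_{0}^{\ 2}$ and $\sum_{k=1}^{n}\eek^{\ 2} = T_n - \en^{\ 2}$, one obtains the key decomposition
\begin{equation}
\label{Dn_decomp}
\dn = 2(1-\rn) - \frac{\wh{\veps}_{0}^{\ 2} + (1-2\rn)\,\en^{\ 2}}{T_n}.
\end{equation}

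Thanks to the stability of the model, $T_n/n$ converges almost surely to a positive constant; $\wh{\veps}_{0}$ is a fixed square-integrable initial value; and $\en = \veps_n - (\tn - \theta)X_{n-1}$ is bounded in probability by the strong convergence of $\tn$ given by Lemma \ref{LEM_CVG_TLC_THETA} and the stationary behaviour of $\veps_n$ and $X_{n-1}$. Consequently, the boundary remainder in \eqref{Dn_decomp} is $O_{\dP}(n^{-1})$, and combining with $\rn \to \rho^{*}$ a.s. from Lemma \ref{LEM_CVG_TLC_RHO} together with $D^{*} = 2(1-\rho^{*})$ yields at once the almost sure convergence $\dn \to D^{*}$.

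For the asymptotic normality, multiplying \eqref{Dn_decomp} by $\sqrt{n}$ produces
\begin{equation*}
\sqrt{n}\,(\dn - D^{*}) = -2\sqrt{n}\,(\rn - \rho^{*}) - \frac{\sqrt{n}}{T_n}\bigl(\wh{\veps}_{0}^{\ 2} + (1-2\rn)\,\en^{\ 2}\bigr).
\end{equation*}
Since $T_n \sim c\,n$ almost surely with $c>0$, $\rn \to \rho^{*}$ a.s., and $\en^{\ 2}$ is tight, the second term on the right-hand side is $O_{\dP}(n^{-1/2})$ and therefore vanishes in probability. Slutsky's lemma together with the Gaussian limit for $\sqrt{n}(\rn - \rho^{*})$ from Lemma \ref{LEM_CVG_TLC_RHO} then delivers the stated convergence, the asymptotic variance being $(-2)^{2}\sigma_{\rho}^{2} = 4\sigma_{\rho}^{2} = \sigma_{D}^{2}$.

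The only mildly technical point is the tightness of $\en^{\ 2}$. The decomposition $\en = \veps_n - (\tn - \theta)X_{n-1}$ makes this a consequence of $\dE[V_1^{4}] < \infty$, which ensures uniform boundedness of the second moments of $\veps_n$ and $X_{n-1}$ along the stationary direction, combined with $\tn - \theta = O_{\dP}(1)$ from Lemma \ref{LEM_CVG_TLC_THETA}. No additional martingale argument beyond those already used in the proofs of Lemmas \ref{LEM_CVG_TLC_THETA} and \ref{LEM_CVG_TLC_RHO} is required.
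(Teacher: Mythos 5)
Your proof is correct and follows essentially the same route as the paper, which reduces $\dn - D^{*}$ to $-2\,(\rn - \rho^{*})$ plus a negligible boundary term (the paper itself defers to decomposition (C.4) of Bercu and Pro\"ia, reused in its Section 4.3, where the explosion coefficient $f_n$ plays the role of your $\en^{\,2}/T_n$ correction). One small imprecision: for the almost sure convergence you should observe that the remainder tends to $0$ almost surely --- which follows from $T_n/n$ converging a.s.\ to a positive limit together with $\en^{\,2}/n \to 0$ a.s.\ (itself a consequence of $X_n^2/n \to 0$ and $\veps_n^2/n \to 0$ a.s.) --- rather than invoking an $O_{\dP}(n^{-1})$ bound, which only yields convergence in probability.
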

\begin{proof}
The proofs of Lemma \ref{LEM_CVG_TLC_THETA}, Lemma \ref{LEM_CVG_TLC_RHO} and Lemma \ref{LEM_CVG_TLC_D} may be found in \cite{BercuProia11}.
\end{proof}

\medskip

Our objective is to establish a set of moderate deviation principles on these estimates in order to get a better asymptotic precision than the central limit theorem. In all the sequel, $(b_{n})$ will denote a sequence of increasing positive numbers satisfying $1 = o(b_{n}^2)$ and $b_{n}^2 = o(n)$, that is
\begin{equation}
\label{BN_RATE}
b_n \longrightarrow \infty, \hspace{1.5cm} \frac{b_n}{\sqrt n} \longrightarrow 0.
\end{equation}

\medskip

\noindent{\bf Remarks and Notations.} \textit{In the whole paper, for any matrix $M$, $M^{\prime}$ and $\Vert M \Vert$ stand for the transpose and the euclidean norm of $M$, respectively. For any square matrix $M$, $\det(M)$ and $\rho(M)$ are the determinant and the spectral radius of $M$, respectively. Moreover, we will shorten \textnormal{large deviation principle} by LDP. In addition, for a sequence of random variables $(Z_{n})_{n}$ on $\dR^{d \times p}$, we say that $(Z_{n})_{n}$ converges $(b_{n}^2)-$superexponentially fast in probability to some random variable $Z$ if, for all $\delta > 0$,
\begin{equation*}
\limsup_{n \rightarrow \infty} \frac{1}{b_{n}^2} \log \dP\Big( \left\Vert Z_{n} - Z \right\Vert > \delta \Big) = -\infty.
\end{equation*}
This \textnormal{exponential convergence} with speed $b_{n}^2$ will be shortened as
\begin{equation*}
Z_{n} \superexp Z.
\end{equation*}
The \textnormal{exponential equivalence} with speed $b_{n}^2$ between two sequences of  random variables $(Y_{n})_{n}$ and $(Z_{n})_{n}$, whose precise definition is given in Definition 4.2.10 of \cite{DemboZeitouni98}, will be shortened as
\begin{equation*}
Y_{n} \superexpequiv Z_{n}.
\end{equation*}}

\section{On moderate deviations under the Gaussian condition}

In this first part, we focus our attention on moderate deviations for the Durbin-Watson statistic in the easy case where the driven noise $(V_{n})$ is normally distributed. This restrictive assumption allows us to reduce the set of hypothesis to the existence of $t > 0$ such that

\begin{enumerate}[\bf{({G}.}1)]
\item \label{HYP_G1}
\begin{equation*}
\dE\Big[ \exp(t \veps_0^2) \Big] < \infty,
\end{equation*}
\item \label{HYP_G2}
\begin{equation*}
\dE\Big[ \exp(t X_0^2) \Big] < \infty.
\end{equation*}
\end{enumerate}

\begin{thm}
\label{THM_PDMGAUSS_THETA}
Assume that there exists $t > 0$ such that \G{HYP_G1} and \G{HYP_G2} are satisfied. Then, the sequence
\begin{equation*}
\left( \frac{\sqrt{n}}{b_{n}} \left( \tn - \theta^{*} \right) \right)_{n \geq 1}
\end{equation*}
satisfies an LDP on $\dR$ with speed $b_{n}^2$ and good rate function
\begin{equation}
\label{Theta_Gauss_Rate}
I_{\theta}(x) = \frac{x^2}{2 \sigma^2_{\theta}}
\end{equation}
where $\sigma^2_{\theta}$ is given by \eqref{SigT}.
\end{thm}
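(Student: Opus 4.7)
The plan is to reduce the MDP for $\tn - \theta^*$ to Puhalskii's MDP for the martingale $M_n = \sum_{k=2}^n X_{k-1} V_k$, via an algebraic decomposition whose non-martingale contribution consists of boundary terms that are superexponentially negligible at the MDP scale. Eliminating $\veps_n$ in \eqref{AR} yields the AR(2) representation $X_n = (\theta + \rho) X_{n-1} - \theta\rho X_{n-2} + V_n$ for $n \geq 2$. Multiplying by $X_{n-1}$, summing from $k=2$ to $n$, and using the identity $\theta + \rho = \theta^*(1 + \theta\rho)$, one obtains
\begin{equation*}
(1 + \theta\rho)\, S_n (\tn - \theta^*) = M_n + R_n,
\end{equation*}
with $S_n = \sum_{k=1}^n X_{k-1}^2$ and $R_n = \theta\rho X_{n-1} X_n - (\theta+\rho) X_0^2 + X_0 X_1 - X_0 V_1$ collecting finitely many bilinear boundary terms.

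Under \G{HYP_G1}--\G{HYP_G2} and the Gaussian noise assumption, the stability of the AR(2) forces $(X_n)$ to be Gaussian with variance bounded uniformly in $n$, hence each summand of $R_n$ has sub-exponential tails of the form $\dP(|X_i X_j| > t) \leq C e^{-ct}$. Therefore $\dP(|R_n| > \delta b_n \sqrt n) \leq C e^{-c \delta b_n \sqrt n}$; since $b_n \sqrt n / b_n^2 = \sqrt n/b_n \to \infty$, this decays faster than $e^{-\kappa b_n^2}$ for every $\kappa > 0$, giving $R_n/(b_n \sqrt n) \superexp 0$. Next, the superexponential convergence
\begin{equation*}
\frac{S_n}{n} \superexpn \ell_X, \qquad \ell_X = \frac{\sigma^2(1+\theta\rho)}{(1-\theta\rho)(1-\theta^2)(1-\rho^2)},
\end{equation*}
to the stationary variance of the AR(2) follows from the LDP results of Worms \cite{Worms01a, Worms01b} on quadratic functionals of stable Gaussian linear processes.

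The bracket of $M_n$ is $\langle M\rangle_n = \sigma^2 S_n$, so $\langle M\rangle_n / n \superexpn \sigma^2 \ell_X$, and the exponential Lindeberg condition is automatic in the Gaussian setting. Puhalskii's criterion \cite{Puhalskii97} (or Djellout \cite{Djellout02}) then yields the LDP for $M_n/(b_n \sqrt n)$ on $\dR$ at speed $b_n^2$ with good rate function $J(x) = x^2/(2\sigma^2 \ell_X)$. Combining the previous steps,
\begin{equation*}
\frac{\sqrt n}{b_n}(\tn - \theta^*) \superexpequiv \frac{1}{(1+\theta\rho)\,\ell_X} \cdot \frac{M_n}{b_n \sqrt n},
\end{equation*}
and the contraction principle \cite{DemboZeitouni98} transfers the LDP to the left-hand side with rate $(1+\theta\rho)^2 \ell_X\, x^2 /(2 \sigma^2)$, which coincides with \eqref{Theta_Gauss_Rate} after the routine algebraic identity $(1+\theta\rho)^2 \ell_X / \sigma^2 = 1/\sigma^2_\theta$.

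The hard part will be the superexponential convergence $S_n/n \superexpn \ell_X$ at the strong speed $n$, which rests on genuine LDP-type estimates for quadratic forms of a Gaussian stationary process rather than on a martingale argument; a secondary bookkeeping issue is to check that all boundary terms arising from the AR(2) sum decomposition remain negligible when divided by $\sqrt n\, b_n$.
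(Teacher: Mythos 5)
Your overall route is the paper's own: the same algebraic decomposition $(1+\theta\rho)\,S_{n-1}(\tn-\theta^{*})=M_n+R_n(\theta)$, negligibility of the bilinear boundary terms at scale $b_n\sqrt n$, exponential convergence of $S_n/n$, a martingale MDP for $M_n/(b_n\sqrt n)$, exponential equivalence and contraction; the final rate-function algebra is correct. Two steps, however, are stated incorrectly or left with a real hole.

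First, the convergence you single out as the hard part, $S_n/n\superexpn\ell_X$ \emph{at speed $n$}, is false. Already from the lower bound $S_n\geq c\,L_n$ (up to initial terms) and Cram\'er's theorem for the i.i.d.\ sum $L_n=\sum V_k^2$, the quantity $\tfrac1n\log\dP(|S_n/n-\ell_X|>\delta)$ tends to a \emph{finite} negative constant, not to $-\infty$; the LDPs of Worms/Bryc--Dembo for quadratic functionals of stable Gaussian processes have finite rate functions near the mean, so they give exponential, not superexponential, decay at speed $n$. What is true, and all that the argument needs, is the convergence at speed $b_n^2$ (the paper's Lemma \ref{LEM_CVGEXP_SN}): since $b_n^2=o(n)$, any bound $e^{-cn}$ with $c>0$ is already $o(e^{-\kappa b_n^2})$ for every $\kappa>0$. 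Second, ``the exponential Lindeberg condition is automatic in the Gaussian setting, Puhalskii's criterion then yields the LDP'' skips the hypothesis that actually fails: Puhalskii's martingale MDP (Theorem \ref{THM_PUHALSKII_CL} here) also requires the increments to be a.s.\ bounded by $c\sqrt n/b_n$, which the Gaussian increments $X_{k-1}V_k$ are not. You must either run a truncation/exponentially-good-approximation argument (as the paper does in the Chen--Ledoux case) or invoke the Gaussian-specific martingale result of Worms (Theorem \ref{THM_WORMS_GAUSS}), which needs only the exponential convergence of the bracket; the paper takes the latter route. A minor further point: $(X_n)$ is not Gaussian, since $X_0$ and $\veps_0$ are arbitrary under \G{HYP_G1}--\G{HYP_G2}; the sub-exponential tails of the boundary products come from those exponential-moment assumptions together with the Gaussianity of the $V_k$, not from Gaussianity of $X_n$ itself.
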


\begin{thm}
\label{THM_PDMGAUSS_RHO}
Assume that there exists $t > 0$ such that \G{HYP_G1} and \G{HYP_G2} are satisfied. Then, as soon as $\theta \neq -\rho$, the sequence
\begin{equation*}
\left( \frac{\sqrt{n}}{b_{n}} \begin{pmatrix}
\tn - \theta^{*} \\
\rn - \rho^{*}
\end{pmatrix} \right)_{n \geq 1}
\end{equation*}
satisfies an LDP on $\dR^2$ with speed $b_{n}^2$ and good rate function
\begin{equation}
\label{Joint_Gauss_Rate}
K(x) = \frac{1}{2} x^{\prime} \Gamma^{-1} x
\end{equation}
where $\Gamma$ is given by \eqref{Gamma}. In particular, the sequence
\begin{equation*}
\left( \frac{\sqrt{n}}{b_{n}} \Big( \rn - \rho^{*} \Big) \right)_{n \geq 1}
\end{equation*}
satisfies an LDP on $\dR$ with speed $b_{n}^2$ and good rate function
\begin{equation}
\label{Rho_Gauss_Rate}
I_{\rho}(x) = \frac{x^2}{2 \sigma^2_{\rho}}
\end{equation}
where $\sigma^2_{\rho}$ is given by \eqref{SigR}.
\end{thm}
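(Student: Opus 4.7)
The strategy is to identify a two-dimensional martingale $M_n \in \dR^2$ built from the innovations $(V_k)$ whose normalized form $M_n / (b_n \sqrt{n})$ is $(b_n^2)$-exponentially equivalent to $(\sqrt{n}/b_n) (\tn - \theta^*, \rn - \rho^*)^\prime$. Once this is in place, the joint LDP follows from a martingale MDP (Puhalskii \cite{Puhalskii97}, Djellout \cite{Djellout02}), and the marginal LDP for $\rn - \rho^*$ is a consequence of the contraction principle applied to the projection $(x,y) \mapsto y$. The non-degeneracy condition $\theta \neq -\rho$ ensures $\theta^* \neq 0$, hence $\det(\Gamma) > 0$ by \eqref{SigT}, \eqref{SigR}, and \eqref{Gamma}, so that $\Gamma^{-1}$ exists and \eqref{Joint_Gauss_Rate} is a bona fide good rate function.

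For the decomposition, I would first recycle the martingale used in the proof of Theorem \ref{THM_PDMGAUSS_THETA} to write
\begin{equation*}
\tn - \theta^* \;=\; \frac{1}{S_n} \sum_{k=1}^n \psi^{(1)}_{k-1} V_k \;+\; R^{(1)}_n,
\end{equation*}
with $S_n = \sum_{k=1}^n X_{k-1}^2$, $\psi^{(1)}_{k-1}$ a predictable scalar, and $R^{(1)}_n$ a remainder. Then, plugging the residual identity $\ek = (\theta - \tn)X_{k-1} + \veps_k$ into the numerator $\sum \ek \eek$ and the denominator $\sum \eek^{\,2}$ of $\rn$, and using the AR(2) representation $X_n = (\theta+\rho)X_{n-1} - \theta\rho X_{n-2} + V_n$, I would derive an analogous decomposition
\begin{equation*}
\rn - \rho^* \;=\; \frac{1}{\Lambda_n} \sum_{k=1}^n \psi^{(2)}_{k-1} V_k \;+\; R^{(2)}_n,
\end{equation*}
with $\Lambda_n = \sum_{k=1}^n \eek^{\,2}$, as in the CLT proof of Bercu and Pro\"ia \cite{BercuProia11}. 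Stacking the two martingale pieces produces $M_n = \sum_{k=1}^n \Phi_{k-1} V_k$ with $\Phi_{k-1} = (\psi^{(1)}_{k-1}, \psi^{(2)}_{k-1})^\prime$.

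To apply the martingale MDP, I then need to check that the predictable quadratic variation satisfies $\langle M \rangle_n / n \superexp \sigma^2 C$ for a deterministic matrix $C$ such that, after normalization by $(S_n/n, \Lambda_n/n)$ in the denominators, one recovers exactly $\Gamma$; and that the jumps $\Phi_{k-1} V_k$ satisfy a Lindeberg-type superexponential condition (\G{HYP_G1}--\G{HYP_G2} together with the Gaussianity of $V_k$ make this automatic via Laplace-transform bounds on quadratic functionals, in the spirit of Worms \cite{Worms00}, \cite{Worms01a}, \cite{Worms01b}). The remainders $R^{(1)}_n$ and $R^{(2)}_n$, once multiplied by $\sqrt{n}/b_n$, must be shown to be $(b_n^2)$-superexponentially negligible. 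This reduces to establishing
\begin{equation*}
\frac{S_n}{n} \superexp \ell_1, \qquad \frac{\Lambda_n}{n} \superexp \ell_2, \qquad \frac{1}{n}\sum_{k=1}^n X_{k-1}\veps_{k-1} \superexp \ell_3, \ldots
\end{equation*}
for the appropriate deterministic limits, each of which is again a Gaussian Laplace-transform estimate for a quadratic form in the AR(2) process $(X_n)$.

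The concluding step combines Puhalskii's theorem, the exponential equivalence $Y_n \superexpequiv Z_n$ between the statistic and the martingale, and the contraction principle for the projection onto the second coordinate. The main obstacle, I expect, will be the superexponential convergence of the predictable quadratic variation and of the empirical averages listed above: because $(X_n)$ is an AR(2) process driven by Gaussian noise, the required exponential-moment control has to be propagated through the recursion $X_n = (\theta+\rho)X_{n-1} - \theta\rho X_{n-2} + V_n$, and the cross-terms mixing $X_k$'s and $\veps_k$'s demand some care. Once these superexp estimates are secured, matching the prefactor of the resulting quadratic rate function with $\tfrac{1}{2} x^\prime \Gamma^{-1} x$ is a purely algebraic verification using the closed-form expressions \eqref{SigT}, \eqref{SigR}, \eqref{Gamma}.
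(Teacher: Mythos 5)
Your overall architecture matches the paper's: write the pair of estimators as a random normalizing factor applied to a two-dimensional martingale plus a superexponentially negligible remainder, prove a joint MDP for that martingale, transfer it by exponential equivalence, and finish with the contraction principle (including the projection for the marginal of $\rn$, and the observation that $\theta\neq-\rho$ is exactly the invertibility condition for $\Gamma$). The paper's concrete realization takes $Z_n=(M_n,N_n)^{\prime}$ with $M_n=\sum_{k=1}^n X_{k-1}V_k$ and $N_n=\sum_{k=2}^n X_{k-2}V_k$, and collects all the sample-dependent coefficients --- including those involving $\tn$ itself, which cannot be absorbed into predictable weights $\psi^{(2)}_{k-1}$ since $\tn$ depends on the whole sample --- into a $2\times 2$ matrix $A_n$ converging superexponentially to a deterministic limit $A$, whence $\Gamma=\sigma^2 A\Lambda A^{\prime}$. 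Your sketch leaves the weights $\psi^{(2)}_{k-1}$ unspecified and implicitly predictable, which hides this point, but it is the same idea once the $\tn$-dependence is pushed into the outer normalizer, and your list of required superexponential limits ($S_n/n$, the cross terms, the residual sums of squares) is exactly the content of Lemma \ref{LEM_CVGEXP_SN}, Corollaries \ref{COR_CVGEXP_PN} and \ref{COR_CVGEXP_QN}, and Lemma \ref{LEM_CVGEXP_AN}.

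The one step that would fail as written is the appeal to Puhalskii's martingale MDP for the stacked martingale. Puhalskii's theorem (Theorem \ref{THM_PUHALSKII_CL} in the paper) requires almost surely bounded increments, $\vert m^n_k\vert\le c\sqrt n/b_n$, and the increments $X_{k-1}V_k$ are unbounded even under the Gaussian assumption; the Lindeberg condition is not ``automatic'' and the boundedness condition is simply false without modification. Making Puhalskii's result applicable requires the double truncation $X^{(r)}_k$, $V^{(R)}_k$ and the exponentially-good-approximation argument that the paper carries out at length in Section 4.4 for the Chen--Ledoux case. In the Gaussian setting the paper avoids all of this by invoking Worms' MDP for martingales driven by Gaussian noise (Theorem \ref{THM_WORMS_GAUSS}, with $p=2$), which needs only the superexponential convergence of $\frac{1}{n}\sum_{k}Y_kY_k^{\prime}$ --- precisely the estimates you already propose to prove. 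So either replace Puhalskii by Worms' Gaussian result, or add the truncation machinery; as it stands the martingale MDP step is not justified.
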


\begin{rem}
\label{REM_INV_GAMMA}
The covariance matrix $\Gamma$ is invertible if and only if $\theta\neq - \rho$ since one can see by a straightforward calculation that
\begin{equation*}
\det(\Gamma)=  \frac{\sigma^2_\theta (\theta+\rho)^2(1-\theta \rho)}{(1+\rho^2)}.
\end{equation*}
Moreover, in the particular case where $\theta= -\rho$, the sequences
\begin{equation*}
\left( \frac{\sqrt{n}}{b_{n}} \left( \tn - \theta^{*} \right) \right)_{n \geq 1} \hspace{1cm} \text{and} \hspace{1cm} \left( \frac{\sqrt{n}}{b_{n}} \Big( \rn - \rho^{*} \Big) \right)_{n \geq 1}
\end{equation*}
satisfy LDP on $\dR$ with speed $b_{n}^2$ and good rate functions respectively given by
\begin{equation*}
I_{\theta}(x) = \frac{x^2 (1 - \theta^2)}{2 (1 + \theta^2)} \hspace{1cm} \text{and} \hspace{1cm} I_{\rho}(x) = \frac{x^2 (1 - \theta^2)}{2 \theta^4 (1 + \theta^2)}.
\end{equation*}
\end{rem}

\begin{thm}
\label{THM_PDMGAUSS_D}
Assume that there exists $t > 0$ such that \G{HYP_G1} and \G{HYP_G2} are satisfied. Then, the sequence
\begin{equation*}
\left( \frac{\sqrt{n}}{b_{n}} \left( \dn - D^{*} \right) \right)_{n \geq 1}
\end{equation*}
satisfies an LDP on $\dR$ with speed $b_{n}^2$ and good rate function
\begin{equation}
\label{D_Gauss_Rate}
I_{D}(x) = \frac{x^2}{2 \sigma^2_{D}}
\end{equation}
where $\sigma^2_{D}$ is given by \eqref{SigD}.
\end{thm}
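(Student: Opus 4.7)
\medskip

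\noindent\textbf{Proof plan.} The strategy is to reduce Theorem \ref{THM_PDMGAUSS_D} to Theorem \ref{THM_PDMGAUSS_RHO} by exploiting the identity $D^{*}=2(1-\rho^{*})$. Concretely, the plan is to show that, at speed $b_{n}^{2}$, the rescaled Durbin--Watson statistic is exponentially equivalent to a simple affine transform of $\rn$, namely
\begin{equation*}
\frac{\sqrt{n}}{b_{n}}\bigl(\dn-D^{*}\bigr)\superexpequiv-\,2\,\frac{\sqrt{n}}{b_{n}}\bigl(\rn-\rho^{*}\bigr),
\end{equation*}
and then to invoke the contraction principle applied to the one-dimensional LDP for $\rn$ given by Theorem \ref{THM_PDMGAUSS_RHO}. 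Since $I_{\rho}(-x/2)=x^{2}/(8\sigma_{\rho}^{2})=x^{2}/(2\sigma_{D}^{2})$, the rate function $I_{D}$ appears automatically.

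\medskip

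The first step is an algebraic decomposition. Expanding $(\ek-\eek)^{2}=\ek^{2}-2\ek\eek+\eek^{2}$ in the numerator of \eqref{Est_D} and using the definition \eqref{Est_Rho} of $\rn$, one obtains after a short manipulation
\begin{equation*}
\dn \,=\, 2(1-\rn)\,+\,\frac{(2\rn-1)\,\wh{\veps}_{n}^{\,2}-\wh{\veps}_{0}^{\,2}}{\sum_{k=0}^{n}\wh{\veps}_{k}^{\,2}}.
\end{equation*}
Thus the proof boils down to showing that the remainder term, once multiplied by $\sqrt{n}/b_{n}$, converges to $0$ at speed $b_{n}^{2}$ in the sense of $\superexp$.

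\medskip

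The second step is precisely this superexponential negligibility. I would argue as follows. Under \G{HYP_G1}--\G{HYP_G2}, Gaussianity of $(V_{n})$ propagates through the stable recursion \eqref{AR} so that $X_{n}$ and $\veps_{n}$ inherit sub-Gaussian moments uniformly in $n$; consequently $\wh{\veps}_{0}^{\,2}$ and $\wh{\veps}_{n}^{\,2}$ admit tails of the form $\dP(\wh{\veps}_{n}^{\,2}\geq \delta n)\leq\exp(-c\delta n)$, which combined with $b_{n}^{2}=o(n)$ yields $\wh{\veps}_{n}^{\,2}/n\superexp 0$, and similarly for $\wh{\veps}_{0}^{\,2}/n$. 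For the denominator, ingredients already used in the proof of Theorem \ref{THM_PDMGAUSS_THETA} provide a superexponential lower bound $\tfrac{1}{n}\sum_{k=0}^{n}\wh{\veps}_{k}^{\,2}\superexp \ell>0$. Since $\rn\superexp\rho^{*}$ is also available (as an immediate by-product of Theorem \ref{THM_PDMGAUSS_RHO}), the factor $(2\rn-1)$ is superexponentially bounded, and the remainder is of order $1/n$ in the $\superexp$ sense. Multiplying by $\sqrt{n}/b_{n}=o(\sqrt{n})$ still leaves a factor of order $1/(b_{n}\sqrt{n})$, which tends to $0$, hence the exponential equivalence holds.

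\medskip

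Once the equivalence is established, the conclusion is immediate: the contraction principle applied to the continuous linear map $x\mapsto -2x$ transforms the LDP for $\sqrt{n}/b_{n}(\rn-\rho^{*})$ with rate $I_{\rho}$ into an LDP for $-2\sqrt{n}/b_{n}(\rn-\rho^{*})$ with rate $x\mapsto I_{\rho}(-x/2)=x^{2}/(2\sigma_{D}^{2})=I_{D}(x)$, and Theorem 4.2.13 of \cite{DemboZeitouni98} transfers this LDP to the exponentially equivalent sequence $\sqrt{n}/b_{n}(\dn-D^{*})$. The main technical obstacle is the superexponential control of the boundary residuals $\wh{\veps}_{0}$ and $\wh{\veps}_{n}$ together with the uniform lower bound on $\tfrac{1}{n}\sum \wh{\veps}_{k}^{\,2}$, both of which require the sub-Gaussian moment bounds afforded by hypotheses \G{HYP_G1}--\G{HYP_G2}; everything else is a soft application of the contraction principle and exponential equivalence.
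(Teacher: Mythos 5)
Your proposal is correct and follows essentially the same route as the paper: the paper writes $\frac{\sqrt{n}}{b_{n}}(\dn-D^{*})=-2\frac{\sqrt{n}}{b_{n}}(1-f_{n})(\rn-\rho^{*})+\frac{\sqrt{n}}{b_{n}}\zeta_{n}$ with the explosion coefficient $f_{n}=\wh{\veps}_{n}^{\,2}/J_{n}$ (citing decomposition (C.4) of Bercu--Pro\"ia), shows $f_{n}$ and $\zeta_{n}$ are superexponentially negligible, and concludes via exponential equivalence and the contraction principle with $x\mapsto I_{\rho}(-x/2)$, exactly as you do. The only point to tighten is the scale of your negligibility estimate: what is actually needed is $\wh{\veps}_{n}^{\,2}/(b_{n}\sqrt{n})\superexp 0$ rather than $\wh{\veps}_{n}^{\,2}/n\superexp 0$ (so ``the remainder is of order $1/n$'' is too loose), but this stronger statement follows from the same uniform exponential tails you invoke; the paper obtains it from $\max_{1\leq k\leq n}V_{k}^{2}/(b_{n}\sqrt{n})\superexp 0$ together with \eqref{Ineg_MaxXn} and \eqref{Ineg_MaxEn}.
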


\begin{proof}
Theorem \ref{THM_PDMGAUSS_THETA}, Theorem \ref{THM_PDMGAUSS_RHO} and Theorem \ref{THM_PDMGAUSS_D} are proved in Section 4.
\end{proof}

\section{On moderate deviations under the Chen-Ledoux type condition}

Via an extensive use of Puhalskii's result, we will now focus our attention on the more general framework where the driven noise $(V_{n})$ is assumed to satisfy the Chen-Ledoux type condition. Accordingly, one shall introduce the following hypothesis, for $a = 2$ and $a = 4$.

\begin{enumerate}[\bf{({CL}.}1)]
\item \label{HYP_CL} Chen-Ledoux.
\begin{equation*}
\limsup_{n \rightarrow \infty} \frac{1}{b_{n}^2} \log n \dP\Big( \vert V_1 \vert^{a} > b_{n} \sqrt{n} \Big) = -\infty.
\end{equation*}
\item \label{HYP_E0}
\begin{equation*}
\frac{\vert \veps_0 \vert^{a}}{b_{n} \sqrt{n}} \superexp 0.
\end{equation*}
\item \label{HYP_X0}
\begin{equation*}
\frac{\vert X_0 \vert^{a}}{b_{n} \sqrt{n}} \superexp 0.
\end{equation*}
\end{enumerate}

\begin{rem}
If the random variable $V_1$ satisfies \CL{HYP_CL} with $a = 2$, then
\begin{equation}
\label{Cvg_Chen_Ledoux}
\limsup_{n \rightarrow \infty} \frac{1}{b_{n}^2} \log n \dP\Big( \left\vert V_1^2 - \dE[V_1^2] \right\vert > b_{n} \sqrt{n} \Big) = -\infty,
\end{equation}
which implies in particular that $\textnormal{Var}(V_1^4) < \infty$. Moreover, if the random variable $V_1$ has exponential moments, i.e. if there exists $t > 0$ such that
\begin{equation*}
\dE\Big[ \exp{(t V_1^2)} \Big] < \infty,
\end{equation*}
then \CL{HYP_CL} is satisfied for every increasing sequence $(b_{n})$. From \cite{Arcones01}, \cite{EichelsbacherLowe03}, condition \eqref{Cvg_Chen_Ledoux} is equivalent to say that the sequence
\begin{equation*}
\left( \frac{1}{b_{n} \sqrt{n}} \sum_{k=1}^{n} \Big( V_{k}^2 - \dE[V_{k}^2] \Big) \right)_{n \geq 1}
\end{equation*}
satisfies an LDP on $\dR$ with speed $b_{n}^2$ and good rate function
\begin{equation*}
I(x) = \frac{x^2}{2 \textnormal{Var}(V_1^2)}.
\end{equation*}
\end{rem}

\begin{rem} If we choose $b_n=n^{\alpha}$ with $0<\alpha<1/2$, \CL{HYP_CL} is immediately satisfied if there exists $t > 0$ and $0 < \beta < 1$ such that
\begin{equation*}
\dE\Big[ \exp{(t V_1^{2 \beta})} \Big] < \infty,
\end{equation*}
which is clearly a weaker assumption than the existence of $t > 0$ such that
\begin{equation*}
\dE\Big[ \exp{(t V_1^2)} \Big] < \infty,
\end{equation*}
imposed in the previous section.
\end{rem}

\begin{rem}
If \CL{HYP_CL} is satisfied for $a=4$, then it is also satisfied for all $0<b<a$.
\end{rem}

\begin{rem}
In the technical proofs that will follow, rather than \CL{HYP_CL} with $a=4$, the weakest assumption really needed could be summarized by the existence of a
large constant $C$ such that
\begin{equation*}
\limsup_{n \rightarrow \infty} \frac{1}{b_n^2}\log\dP\left(\frac{1}{n} \sum_{k=1}^{n} V_{k}^4 > C \right)=-\infty.
\end{equation*}
\end{rem}

\begin{thm}
\label{THM_PDMCL_THETA}
Assume that \CL{HYP_CL}, \CL{HYP_E0} and \CL{HYP_X0} are satisfied. Then, the sequence
\begin{equation*}
\left( \frac{\sqrt{n}}{b_{n}} \left( \tn - \theta^{*} \right) \right)_{n \geq 1}
\end{equation*}
satisfies the LDP on $\dR$ given in Theorem \ref{THM_PDMGAUSS_THETA}.
\end{thm}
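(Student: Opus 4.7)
The plan is to mimic the architecture of the Gaussian proof of Theorem~\ref{THM_PDMGAUSS_THETA} but to replace its Gaussian-specific Laplace controls by Puhalskii's MDP for martingale arrays \cite{Puhalskii97}, combined with Djellout's MDP for empirical quadratic functionals \cite{Djellout02}, in the spirit of Worms \cite{Worms00,Worms01a,Worms01b} for moderate deviations of least squares estimators in stable autoregressions. The working decomposition starts from
\begin{equation*}
\sqrt n\,(\tn-\theta^*) = \Bigl(\frac{n}{S_n}\Bigr)\,\frac{1}{\sqrt n}\sum_{k=1}^n X_{k-1}(X_k-\theta^* X_{k-1}), \qquad S_n=\sum_{k=1}^n X_{k-1}^{\,2},
\end{equation*}
where, using the companion AR(2) representation $X_k=(\theta+\rho)X_{k-1}-\theta\rho X_{k-2}+V_k$ and the identity $\theta+\rho-\theta^*=\theta\rho\theta^*$, the summand splits as
\begin{equation*}
X_{k-1}(X_k-\theta^* X_{k-1})= V_kX_{k-1}+\theta\rho\bigl(\theta^* X_{k-1}^{\,2}-X_{k-1}X_{k-2}\bigr),
\end{equation*}
an $\cF_k$-martingale increment with $\cF_k=\sigma(V_1,\ldots,V_k,X_0,\veps_0)$ plus a stationary quadratic functional that is centered at the origin in the limit.

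The first task is to upgrade the known almost sure convergences of the empirical moments of $(X_k)$ into the superexponential convergences
\begin{equation*}
\frac{S_n}{n}\superexp \ell,\qquad \frac{1}{n}\sum_{k=1}^n X_{k-1}X_{k-2}\superexp \theta^*\ell,\qquad \frac{\langle M\rangle_n}{n}=\frac{\sigma^2 S_n}{n}\superexp \sigma^2\ell,
\end{equation*}
where $\ell$ is the stationary second moment of $X_k$, as well as the joint MDP at speed $b_n^{\,2}$ of their $\sqrt n/b_n$-normalized centered fluctuations. The stability of the companion matrix (whose eigenvalues are $\theta$ and $\rho$) unrolls $X_{k-1}$ as a geometrically decaying convolution of $(V_j)_{j\leq k-1}$ plus a vanishing initial-value contribution, so each empirical functional above reduces to a quadratic or bilinear functional of the i.i.d.\ sequence $(V_k)$. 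A Djellout-type truncation at the scale $b_n\sqrt n$, together with \CL{HYP_CL} applied for $a=2$ and $a=4$, delivers the required superexponential concentration, while \CL{HYP_E0} and \CL{HYP_X0} absorb the boundary terms coming from $X_0$ and $\veps_0$.

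The second step applies Puhalskii's MDP \cite{Puhalskii97} to the martingale $M_n=\sum V_kX_{k-1}$, whose predictable bracket is controlled by Step~1; the exponential Lindeberg condition
\begin{equation*}
\frac{1}{b_n\sqrt n}\max_{1\leq k\leq n}|V_kX_{k-1}|\superexp 0
\end{equation*}
follows from \CL{HYP_CL} with $a=2$ together with the superexponential smallness of $\max_k|X_{k-1}|/\sqrt n$, itself a by-product of the unrolling of Step~1. A joint MDP at speed $b_n^{\,2}$ for $M_n/(b_n\sqrt n)$ and for the centered quadratic part $\sum_{k=1}^n(\theta^*X_{k-1}^{\,2}-X_{k-1}X_{k-2})/(b_n\sqrt n)$, followed by the contraction principle applied to the continuous map $(m,q,s)\mapsto (m+\theta\rho q)/s$ at $s=\ell>0$ and by the exponential equivalence principle of Definition~4.2.10 in \cite{DemboZeitouni98} to drop the superexponentially negligible residuals, yields the MDP for $\sqrt n(\tn-\theta^*)/b_n$. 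An explicit algebraic simplification using the formula for $\ell$ together with \eqref{SigT} recovers the announced rate $I_\theta(x)=x^2/(2\sigma^2_\theta)$. The main obstacle is genuinely the first step: superexponentially controlling empirical quadratic functionals of $(X_k)$ at speed $b_n^{\,2}$ in the absence of any exponential moment on $V_1$. This is precisely what the Chen-Ledoux hypothesis is designed to accomplish, and where essentially all the technical weight of the argument resides.
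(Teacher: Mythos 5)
Your overall architecture (superexponential convergence of the empirical second moments, a martingale MDP, contraction, negligible remainders) matches the paper's, but two of your steps, as stated, are genuine gaps. First, your decomposition leaves you with the centered quadratic functional $\sum_{k}(\theta^* X_{k-1}^2 - X_{k-1}X_{k-2})$, normalized by $b_n\sqrt{n}$, for which you propose to prove a joint MDP with $M_n$. That is a hard, unaddressed problem --- MDPs for quadratic forms of the chain at the scale $b_n\sqrt n$ under a Chen--Ledoux condition are not off-the-shelf --- and it is also unnecessary: summing the same AR(2) identity you used shows that $\theta\rho\sum_{k}(\theta^* X_{k-1}^2 - X_{k-1}X_{k-2}) = -\tfrac{\theta\rho}{1+\theta\rho}\,M_n$ plus isolated boundary terms of the form $\theta\rho X_nX_{n-1}+\rho X_0(\veps_0-X_0)$. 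In other words the quadratic part collapses into a deterministic multiple of $M_n$, which is exactly the content of the paper's decomposition \eqref{Decomp_Theta} with remainder $R_n(\theta)$; your joint-MDP step must either be replaced by this observation or actually proved, and you give no proof. (With the collapse, your contraction map $(m,q,s)\mapsto(m+\theta\rho q)/s$ does reduce to the paper's $m\mapsto m/(\ell(1+\theta\rho))$, so the rate function comes out right.)

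Second, you apply Puhalskii's theorem directly to $M_n=\sum_k X_{k-1}V_k$. Puhalskii's result (Theorem \ref{THM_PUHALSKII_CL}) requires the increments to be almost surely bounded by $c\sqrt{n}/b_n$, hypothesis \eqref{H2_Puhalskii}, which fails for $X_{k-1}V_k$ when $V_1$ has no exponential moments --- precisely the regime the Chen--Ledoux condition is designed to cover. The paper's fix is the double truncation $M_n^{(r,R)}=\sum_k X_{k-1}^{(r)}V_k^{(R)}$ of \eqref{Mtrunc} together with the proof \eqref{cont} that it is an exponentially good approximation of $M_n$: the $X$-truncation error $L_n^{(r)}$ requires a H\"older estimate and \CL{HYP_CL} with $a=2+\eta$, $\eta>1$, while the $V$-truncation error $F_n^{(r,R)}$ requires Djellout's martingale MDP with $Q_R\to0$. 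None of this machinery appears in your sketch. Relatedly, the condition you verify, namely that $\max_k|V_kX_{k-1}|/(b_n\sqrt n)$ is superexponentially small, is not the exponential Lindeberg condition \eqref{H3_Puhalskii}; the paper needs a separate and nontrivial argument (Lemma \ref{LEM_LINDEXP_CL}, via the moderate deviation upper bound for empirical measures of the geometrically ergodic chain) to control $\tfrac1n\sum_k X_k^2\,\mathrm{I}_{\{|X_k|>R\}}$, and that argument is the correct substitute for your claim.
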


\begin{thm}
\label{THM_PDMCL_RHO}
Assume that \CL{HYP_CL}, \CL{HYP_E0} and \CL{HYP_X0} are satisfied. Then, as soon as $\theta \neq -\rho$, the sequence
\begin{equation*}
\left( \frac{\sqrt{n}}{b_{n}} \begin{pmatrix}
\tn - \theta^{*} \\
\rn - \rho^{*}
\end{pmatrix} \right)_{n \geq 1}
\end{equation*}
satisfies the LDP on $\dR^2$ given in Theorem \ref{THM_PDMGAUSS_RHO}. In particular, the sequence
\begin{equation*}
\left( \frac{\sqrt{n}}{b_{n}} \Big( \rn - \rho^{*} \Big) \right)_{n \geq 1}
\end{equation*}
satisfies the LDP on $\dR$ also given in Theorem \ref{THM_PDMGAUSS_RHO}.
\end{thm}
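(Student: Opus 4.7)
The plan is to establish the bivariate MDP first; the marginal LDP for $(\sqrt{n}/b_n)(\rn - \rho^{*})$ then follows by the contraction principle via projection onto the second coordinate. The strategy mirrors the proof of Theorem~\ref{THM_PDMGAUSS_RHO}, replacing the Gaussian-specific arguments by Puhalskii's MDP for martingales \cite{Puhalskii97,Djellout02}, whose bracket and Lindeberg conditions are verified through the Chen-Ledoux hypothesis \CL{HYP_CL}.

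Concretely, I would start from the same decomposition used in the Gaussian case. Expanding $\ek$ from \eqref{Est_Res} and $\rn$ from \eqref{Est_Rho} around the unobserved noise $\veps_k$, and using $\veps_k = \rho \veps_{k-1} + V_k$, one can write
\begin{equation*}
\sqrt{n}\begin{pmatrix}\tn-\theta^{*}\\\rn-\rho^{*}\end{pmatrix} = A_n\,\frac{M_n}{\sqrt{n}} + R_n,
\end{equation*}
where $(M_n)$ is an $\dR^2$-valued martingale with differences of the form $\Delta M_k = V_k\,\Phi_{k-1}$, $\Phi_{k-1}$ being an $\cF_{k-1}$-measurable vector depending linearly on $X_{k-1}$ and $\veps_{k-1}$; where $A_n$ is a $2\times 2$ random matrix converging a.s.\ to a deterministic invertible matrix $A$ satisfying $A L A^{\prime} = \Gamma$ for some positive semi-definite $L$; and where $R_n$ gathers the quadratic remainders in $\tn-\theta^{*}$ and $\rn-\rho^{*}$ together with the contributions of $X_0$ and $\veps_0$.

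The next step is to apply Puhalskii's theorem to $M_n/(\sqrt{n}\,b_n)$, which requires (i) $\tfrac{1}{n}\langle M\rangle_n \superexp L$ and (ii) the exponential Lindeberg condition
\begin{equation*}
\frac{1}{n}\sum_{k=1}^{n}\dE\!\left[\|\Delta M_k\|^2\,\mathbf{1}_{\{\|\Delta M_k\|>\delta\sqrt{n}/b_n\}}\,\big|\,\cF_{k-1}\right]\superexp 0, \qquad \delta>0.
\end{equation*}
For (i), the brackets reduce to empirical averages of $V_k^2$ against quadratic functions of $(X_{k-1},\veps_{k-1})$: Chen-Ledoux for $a=2$ combined with \cite{Arcones01,EichelsbacherLowe03} yields $\tfrac{1}{n}\sum(V_k^2-\sigma^2)\superexp 0$, while Worms' MDP \cite{Worms00,Worms01a,Worms01b} applied to the stable bivariate chain $(X_n,\veps_n)$ controls the superexponential convergence of $\tfrac{1}{n}\sum X_{k-1}^2$, $\tfrac{1}{n}\sum \veps_{k-1}^2$ and their cross-term. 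For (ii), the bound $\|\Delta M_k\|\leq C|V_k|(|X_{k-1}|+|\veps_{k-1}|)$ followed by truncation at level $\sqrt{n}/b_n$ reduces the condition to superexponential smallness of $\max_{k\leq n}|V_k|^2$ via \CL{HYP_CL}, and of $\max_{k\leq n}(X_{k-1}^2+\veps_{k-1}^2)$ via \CL{HYP_E0}--\CL{HYP_X0} together with the geometric ergodicity of the chain.

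The main obstacle is the superexponential control $R_n/b_n \superexp 0$, since $R_n$ is quadratic in $\tn-\theta^{*}$, directly through the Taylor expansion of $\rn$ and indirectly through the residuals $\ek$. The natural bootstrap is to invoke Theorem~\ref{THM_PDMCL_THETA} first, so that $(\sqrt{n}/b_n)(\tn-\theta^{*})$ is $(b_n^2)$-exponentially tight and hence $(\sqrt{n}/b_n)(\tn-\theta^{*})^2 \superexp 0$; a parallel argument handles the replacement of the denominator $\sum_{k=1}^n\eek^{\,2}$ in \eqref{Est_Rho} by $\sum_{k=1}^n \veps_{k-1}^{\,2}$, and this is precisely where \CL{HYP_CL} with $a=4$ is used (as flagged in the remark preceding the theorem), to dominate cross-terms of the form $X_{k-1}^2 V_k^2$. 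With (i), (ii) and $R_n/b_n \superexp 0$ in hand, Puhalskii's theorem gives the MDP for $M_n/(\sqrt{n}\,b_n)$ with good rate function $\tfrac12 x^{\prime} L^{-1} x$; combining $A_n \superexp A$ with the exponential tightness of $M_n/(\sqrt{n}\,b_n)$ provides exponential equivalence with $A\,M_n/(\sqrt{n}\,b_n)$, and the contraction principle applied to the linear map $y\mapsto Ay$ delivers the joint MDP with rate function $K$ from \eqref{Joint_Gauss_Rate}. The marginal statement follows from the projection $(x,y)\mapsto y$.
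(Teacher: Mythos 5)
Your overall architecture matches the paper's: the same linear decomposition into (random matrix)$\times$(vector martingale)$+$(negligible remainder), Puhalskii's martingale MDP for the martingale part, superexponential convergence of the random matrix, and the contraction principle (including the projection for the marginal statement). The bootstrap through Theorem \ref{THM_PDMCL_THETA} to absorb the dependence on $\tn$ is consistent with how the paper handles the matrix $A_n$ in \eqref{A}. However, there is a genuine gap at the central technical step: you propose to apply Puhalskii's theorem directly to the martingale with increments $V_k\Phi_{k-1}$, checking only the bracket condition and the exponential Lindeberg condition. Puhalskii's result as used here (Theorem \ref{THM_PUHALSKII_CL}) also requires the almost sure bound $\vert m_k^n\vert\leq c\sqrt{n}/b_n$ of \eqref{H2_Puhalskii}, which fails for the unbounded increments $X_{k-1}V_k$. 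The paper's entire Lemma \ref{LEM_PDM_MN_CL} (and its bivariate analogue Lemma \ref{LEM_PDM_MNVect_CL}) is devoted to repairing exactly this: one doubly truncates, $M_n^{(r,R)}=\sum_k X_{k-1}^{(r)}V_k^{(R)}$ with $X_k^{(r)}=X_k\mathrm{I}_{\{\vert X_k\vert\leq r\sqrt{n}/b_n\}}$ and $V_k^{(R)}$ truncated at a fixed level and recentered, applies Puhalskii to the truncated martingale, and then proves that it is an exponentially good approximation in the sense of Theorem 4.2.16 of \cite{DemboZeitouni98}. That approximation splits into two nontrivial estimates: the $X$-truncation error $L_n^{(r)}$ is controlled by H\"older's inequality and \CL{HYP_CL} with $a=2+\eta$, $\eta>1$ (this, not the denominator replacement you point to, is where the fourth-moment-level Chen--Ledoux hypothesis is really consumed), and the $V$-truncation error $F_n^{(r,R)}$ is itself a martingale treated with Djellout's theorem, whose rate $x^2/(2Q_R\ell)$ degenerates as $R\to\infty$. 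None of this appears in your proposal.

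A secondary concern is your verification of the exponential Lindeberg condition by reducing it to superexponential smallness of $\max_{k\leq n}V_k^2$ and $\max_{k\leq n}(X_{k-1}^2+\veps_{k-1}^2)$. The available control under \CL{HYP_E0}--\CL{HYP_X0} and \CL{HYP_CL} is at the scale $b_n\sqrt{n}$, whereas forcing the indicator $\mathrm{I}_{\{\vert X_k\vert>r\sqrt{n}/b_n\}}$ to vanish requires control at the scale $n/b_n^2$; since $b_n\sqrt{n}=o(n/b_n^2)$ only when $b_n=o(n^{1/6})$, this route does not cover the full admissible range of $(b_n)$. The paper instead proves Lemma \ref{LEM_LINDEXP_CL} by a genuinely different argument, namely the upper bound of the moderate deviation principle for the empirical measure $\Lambda_n$ of the geometrically ergodic chain $(X_n)$ together with a truncation of $f(x)=x^2$; you would need to either import that argument or supply an alternative that works at the correct scale.
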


\begin{rem}
We have already seen in Remark \ref{REM_INV_GAMMA} that the covariance matrix $\Gamma$ is invertible if and only if $\theta\neq - \rho$. In the particular case where $\theta= -\rho$, the sequences
\begin{equation*}
\left( \frac{\sqrt{n}}{b_{n}} \left( \tn - \theta^{*} \right) \right)_{n \geq 1} \hspace{1cm} \text{and} \hspace{1cm} \left( \frac{\sqrt{n}}{b_{n}} \Big( \rn - \rho^{*} \Big) \right)_{n \geq 1}
\end{equation*}
satisfy the LDP on $\dR$ given in Remark \ref{REM_INV_GAMMA}.
\end{rem}

\begin{thm}
\label{THM_PDMCL_D}
Assume that \CL{HYP_CL}, \CL{HYP_E0} and \CL{HYP_X0} are satisfied. Then, the sequence
\begin{equation*}
\left( \frac{\sqrt{n}}{b_{n}} \left( \dn - D^{*} \right) \right)_{n \geq 1}
\end{equation*}
satisfies the LDP on $\dR$ given in Theorem \ref{THM_PDMGAUSS_D}.
\end{thm}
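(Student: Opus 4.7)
The plan is to reduce the LDP for $\dn$ to the one already proved for $\rn$ in Theorem \ref{THM_PDMCL_RHO}, exploiting the fact that, up to endpoint terms, the Durbin-Watson statistic is an affine function of the serial correlation estimator. Expanding the numerator of \eqref{Est_D} and using the definition \eqref{Est_Rho} of $\rn$, a straightforward rearrangement yields the algebraic identity
\begin{equation*}
\dn = 2\bigl(1 - \rn\bigr) + R_{n}, \qquad R_{n} = \frac{(2\rn - 1)\, \en^{\ 2} - \wh{\veps}_{0}^{\ 2}}{\sum_{k=0}^{n} \ek^{\ 2}}.
\end{equation*}
Since $D^{*} = 2(1-\rho^{*})$, this entails
\begin{equation*}
\frac{\sqrt{n}}{b_{n}}\bigl( \dn - D^{*} \bigr) = -2 \cdot \frac{\sqrt{n}}{b_{n}}\bigl( \rn - \rho^{*} \bigr) + \frac{\sqrt{n}}{b_{n}}\, R_{n},
\end{equation*}
so it will be enough to verify that the remainder $\frac{\sqrt{n}}{b_n} R_n$ is $(b_n^2)$-superexponentially negligible.

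To control the denominator of $R_n$, I would use that $\frac{1}{n}\sum_{k=0}^{n}\ek^{\ 2}$ is superexponentially close to the positive constant $\sigma^{2}/(1-\rho^{2})$ at speed $b_n^2$, an estimate of the same type as those already produced in proving Theorem \ref{THM_PDMCL_RHO}. Combined with the fact that $\rn \superexp \rho^{*}$, so that $2\rn - 1$ is uniformly bounded outside a superexponentially negligible event, this reduces the problem to showing
\begin{equation*}
\frac{\wh{\veps}_{0}^{\ 2}}{b_{n}\sqrt{n}} \superexp 0 \qquad \text{and} \qquad \frac{\en^{\ 2}}{b_{n}\sqrt{n}} \superexp 0.
\end{equation*}
The first convergence is exactly \CL{HYP_E0} applied with $a=2$. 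For the second, I decompose $\en = X_{n} - \tn X_{n-1}$ and combine the superexponential localization of $\tn$ around $\theta^{*}$ furnished by Theorem \ref{THM_PDMCL_THETA} with a superexponential bound on $X_{n}^{2}/(b_{n}\sqrt{n})$. The latter follows by a recursive argument from the AR representation \eqref{AR} together with \CL{HYP_X0}, \CL{HYP_E0} and \CL{HYP_CL} with $a=2$, in the very same spirit as the tightness lemmas that underlie the proofs of the preceding theorems.

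Once the superexponential equivalence
\begin{equation*}
\frac{\sqrt{n}}{b_{n}}\bigl( \dn - D^{*} \bigr) \superexpequiv -2 \cdot \frac{\sqrt{n}}{b_{n}}\bigl( \rn - \rho^{*} \bigr)
\end{equation*}
is in hand, I conclude via Theorem 4.2.13 of \cite{DemboZeitouni98} combined with the contraction principle along the continuous map $x \mapsto -2x$ applied to the LDP of Theorem \ref{THM_PDMCL_RHO}. This yields an LDP for $\bigl( \frac{\sqrt{n}}{b_{n}}(\dn - D^{*}) \bigr)_{n \geq 1}$ at speed $b_{n}^{2}$ with good rate function
\begin{equation*}
I_{D}(y) = I_{\rho}\!\left( -\frac{y}{2} \right) = \frac{y^{2}}{8\,\sigma^{2}_{\rho}} = \frac{y^{2}}{2\,\sigma^{2}_{D}},
\end{equation*}
using $\sigma^{2}_{D} = 4\sigma^{2}_{\rho}$, which is precisely \eqref{D_Gauss_Rate}.

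The only genuinely delicate point is the superexponential negligibility of the terminal residual $\en$. Unlike $\wh{\veps}_{0}$, whose smallness is already encoded in the initial-condition hypotheses, $\en$ depends on the whole sample through $\tn$, and its control therefore requires combining the MDP for $\tn$ with a pathwise superexponential bound on the autoregressive trajectory $(X_n)$. Every other ingredient is either an application of the contraction principle or the same type of exponential-equivalence book-keeping already carried out for $\rn$ in Theorem \ref{THM_PDMCL_RHO}.
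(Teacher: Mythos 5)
Your argument is correct and follows essentially the same route as the paper, which invokes the decomposition $\frac{\sqrt{n}}{b_n}(\dn - D^{*}) = -2\frac{\sqrt{n}}{b_n}(1-f_n)(\rn-\rho^{*}) + \frac{\sqrt{n}}{b_n}\zeta_n$ with the explosion coefficient $f_n = \en^{\,2}/J_n$, shows that the remainder terms are $(b_n^2)$-superexponentially negligible under the Chen--Ledoux hypotheses, and concludes by exponential equivalence and the contraction principle applied to the LDP of Theorem \ref{THM_PDMCL_RHO}. The only slip is the limiting value of $n^{-1}\sum_{k=0}^{n}\ek^{\,2}$, which is $\ell(1-(\theta^{*})^2)=\sigma^2/(1-(\theta\rho)^2)$ rather than $\sigma^2/(1-\rho^2)$, but this is immaterial since only its strict positivity is used to control the denominator.
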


\begin{proof}
Theorem \ref{THM_PDMCL_THETA}, Theorem \ref{THM_PDMCL_RHO} and Theorem \ref{THM_PDMCL_D} are proved in Section 4.
\end{proof}

\section{Proof of the main results}
For a matter of readability, some notations commonly used in the following proofs have to be introduced. First, for all $n \geq 1$, let
\begin{equation}
\label{L} L_{n} = \sum_{k=1}^{n} V_{k}^2.
\end{equation}
Then, let us define $M_{n}$, for all $n \geq 1$, as
\begin{equation}
\label{M} M_{n} = \sum_{k=1}^{n} X_{k-1} V_{k}
\end{equation}
where $M_0 = 0$. For all $n \geq 1$, denote by $\cF_n$ the $\sigma$-algebra of the events occurring up to time $n$,  $\cF_n = \sigma(X_0,\veps_0,V_1,\cdots,V_n)$. We infer from $(\ref{M})$ that $(M_n)_{n\ge 0}$ is a locally square-integrable real martingale with respect to the filtration $\dF = (\cF_n)_{n \geq 0}$ with predictable quadratic variation given by $\langle M \rangle_0=0$ and for all $n \geq 1$, $\langle M \rangle_{n} = \sigma^2 S_{n-1}$, where
\begin{equation}
\label{S} S_{n} =\sum_{k=0}^{n} X_{k}^2.
\end{equation}
Moreover, $(N_n)_{n\ge 0}$ is defined, for all $n \geq 2$, as
\begin{equation}
\label{N} N_{n} =\sum_{k=2}^{n} X_{k-2} V_{k}
\end{equation}
and $N_0 = N_1 = 0$. It is not hard to see that $(N_n)_{n \geq 0}$ is also a locally square-integrable real martingale sharing the same properties than  $(M_n)_{n \geq 0}$. More precisely, its predictable quadratic variation is given by $\langle N \rangle_{n} = \sigma^2 S_{n-2}$. To conclude, let $P_0 = 0$ and, for all $n \geq 1$,
\begin{equation}
\label{P} P_{n} =\sum_{k=1}^{n} X_{k-1} X_{k}.
\end{equation}

\subsection*{}
\begin{center}
{\bf 4.1. Proof of Theorem \ref{THM_PDMGAUSS_THETA}.}
\end{center}
Before starting the proof of Theorem \ref{THM_PDMGAUSS_THETA}, we need to introduce some technical tools. Denote by $\ell$ the almost sure limit of $S_{n}/n$ \cite{BercuProia11}, given by
\begin{equation}
\label{LimS}
\ell = \frac{\sigma^2 (1 + \theta\rho)}{(1-\theta^2)(1 - \theta\rho)(1 - \rho^2)}.
\end{equation}

\begin{lem}
\label{LEM_CVGEXP_SN}
Under the assumptions of Theorem \ref{THM_PDMGAUSS_THETA}, we have the exponential convergence
\begin{equation}
\label{LIMEXP_SN}
\frac{S_{n}}{n} \superexp \ell
\end{equation}
where $\ell$ is given by \eqref{LimS}.
\end{lem}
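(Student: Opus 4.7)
The plan is to exploit the Gaussian assumption on the innovations to reduce $S_n$ to a quadratic form in independent Gaussians, on which the Cram\'er--Chernoff method delivers tail bounds decaying like $e^{-cn}$, which is much stronger than the required super-exponential convergence at the slower speed $b_n^2 = o(n)$. The first step is to iterate the recursion \eqref{AR} to obtain
\[
X_n \;=\; \theta^n X_0 \;+\; \veps_0 \sum_{k=1}^{n}\theta^{n-k}\rho^{k} \;+\; \sum_{j=1}^{n} c_{n-j}\, V_j, \qquad c_{m} \;=\; \sum_{i=0}^{m}\theta^{m-i}\rho^{i},
\]
so that $|c_m| \leq C \gamma^m$ for some $\gamma \in (\max(|\theta|,|\rho|),1)$. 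Writing $X_n = Y_n + Z_n$ with $Z_n$ the purely noise-driven term and $Y_n$ the initial-condition term, I would then split $S_n = T_n^{\mathrm{in}} + 2 R_n + T_n^{\mathrm{noi}}$ with $T_n^{\mathrm{in}} = \sum_{k=0}^{n} Y_k^2$, $R_n = \sum_{k=0}^{n} Y_k Z_k$, and $T_n^{\mathrm{noi}} = \sum_{k=0}^{n} Z_k^2$, and establish the super-exponential convergence on each piece.

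The initial-condition piece is handled uniformly in $n$ by the geometric summability of the coefficients: $T_n^{\mathrm{in}} \leq C(X_0^2+\veps_0^2)$ for every $n$, so hypotheses \G{HYP_G1} and \G{HYP_G2} together with the Chernoff inequality yield $\dP(T_n^{\mathrm{in}}/n > \delta) \leq e^{-cn}$ for some $c = c(\delta) > 0$, hence $T_n^{\mathrm{in}}/n \superexp 0$ since $n / b_n^2 \to \infty$. The cross term is then absorbed through Cauchy--Schwarz, $|R_n|/n \leq \sqrt{T_n^{\mathrm{in}}/n}\sqrt{T_n^{\mathrm{noi}}/n}$, provided $T_n^{\mathrm{noi}}/n$ stays super-exponentially bounded, which follows from the final step.

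The heart of the argument is then the super-exponential convergence $T_n^{\mathrm{noi}}/n \superexp \ell$. Under the Gaussian hypothesis on $(V_n)$, $T_n^{\mathrm{noi}}$ is the quadratic form $\sigma^2 W' A_n W$ in the standard Gaussian vector $W = (V_1,\ldots,V_n)/\sigma$, where $A_n[i,j] = \sum_{k = \max(i,j)}^{n} c_{k-i}\,c_{k-j}$ is symmetric nonnegative and approximates an infinite Toeplitz operator whose norm is bounded by $\big(\sum_{m \geq 0} |c_m|\big)^2 < \infty$. The explicit cumulant generating function
\[
\log \dE \exp\!\big(\lambda\, W' A_n W\big) \;=\; -\tfrac{1}{2}\log\det(I_n - 2\lambda A_n)
\]
is finite for $|\lambda|$ small uniformly in $n$, so Chernoff's inequality on both tails, together with the identification $\dE[T_n^{\mathrm{noi}}]/n \to \ell$ (which matches the almost sure limit already established in \cite{BercuProia11}), gives $\dP(|T_n^{\mathrm{noi}}/n - \ell| > \delta) \leq e^{-cn}$. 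Combining the three pieces closes the proof.

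The main obstacle is the uniform-in-$n$ control of $\|A_n\|$ and the identification of the Ces\`aro limit of $\dE[T_n^{\mathrm{noi}}]/n$ as $\ell$; both are resolved by the geometric decay of the impulse coefficients $(c_m)$ inherited from the stability condition $\gamma < 1$, so no delicate truncation is needed — this is precisely why the Gaussian case is the \emph{easy} one.
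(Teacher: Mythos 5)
Your argument is correct, but it takes a genuinely different route from the paper's. The paper never expands $X_n$ as an explicit moving average; it works instead from the algebraic identity
\[
\frac{S_n}{n}-\ell \;=\; \frac{\ell}{\sigma^2}\Big[\Big(\tfrac{L_n}{n}-\sigma^2\Big)+2\theta^{*}\tfrac{M_n}{n}-2\theta\rho\tfrac{N_n}{n}+\tfrac{R_n}{n}\Big],
\]
where $L_n=\sum_{k=1}^nV_k^2$ and $M_n=\sum X_{k-1}V_k$, $N_n=\sum X_{k-2}V_k$ are martingales. The term $L_n/n$ is treated by Cram\'er--Chernoff exactly as in your noise block, the isolated terms by Markov's inequality under \G{HYP_G1}--\G{HYP_G2} together with Proposition 4.1 of Worms for $X_n^2/n$, and the martingale terms by the Gaussian-martingale exponential inequality of Bercu and Touati combined with the deterministic bound $S_n\le\alpha X_0^2+\beta\veps_0^2+\beta L_n$. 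You replace all of the martingale machinery by the closed-form Laplace transform $-\tfrac12\log\det(I_n-2\lambda A_n)$ of the Gaussian quadratic form, with the uniform control $\Vert A_n\Vert\le\big(\sum_{m\ge0}\vert c_m\vert\big)^2$ inherited from the geometric decay of the impulse coefficients; this is more elementary and self-contained in the Gaussian case, and like the paper's treatment of $L_n/n$ it actually yields decay at the full speed $n$ rather than merely $b_n^2$. What the paper's decomposition buys is reusability: the exponential negligibility of $M_n/n$ and $N_n/n$ and the decomposition itself are precisely the ingredients recycled in Corollary \ref{COR_CVGEXP_PN}, Corollary \ref{COR_CVGEXP_QN}, Lemma \ref{LEM_CVGEXP_AN} and, crucially, in the Chen--Ledoux section, where no Gaussian quadratic-form representation is available and only the martingale structure survives; your argument is confined to the Gaussian setting. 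One point worth making explicit if you write this up: the identification $\sigma^2\sum_{m\ge0}c_m^2=\ell$ can be verified directly, since $c_m=(\theta^{m+1}-\rho^{m+1})/(\theta-\rho)$ gives $\sum_{m\ge0}c_m^2=(1+\theta\rho)/\big((1-\theta^2)(1-\theta\rho)(1-\rho^2)\big)$, so no appeal to the almost sure limit of the cited work is needed.
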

\begin{proof}
After straightforward calculations, we get that for all $n\geq 2$,
\begin{equation}
\label{Decomp_Sn}
\frac{S_n}{n} - \ell = \frac{\ell}{\sigma^2} \Bigg[ \left(\frac{L_n}{n} - \sigma^2 \right) + 2 \theta^{*} \frac{M_n}{n} - 2\theta\rho\frac{N_n}{n} + \frac{R_n}{n} \Bigg]
\end{equation}
where $L_n$, $M_n$, $S_n$ and $N_n$ are respectively given by \eqref{L},
\eqref{M}, \eqref{S} and \eqref{N},
\begin{equation*}
R_n = [2(\theta + \rho) \rho^{*} - (\theta+\rho)^2 - (\theta\rho)^2]X_n^2 - (\theta\rho)^2 X_{n-1}^2 + 2 \rho^{*} X_n X_{n-1} + \xi_1,
\end{equation*}
and where the remainder term
\begin{equation*}
\xi_1 = (1-2\theta\rho-\rho^2) X_0^2 + \rho^2\veps_0^2 + 2\theta\rho
X_0\veps_0 - 2\rho \rho^{*} (\veps_0-X_0)X_0 + 2\rho(\veps_0-X_0)V_1.
\end{equation*}

\medskip

First of all, $(V_{n})$ is a sequence of independent and identically distributed gaussian random variables with zero mean and variance $\sigma^2 > 0$. It immediately follows from Cram\'er-Chernoff's Theorem, expounded e.g. in \cite{DemboZeitouni98}, that for all $\delta > 0$,
\begin{equation}
\label{Pgd_Ln_Gauss}
\limsup_{n \rightarrow \infty} \frac{1}{n} \log \dP\left( \left\vert \frac{L_{n}}{n} - \sigma^2 \right\vert > \delta \right) < 0.
\end{equation}
Since $b_{n}^2 = o(n)$, the latter convergence leads to
\begin{equation}
\label{Pdm_Ln_Gauss}
 \frac{L_{n}}{n} \superexp \sigma^2,
\end{equation}
ensuring the exponential convergence of $L_{n}/n$ to $\sigma^2$ with speed $b_{n}^2$. Moreover, for all $\delta > 0$ and a suitable $t > 0$, we clearly obtain from Markov's inequality that
\begin{equation*}
\dP\left( \frac{X_0^2}{n} > \delta \right) \leq \exp\left( -t n \delta \right) \dE \Big[ \exp(t X_0^2) \Big],
\end{equation*}
which immediately implies via \G{HYP_G2},
\begin{equation}
\label{Pdm_X0_Gauss}
 \frac{X_0^2}{n} \superexp 0,
\end{equation}
and we get the exponential convergence of $X_0^2/n$ to 0 with speed $b_{n}^2$. The same is true for $V_1^2/n$, $\veps_0^2/n$ and more generally for any isolated term of order 2 in relation \eqref{Decomp_Sn} whose numerator do not depend on $n$. Let us now focus our attention on $X_{n}^2/n$. The model \eqref{AR} can be rewritten in the vectorial form,
\begin{equation}
\label{Mod_Vect}
\Phi_{n} = A \Phi_{n-1} + W_{n}
\end{equation}
where $\Phi_{n} = \begin{pmatrix} X_{n} & X_{n-1} \end{pmatrix}^{\prime}$ stands for the lag vector of order 2, $W_{n} = \begin{pmatrix} V_{n} & 0 \end{pmatrix}^{\prime}$ and
\begin{equation}
\label{Mod_Vect_A}
A = \begin{pmatrix}
\theta+\rho & -\theta\rho \\
1 & 0
\end{pmatrix}.
\end{equation}
It is easy to show that $\rho(A) = \text{max}(\vert \theta \vert, \vert \rho \vert) < 1$ under the stability conditions. According to Proposition 4.1 of \cite{Worms00},
\begin{equation*}
\frac{\Vert \Phi_{n} \Vert^2}{n} \superexpn 0,
\end{equation*}
which is clearly sufficient to deduce that
\begin{equation}
\label{Pdm_Xn_Gauss}
 \frac{X_n^2}{n} \superexp 0.
\end{equation}
The exponential convergence of $R_{n}/n$ to 0 with speed $b_{n}^2$ is achieved following exactly the same lines. To conclude the proof of Lemma \ref{LEM_CVGEXP_SN}, it remains to study the exponential asymptotic behavior of $M_{n}/n$. For all $\delta > 0$ and a suitable $y > 0$,
\begin{eqnarray}
\label{Ineg_GaussMart}
\dP\left( \frac{M_{n}}{n} > \delta \right) & = & \dP\left( \frac{M_{n}}{n} > \delta, \langle M \rangle_{n} \leq y \right) + \dP\left( \frac{M_{n}}{n} > \delta, \langle M \rangle_{n} > y \right), \nonumber \\
 & \leq & \exp\left( -\frac{n^2 \delta^2}{2 y} \right) + \dP\Big( \langle M \rangle_{n} > y \Big),
\end{eqnarray}
by application of Theorem 4.1 of \cite{BercuTouati08} in the case of a gaussian martingale. Then, noting that we have the following inequality,
\begin{equation}
\label{Ineg_Sn}
S_{n} \leq \alpha X_0^2 + \beta \veps_0^2 + \beta L_{n} \hspace{0.5cm} \textnormal{a.s.}
\end{equation}
with $\alpha = 1 + \left( 1 - \vert \theta \vert \right)^{-2}$ and $\beta = \left( 1 - \vert \rho \vert \right)^{-2} \left( 1 - \vert \theta \vert \right)^{-2}$, we get for a suitable $t >0$,
\begin{eqnarray*}
\dP\Big( \langle M \rangle_{n} > y \Big) & \leq & \dP\bigg( X_0^2 > \frac{y}{3 \alpha \sigma^2} \bigg) + \dP\left( \veps_0^2 > \frac{y}{3 \beta \sigma^2} \right) + \dP\left( L_{n-1} > \frac{y}{3 \beta \sigma^2} \right), \\
 & \leq & \exp\left( \frac{- y t}{3 \alpha \sigma^2} \right) \dE \Big[ \exp(t X_0^2) \Big] + \exp\left( \frac{- y t}{3 \beta \sigma^2} \right) \dE \Big[ \exp(t \veps_0^2) \Big] \\
 & & \hspace{1cm} + \hspace{0.1cm} \dP\left( L_{n-1} > \frac{y}{3 \beta \sigma^2} \right), \\
 & \leq & 3 \max\bigg( \exp\left( \frac{- y t}{3 \alpha \sigma^2} \right) \dE \Big[ \exp(t X_0^2) \Big], \exp\left( \frac{- y t}{3 \beta \sigma^2} \right) \dE \Big[ \exp(t \veps_0^2) \Big], \\
& & \hspace{1cm} \dP\left( L_{n-1} > \frac{y}{3 \beta \sigma^2} \right) \bigg).
\end{eqnarray*}
Let us choose $y = n x$, assuming $x > 3 \beta \sigma^4$. It follows that
\begin{eqnarray*}
\frac{1}{b_{n}^2} \log \dP\Big( \langle M \rangle_{n} > n x \Big) & \leq & \frac{\log 3}{b_{n}^2} +  \frac{1}{b_{n}^2} \max\bigg( \frac{-n x t}{3 \alpha \sigma^2} + \log \dE \Big[ \exp(t X_0^2) \Big], \\
 & & \hspace{1cm} \frac{-n x t}{3 \beta \sigma^2} + \log \dE \Big[ \exp(t \veps_0^2) \Big], \log \dP\left( L_{n-1} > \frac{n x}{3 \beta \sigma^2} \right) \bigg).
\end{eqnarray*}
Since $b_{n}^2 = o(n)$ and by virtue of \eqref{Pdm_Ln_Gauss} with $\delta = x / (3 \beta \sigma^2) - \sigma^2 > 0$, we obtain that
\begin{equation}
\label{Cvg_Exp_Proc_Gauss}
\limsup_{n \rightarrow \infty} \frac{1}{b_{n}^2} \log \dP\Big( \langle M \rangle_{n} > n x \Big) = -\infty.
\end{equation}
It enables us by \eqref{Ineg_GaussMart} to deduce that for all $\delta > 0$,
\begin{equation}
\label{Pdm_Mn_Gauss}
\limsup_{n \rightarrow \infty} \frac{1}{b_{n}^2} \log \dP\left( \frac{M_{n}}{n} > \delta \right) = -\infty.
\end{equation}
The same result is also true replacing $M_{n}$ by $-M_{n}$ in \eqref{Pdm_Mn_Gauss} since $M_{n}$ and $-M_{n}$ share the same distribution. Therefore, we find that
\begin{equation}
\label{LIMEXP_MN}
\frac{M_{n}}{n} \superexp 0.
\end{equation}
A similar reasoning leads to the exponential convergence of $N_{n}/n$ to 0, with speed $b_{n}^2$. Finally, we obtain \eqref{LIMEXP_SN} from \eqref{Decomp_Sn}  together with \eqref{Pdm_Ln_Gauss}, \eqref{Pdm_X0_Gauss}, \eqref{Pdm_Xn_Gauss} and \eqref{LIMEXP_MN} which achieves the proof of Lemma \ref{LEM_CVGEXP_SN}.
\end{proof}

\begin{cor}
\label{COR_CVGEXP_PN}
By virtue of Lemma \ref{LEM_CVGEXP_SN} and under the same assumptions, we have the exponential convergence
\begin{equation}
\label{LIMEXP_PN}
\frac{P_{n}}{n} \superexp \ell_1
\end{equation}
where $\ell_1 = \theta^{*} \ell$.
\end{cor}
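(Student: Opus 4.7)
The plan is to derive a linear identity relating $P_n$ to $S_{n-1}$, the martingale $M_n$, and boundary terms that individually converge $(b_n^2)$-superexponentially, and then to assemble the pieces using Lemma \ref{LEM_CVGEXP_SN} together with the auxiliary convergences $M_n/n \superexp 0$, $X_n^2/n \superexp 0$ and $X_0^2/n \superexp 0$ already produced inside its proof. Since the almost sure limit $\theta^{*}\ell$ is already identified in \cite{BercuProia11}, no new LDP-type input beyond what the preceding lemma supplies should be needed.

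First I would carry out the algebra. Writing $X_k = \theta X_{k-1} + \veps_k$ gives $P_n = \theta S_{n-1} + \sum_{k=1}^n X_{k-1}\veps_k$. For $k \geq 2$, combining $\veps_k = \rho\veps_{k-1} + V_k$ with $\veps_{k-1} = X_{k-1} - \theta X_{k-2}$ yields $X_{k-1}\veps_k = \rho X_{k-1}^2 - \rho\theta X_{k-1}X_{k-2} + X_{k-1} V_k$. Summing from $k=2$ to $n$, treating the $k=1$ term by means of $\veps_1 = \rho\veps_0 + V_1$, and substituting $P_{n-1} = P_n - X_{n-1}X_n$, should produce an identity of the form
\begin{equation*}
(1+\theta\rho)\,P_n = (\theta+\rho)\,S_{n-1} + M_n + \xi_2,
\end{equation*}
where $\xi_2$ bundles the terms $\rho\theta X_{n-1}X_n$, $\rho X_0 \veps_0$ and $-\rho X_0^2$, in the spirit of the remainder $\xi_1$ appearing in \eqref{Decomp_Sn}.

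Next I would divide by $n(1+\theta\rho)$ and verify that each piece converges superexponentially. Lemma \ref{LEM_CVGEXP_SN} gives $S_{n-1}/n \superexp \ell$, since $|S_n - S_{n-1}|/n = X_n^2/n \superexp 0$ by \eqref{Pdm_Xn_Gauss}; its proof also delivers $M_n/n \superexp 0$ via \eqref{LIMEXP_MN} and $X_0^2/n, \veps_0^2/n \superexp 0$ by Markov's inequality under \G{HYP_G1} and \G{HYP_G2}, while $|X_{n-1}X_n|/n \leq (X_{n-1}^2 + X_n^2)/(2n) \superexp 0$. Together these give $\xi_2/n \superexp 0$, hence $P_n/n \superexp \bigl((\theta+\rho)/(1+\theta\rho)\bigr)\ell = \theta^{*}\ell = \ell_1$. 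The step I see as potentially fiddly is merely the bookkeeping of boundary contributions from the recursion $P_n - P_{n-1} = X_{n-1}X_n$ and from the initial values $X_0, \veps_0, V_1$, but each of these is a fixed random variable divided by $n$ and is therefore superexponentially negligible at speed $b_n^2$ under \G{HYP_G1} and \G{HYP_G2}, so no genuine obstacle should arise.
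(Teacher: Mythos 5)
Your proposal is correct and follows essentially the same route as the paper: the identity you derive, $(1+\theta\rho)P_n=(\theta+\rho)S_{n-1}+M_n+\xi_2$ with $\xi_2=\theta\rho X_nX_{n-1}+\rho X_0(\veps_0-X_0)$, is exactly the decomposition \eqref{Decomp_Pn} used in the paper (there written as a bound on $\vert P_n/n-\theta^{*}S_n/n\vert$ with remainder $R_n(\theta)$ equal to your $\xi_2$), and the concluding convergences are assembled from the same ingredients $S_n/n\superexp\ell$, $M_n/n\superexp 0$ and the superexponential negligibility of the isolated terms.
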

\begin{proof}
The proof of Corollary \ref{COR_CVGEXP_PN} is immediately derived from the following inequality,
\begin{eqnarray}
\label{Decomp_Pn}
\left\vert \frac{P_n}{n} - \theta^*\frac{S_n}{n} \right\vert & = & \left\vert \frac{1}{1+\theta\rho}\frac{M_n}{n} + \frac{1}{1+\theta\rho}\frac{R_{n}(\theta)}{n} - \theta^*\frac{X_n^2}{n}  \right\vert, \nonumber\\
& \leq & \frac{1}{1+\theta\rho}\frac{\vert M_n \vert}{n} + \frac{1}{1+\theta\rho} \frac{\vert R_{n}(\theta) \vert}{n} + \vert \theta^* \vert \frac{X_n^2}{n}
\end{eqnarray}
with $R_{n}(\theta) = \theta\rho X_{n} X_{n-1} + \rho X_0(\veps_0 - X_0)$.
\end{proof}
We are now in the position to prove Theorem \ref{THM_PDMGAUSS_THETA}. We shall make use of the following deviation principle for martingales established by Worms \cite{Worms99}.
\begin{thm}[Worms]
\label{THM_WORMS_GAUSS}
Let $(Y_{n})$ be an adapted sequence with values in $\dR^p$, and $(V_{n})$ a gaussian noise with variance $\sigma^2 > 0$. We suppose that $(Y_{n})$ satisfies, for some invertible square matrix $C$ of order $p$ and a speed sequence $(b_{n}^2)$ such that $b_{n}^2 = o(n)$, the exponential convergence for any $\delta > 0$,
\begin{equation}
\label{Pgd_Sn_Gauss}
\lim_{n \rightarrow \infty} \frac{1}{b_{n}^2} \log \dP\left( \left\Vert \frac{1}{n} \sum_{k=0}^{n-1} Y_{k} Y_{k}^{\prime} - C \right\Vert > \delta \right) = -\infty.
\end{equation}
Then, the sequence
\begin{equation*}
\left( \frac{M_{n}}{b_{n}\sqrt{n}} \right)_{n \geq 1}
\end{equation*}
satisfies an LDP on $\dR^{p}$ of speed $b_{n}^2$ and good rate function
\begin{equation}
\label{Pgd_Worms_Rate_Gauss}
I(x) = \frac{1}{2 \sigma^2} x^{\prime} C^{-1} x
\end{equation}
where $(M_{n})$ is the martingale given by
\begin{equation*}
M_{n} = \sum_{k=1}^{n} Y_{k-1} V_{k}.
\end{equation*}
\end{thm}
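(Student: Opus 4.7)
The plan is to apply the G\"artner--Ellis theorem to the sequence $Z_n := M_n/(b_n\sqrt{n})$, exploiting the Gaussian structure of $(V_n)$ to obtain a closed-form expression for the moment generating function of $M_n$ and then transferring the superexponential convergence of the normalized quadratic variation into convergence of the normalized log-MGF.

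First, since $V_k\,\vert\,\cF_{k-1} \sim \cN(0,\sigma^2)$, iterating the identity $\dE[\exp(tV_k)\vert\cF_{k-1}] = \exp(\sigma^2 t^2/2)$ via the tower property gives, for every $\lambda \in \dR^p$,
\[
\dE[\exp(\lambda^{\prime} M_n)] = \dE\left[\exp\left(\tfrac{\sigma^2}{2}\lambda^{\prime} \Sigma_n \lambda\right)\right],\qquad \Sigma_n := \sum_{k=0}^{n-1} Y_k Y_k^{\prime}.
\]
Substituting $\lambda \mapsto (b_n/\sqrt{n})\lambda$ and dividing by $b_n^2$ reduces the task to identifying the limit
\[
\Lambda(\lambda) := \lim_{n\to\infty} \frac{1}{b_n^2}\log \dE\left[\exp\left(\tfrac{\sigma^2 b_n^2}{2}\lambda^{\prime}(\Sigma_n/n)\lambda\right)\right],
\]
which the hypothesis $\Sigma_n/n \superexp C$ suggests should equal $\frac{\sigma^2}{2}\lambda^{\prime} C \lambda$.

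For the \emph{lower bound}, restricting the expectation to $A_n(\delta) := \{\Vert \Sigma_n/n - C\Vert \leq \delta\}$ and using $\dP(A_n(\delta)) \to 1$ yields $\liminf_n \Lambda_n(\lambda) \geq \frac{\sigma^2}{2}\lambda^{\prime} C\lambda - \frac{\sigma^2}{2}\Vert\lambda\Vert^2 \delta$, and letting $\delta \to 0$ concludes. For the \emph{upper bound}, split along $A_n(\delta)$ and its complement: on $A_n(\delta)$ the integrand is bounded by $\exp(\frac{\sigma^2 b_n^2}{2}(\lambda^{\prime} C\lambda + \Vert\lambda\Vert^2\delta))$, while on $A_n(\delta)^c$ a Cauchy--Schwarz split pits the small probability $\dP(A_n(\delta)^c)^{1/2} \leq \exp(-\alpha b_n^2)$ (for arbitrary $\alpha>0$, by the superexponential hypothesis) against an \emph{a priori} exponential-moment bound on $\Sigma_n/n$. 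Securing such an a priori bound is the main technical obstacle: the $\superexp$ convergence controls tails but does not directly control the exponential moment of $\Sigma_n$ at scale $b_n^2$.

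To close this gap one appeals to Puhalskii's moderate-deviations criterion for locally square-integrable martingales (\cite{Puhalskii97}, \cite{Djellout02}): the required LDP for $M_n/(b_n\sqrt{n})$ follows from (i) the superexponential convergence $\langle M\rangle_n/n = \sigma^2\Sigma_n/n \to \sigma^2 C$, which is exactly our hypothesis, and (ii) an exponential Lindeberg condition on the jumps $\Delta M_k = Y_{k-1} V_k$. Condition (ii) is automatic in the Gaussian setting: combining the sub-Gaussian tail bound $\dE[V_k^2 \mathbf{1}_{\vert V_k\vert > t}] \leq C\, t\,e^{-t^2/(2\sigma^2)}$ with the superexponential control of $\Sigma_n/n$ forces the Lindeberg sum to decay faster than $\exp(-\alpha b_n^2)$ for any $\alpha > 0$. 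Once $\Lambda(\lambda) = \frac{\sigma^2}{2}\lambda^{\prime} C \lambda$ is thereby established, finiteness, convexity, and essential smoothness of $\Lambda$ on all of $\dR^p$ allow the G\"artner--Ellis theorem to produce the LDP for $(Z_n)$ with speed $b_n^2$ and good rate function
\[
I(x) = \sup_{\lambda \in \dR^p}\Big\{\lambda^{\prime} x - \tfrac{\sigma^2}{2}\lambda^{\prime} C\lambda\Big\} = \tfrac{1}{2\sigma^2}\,x^{\prime} C^{-1} x,
\]
where the invertibility of $C$ is used in the explicit Legendre--Fenchel computation.
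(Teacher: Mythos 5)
The paper does not actually prove this statement: it is imported as an external result, the entire ``proof'' being the remark that it is contained in the proof of Theorem 5 of Worms (1999) with $d=1$. So you are attempting something the paper never does, and your attempt has two genuine gaps. The first is the identity $\dE[\exp(\lambda^{\prime}M_n)]=\dE[\exp(\tfrac{\sigma^2}{2}\lambda^{\prime}\Sigma_n\lambda)]$, which is false when $(Y_k)$ is a genuinely random adapted sequence. Conditioning on $\cF_{n-1}$ gives
\[
\dE\big[\exp(\lambda^{\prime}M_n)\,\big\vert\,\cF_{n-1}\big]=\exp(\lambda^{\prime}M_{n-1})\exp\Big(\tfrac{\sigma^2}{2}(\lambda^{\prime}Y_{n-1})^2\Big),
\]
and the second factor is $\cF_{n-1}$-measurable and correlated with $M_{n-1}$, so the iteration does not decouple; what the tower property really yields is $\dE[\exp(\lambda^{\prime}M_n-\tfrac{\sigma^2}{2}\lambda^{\prime}\Sigma_n\lambda)]=1$, a weaker statement. (A one-step example with $Y_1=V_1$ already shows the two sides differ at order $\lambda^4$.) Since your whole lower/upper bound scheme rests on this identity, the G\"artner--Ellis half of the argument does not stand as written.

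The second gap is the claim that the exponential Lindeberg condition is ``automatic in the Gaussian setting.'' The sub-Gaussian tail of $V_k$ only handles the part of the event $\{\vert Y_{k-1}V_k\vert\geq a\sqrt{n}/b_n\}$ caused by a large $V_k$; when $\vert Y_{k-1}\vert$ is itself of order $\sqrt{n}/b_n$ the induced truncation level on $V_k$ is $O(1)$ and the tail bound gives nothing. One then needs a separate superexponential control of $\frac{1}{n}\sum_k\vert Y_k\vert^2\mathrm{I}_{\{\vert Y_k\vert>r\sqrt{n}/b_n\}}$, which does not follow from \eqref{Pgd_Sn_Gauss} alone --- this is exactly why the paper, in its non-Gaussian Section 4.4, must prove Lemma \ref{LEM_LINDEXP_CL} using the ergodicity of the underlying Markov chain, and must doubly truncate the martingale to meet Puhalskii's almost-sure boundedness hypothesis \eqref{H2_Puhalskii}, which the increments $Y_{k-1}V_k$ violate outright. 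The whole point of Worms' Gaussian theorem is that none of this machinery is needed when the noise is Gaussian: the exact exponential inequality $\dP(\lambda^{\prime}M_n\geq x,\ \lambda^{\prime}\langle M\rangle_n\lambda\leq y)\leq\exp(-x^2/(2y))$ holds with no Lindeberg or boundedness hypothesis and, combined with \eqref{Pgd_Sn_Gauss}, gives the deviation bounds directly (this is the mechanism the paper itself borrows in \eqref{Ineg_GaussMart}). Finally, the logic of your last paragraph splices the two routes incoherently: Puhalskii's theorem delivers the LDP directly and never establishes the limit $\Lambda(\lambda)$, so there is nothing left for G\"artner--Ellis to do once it is invoked.
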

\begin{proof}
The proof of Theorem \ref{THM_WORMS_GAUSS} is contained in the one of Theorem 5 of \cite{Worms99} with $d=1$.
\end{proof}

\medskip

\noindent{\bf Proof of Theorem \ref{THM_PDMGAUSS_THETA}.} Let us consider the decomposition
\begin{equation}
\label{Decomp_Theta}
\frac{\sqrt{n}}{b_{n}} \left( \tn - \theta^{*} \right) = \frac{\sqrt{n}}{b_{n}} \left( \frac{\sigma^2}{1 + \theta\rho} \right) \frac{M_{n}}{\langle M \rangle_{n}} + \frac{\sqrt{n}}{b_{n}} \left( \frac{1}{1 + \theta\rho} \right) \frac{R_{n}(\theta)}{S_{n-1}},
\end{equation}
that can be obtained by a straighforward calculation, where the remainder term $R_{n}(\theta)$ is defined in \eqref{Decomp_Pn}. First, by using the same methodology as in convergence \eqref{Pdm_X0_Gauss}, we obtain that for all $\delta > 0$ and for a suitable $t > 0$,
\begin{eqnarray}
\label{Cvg_Pdm_X0}
\limsup_{n \rightarrow \infty} \frac{1}{b_{n}^2} \log \dP\left( \frac{X_0^2}{b_{n} \sqrt{n}} > \delta \right) & \leq & \lim_{n \rightarrow \infty} \left( -t \delta \frac{\sqrt{n}}{b_{n}} \right) + \lim_{n \rightarrow \infty} \frac{1}{b_{n}^2} \log \dE \Big[ \exp(t X_0^2) \Big], \nonumber \\
 & = & -\infty,
\end{eqnarray}
since $b_{n} = o(\sqrt{n})$, and the same goes for any isolated term in \eqref{Decomp_Theta} of order 2 whose numerator do not depend on $n$. Moreover, under the gaussian assumption on the driven noise $(V_{n})$, it is not hard to see that
\begin{equation}
\label{Cvg_Pdm_MaxVn}
\frac{1}{b_{n} \sqrt{n}} \max_{1 \leq k \leq n} V_{k}^2 \hspace{0.1cm} \superexp 0.
\end{equation}
As a matter of fact, for all $\delta > 0$ and for all $t > 0$,
\begin{eqnarray*}
\dP\left( \max_{1 \leq k \leq n} V_{k}^2 \geq \delta b_{n} \sqrt{n} \right) & = & \dP\left( \bigcup_{k=1}^{n} \Big\{ V_{k}^2 \geq \delta b_{n} \sqrt{n} \Big\} \right) \hspace{0.2cm} \leq \hspace{0.3cm} \sum_{k=1}^{n} \dP\Big( V_{k}^2 \geq \delta b_{n} \sqrt{n} \Big), \\
& \leq & n \exp\left( - t \delta b_{n} \sqrt{n} \right) \dE\Big[ \exp\left( t V_1^2 \right) \Big].
\end{eqnarray*}
In addition, as soon as $0 < t < 1/(2 \sigma^2)$, $\dE\big[ \exp( t V_1^2 ) \big] < \infty$. Consequently,
\begin{eqnarray*}
\frac{1}{b_{n}^2} \log \dP\left( \max_{1 \leq k \leq n} V_{k}^2 \geq \delta b_{n} \sqrt{n} \right) & \leq & \frac{\log n}{b_{n}^2} - \frac{t \delta \sqrt{n}}{b_{n}} + \frac{\log \dE\Big[ \exp\left( t V_1^2 \right) \Big]}{b_{n}^2}, \\
& \leq & \frac{\sqrt{n}}{b_{n}} \left( \frac{\log n}{b_{n} \sqrt{n}} - t \delta + \frac{\log \dE\Big[ \exp\left( t V_1^2 \right) \Big]}{b_{n} \sqrt{n}} \right)
\end{eqnarray*}
which clearly leads to \eqref{Cvg_Pdm_MaxVn}. Furthermore, it follows from \eqref{AR} that
\begin{equation}
\label{Ineg_MaxXn}
\max_{1 \leq k \leq n} X_{k}^2 \leq \frac{1}{1 - \vert \theta \vert} X_0^2 + \left( \frac{1}{1 - \vert \theta \vert} \right)^{2} \max_{1 \leq k \leq n} \veps_{k}^2,
\end{equation}
as well as
\begin{equation}
\label{Ineg_MaxEn}
\max_{1 \leq k \leq n} \veps_{k}^2 \leq \frac{1}{1 - \vert \rho \vert} \veps_0^2 + \left( \frac{1}{1 - \vert \rho \vert} \right)^{2} \max_{1 \leq k \leq n} V_{k}^2.
\end{equation}
Then, we deduce from \eqref{Cvg_Pdm_X0}, \eqref{Cvg_Pdm_MaxVn}, \eqref{Ineg_MaxXn} and \eqref{Ineg_MaxEn} that
\begin{equation*}
\frac{1}{b_{n} \sqrt{n}} \max_{1 \leq k \leq n} \veps_{k}^2 \hspace{0.1cm} \superexp 0 \hspace{1cm} \text{and} \hspace{1cm} \frac{1}{b_{n} \sqrt{n}} \max_{1 \leq k \leq n} X_{k}^2 \hspace{0.1cm} \superexp 0,
\end{equation*}
which of course imply the exponential convergence of $X_{n}^2/(b_{n}\sqrt{n})$ to 0, with speed $b_{n}^2$. Therefore, we obtain that
\begin{equation}
\label{Cvg_Pdm_Rn}
\frac{R_n(\theta)}{b_{n} \sqrt{n}} \superexp 0.
\end{equation}
We infer from Lemma \ref{LEM_CVGEXP_SN} together with Lemma 4.1 of \cite{Worms00} that the following convergence is satisfied,
\begin{equation}
\label{Cvg_Inv_Sn}
\frac{n}{S_{n}} \superexp \frac{1}{\ell}
\end{equation}
where $\ell > 0$ is given by \eqref{LimS}. According to \eqref{Cvg_Pdm_Rn}, the latter convergence and again Lemma 4.1 of \cite{Worms00}, we deduce that
\begin{equation}
\label{Cvg_Pdm_Reste}
\frac{\sqrt{n}}{b_{n}} \left( \frac{1}{1 + \theta\rho} \right) \frac{R_{n}(\theta)}{S_{n-1}} \superexp 0.
\end{equation}
Hence, we obtain from \eqref{Cvg_Inv_Sn} that the same is true for
\begin{equation}
\label{Cvg_DiffS}
\frac{\sigma^2}{1 + \theta\rho} \frac{M_{n}}{b_{n} \sqrt{n}} \left( \frac{n}{\langle M \rangle_{n}} - \frac{1}{\sigma^2 \ell} \right) \superexp 0,
\end{equation}
since Lemma \ref{LEM_CVGEXP_SN} together with Theorem \ref{THM_WORMS_GAUSS} with $p=1$ directly show that $(M_{n} / (b_{n} \sqrt{n}))$ satisfies an LDP with speed $b_{n}^2$ and good rate function given, for all $x \in \dR$, by
\begin{equation}
\label{Pdm_Mn}
J(x) = \frac{x^2}{2 \ell \sigma^2}.
\end{equation}
As a consequence,
\begin{equation}
\label{Theta_Pdm_Equiv}
\frac{\sqrt{n}}{b_{n}} \left( \tn - \theta^{*} \right) \superexpequiv \frac{1}{\ell (1 + \theta\rho)} \frac{M_{n}}{b_{n} \sqrt{n}},
\end{equation}
and this implies that both of them share the same LDP, see e.g. \cite{DemboZeitouni98}. One shall now take advantage of the contraction principle \cite{DemboZeitouni98} to establish that $(\sqrt{n} (\tn - \theta^{*}) / b_{n})$ satisfies an LDP with speed $b_{n}^2$ and good rate function $I_{\theta}(x)$ given by \eqref{Theta_Gauss_Rate}. The contraction principle enables us to conclude that the good rate function of the LDP with speed $b_{n}^2$ associated with equivalence \eqref{Theta_Pdm_Equiv} is given by $I_{\theta}(x) = J(\ell (1+\theta\rho) x)$, that is
\begin{equation*}
I_{\theta}(x) = \frac{x^2}{2 \sigma^2_{\theta}},
\end{equation*}
which achieves the proof of Theorem \ref{THM_PDMGAUSS_THETA}. \hfill
$\mathbin{\vbox{\hrule\hbox{\vrule height1ex \kern.5em\vrule height1ex}\hrule}}$

\subsection*{}
\begin{center}
{\bf 4.2. Proof of Theorem \ref{THM_PDMGAUSS_RHO}.}
\end{center}
We need to introduce some more notations. For all $n \geq 2$, let
\begin{equation}
\label{Q} Q_{n} = \sum_{k=2}^{n} X_{k-2} V_{k}.
\end{equation}
In addition, for all $n \geq 1$, denote
\begin{equation}
\label{T}
T_{n} = 1+\theta^*\rho^* - \left( 1+\rho^*(\tn+\theta^*) \right) \frac{S_n}{S_{n-1}} + \left( 2\rho^*+\tn+\theta^* \right) \frac{P_n}{S_{n-1}} - \frac{Q_n}{S_{n-1}},
\end{equation}
where $S_{n}$ and $P_{n}$ are respectively given by \eqref{S} and \eqref{P}. Finally, for all $n \geq 0$, let
\begin{equation}
\label{J}
J_{n} = \sum_{k=0}^{n} \ek^{\hspace{0.1cm} 2}
\end{equation}
where the residual set $(\en)$ is given in \eqref{Est_Res}. A set of additional technical tools has to be expounded to make the proof of Theorem \ref{THM_PDMGAUSS_RHO} more tractable.

\begin{cor}
\label{COR_CVGEXP_QN}
By virtue of Lemma \ref{LEM_CVGEXP_SN} and under the same assumptions, we have the exponential convergence
\begin{equation*}
\frac{Q_{n}}{n} \superexp \ell_2
\end{equation*}
where $\ell_2 = ((\theta + \rho) \theta^{*} - \theta\rho) \ell$.
\end{cor}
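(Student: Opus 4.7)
The plan is to reduce $Q_n/n$ to a linear combination of sums whose superexponential limits are already in hand, in exactly the spirit of Corollary \ref{COR_CVGEXP_PN}. The key algebraic input is that eliminating $\veps_k$ between the two equations of \eqref{AR} produces the second-order recursion
\[
X_k = (\theta+\rho) X_{k-1} - \theta\rho X_{k-2} + V_k
\]
valid for $k \geq 2$. Multiplying this identity by $X_{k-2}$ and summing from $k=2$ to $n$ provides an exact decomposition of the relevant lag-$2$ cross-product sum as a linear combination of $P_{n-1}$, $S_{n-2}$, and the martingale $N_n$ of \eqref{N}, modulo a handful of boundary/initial terms built from fixed-index quantities such as $X_0^2$, $X_0 \veps_0$ or $X_0 V_1$.

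Once this decomposition is written down, the conclusion follows almost mechanically from the tools already assembled. Lemma \ref{LEM_CVGEXP_SN} gives $S_{n-2}/n \superexp \ell$ and Corollary \ref{COR_CVGEXP_PN} gives $P_{n-1}/n \superexp \theta^*\ell = \ell_1$. The proof of Lemma \ref{LEM_CVGEXP_SN} also explicitly records $N_n/n \superexp 0$, which is obtained by the Bercu--Touati inequality of \cite{BercuTouati08} applied to the gaussian martingale $N_n$, combined with the exponential control of $\langle N \rangle_n = \sigma^2 S_{n-2}$ inherited from \eqref{Pdm_Ln_Gauss} and \eqref{Ineg_Sn}. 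The finitely many boundary contributions divided by $n$ are $\superexp$-negligible by the direct Markov bound \eqref{Pdm_X0_Gauss} under assumptions \G{HYP_G1}--\G{HYP_G2}. Since superexponential convergence at speed $b_n^2$ is preserved under linear combinations (Lemma 4.1 of \cite{Worms00}), assembling these pieces yields
\[
\frac{Q_n}{n} \superexp (\theta+\rho)\theta^*\ell - \theta\rho\ell = \big((\theta+\rho)\theta^* - \theta\rho\big)\ell = \ell_2,
\]
which is precisely the announced limit.

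No substantive obstacle is expected: the statement is essentially an algebraic rearrangement glued to the superexponential machinery already set up for Lemma \ref{LEM_CVGEXP_SN} and Corollary \ref{COR_CVGEXP_PN}. The only points requiring mild care are writing the correct identity out of the second-order recursion and verifying that the finitely many boundary contributions become $b_n^2$-superexponentially negligible once divided by $n$; both are handled by the same Cramér--Chernoff/Markov estimates that controlled the isolated $X_0^2/n$, $\veps_0^2/n$ and $X_n^2/n$ terms in the proof of Lemma \ref{LEM_CVGEXP_SN}.
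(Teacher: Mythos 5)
Your argument is correct and is essentially the paper's own: the paper's proof rests on exactly the identity you derive, $Q_n=(\theta+\rho)P_{n-1}-\theta\rho S_{n-2}+N_n$ coming from $X_k=(\theta+\rho)X_{k-1}-\theta\rho X_{k-2}+V_k$, except that it further substitutes relation \eqref{Decomp_Pn} so the comparison is written against $((\theta+\rho)\theta^*-\theta\rho)S_n/n$ with error terms $\theta^*M_n/n$, $N_n/n$ and a residual $\xi_n^Q/n$, whereas you invoke Corollary \ref{COR_CVGEXP_PN} on $P_{n-1}$ directly; both give $\ell_2=((\theta+\rho)\theta^*-\theta\rho)\ell$. (As a side remark, the displayed definition \eqref{Q} of $Q_n$ is a typo --- as printed it coincides with $N_n$ --- and you correctly interpreted $Q_n$ as the lag-two cross-product sum $\sum_{k=2}^n X_kX_{k-2}$, the only reading consistent with the announced limit.)
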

\begin{proof}
The proof of Corollary \ref{COR_CVGEXP_QN} immediately follows from the inequality,
\begin{eqnarray}
\label{Decomp_Qn}
\left\vert \frac{Q_n}{n} - ((\theta + \rho) \theta^{*} - \theta\rho) \frac{S_n}{n} \right\vert & = & \left\vert \theta^{*} \frac{M_{n}}{n} + \frac{N_{n}}{n} + \frac{\xi_{n}^{Q}}{n} \right\vert, \nonumber\\
& \leq & \vert \theta^{*} \vert \frac{\vert M_{n} \vert}{n} + \frac{\vert N_{n} \vert}{n} + \frac{\vert \xi_{n}^{Q} \vert}{n}
\end{eqnarray}
where $\xi_{n}^{Q}$ is a residual made of isolated terms such that
\begin{equation*}
\frac{\xi_{n}^{Q}}{n} \superexp 0,
\end{equation*}
see e.g. the proof of Theorem 3.2 in \cite{BercuProia11} where more details are given on $\xi_{n}^{Q}$.
\end{proof}

\begin{lem}
\label{LEM_CVGEXP_AN}
Under the assumptions of Theorem \ref{THM_PDMGAUSS_RHO}, we have the exponential convergence
\begin{equation*}
A_{n} \superexp A
\end{equation*}
where
\begin{equation}
\label{A}
A_{n} = \frac{n}{1 + \theta\rho} \begin{pmatrix}
\displaystyle \frac{1}{S_{n-1}} & 0 \vspace{1ex} \\
\displaystyle \frac{T_{n}}{J_{n-1}} & \displaystyle -\frac{(\theta+\rho)}{J_{n-1}}
\end{pmatrix},
\end{equation}
and
\begin{equation}
\label{ALim}
A = \frac{1}{\ell (1 + \theta\rho) (1 - (\theta^{*})^2)} \begin{pmatrix}
1 - (\theta^{*})^2 & 0 \\
\theta\rho + (\theta^{*})^2 & -(\theta+\rho)
\end{pmatrix}.
\end{equation}
\end{lem}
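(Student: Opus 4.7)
The plan is to establish Lemma~\ref{LEM_CVGEXP_AN} entry by entry, by showing that each of the three nontrivial entries of $A_n$ converges $(b_n^2)$-superexponentially fast to the corresponding entry of $A$. Throughout, I will repeatedly use the fact that $(b_n^2)$-superexponential convergence in probability is preserved under sums, products and quotients with nonzero deterministic limits (see e.g. Lemma~4.1 of \cite{Worms00}). In addition, I will use the superexponential convergence $\tn \superexp \theta^*$, which is a direct consequence of Theorem~\ref{THM_PDMGAUSS_THETA}: for any $\delta > 0$, the good rate function $I_\theta(x) = x^2/(2\sigma_\theta^2)$ yields an upper bound of the form $\dP(\vert \tn - \theta^* \vert > \delta) \leq \exp(-c\, n)$ for some $c > 0$, hence superexponential decay at speed $b_n^2$ since $n/b_n^2 \to \infty$.

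The entry $(1,1)$ of $A_n$ is handled immediately by Lemma~\ref{LEM_CVGEXP_SN} together with \eqref{Cvg_Inv_Sn}. For the other nontrivial entries $(2,1)$ and $(2,2)$, the key step is to show $J_n/n \superexp \ell(1 - (\theta^*)^2)$. Using $\ek = X_k - \tn X_{k-1}$ and the normal equations $\tn S_{n-1} = P_n$, a direct computation gives
\begin{equation*}
\sum_{k=1}^{n} \ek^{\ 2} = S_n - X_0^2 - \frac{P_n^2}{S_{n-1}}.
\end{equation*}
Dividing by $n$ and combining $S_n/n \superexp \ell$ (Lemma~\ref{LEM_CVGEXP_SN}), $P_n/n \superexp \theta^* \ell$ (Corollary~\ref{COR_CVGEXP_PN}) and $n/S_{n-1} \superexp 1/\ell$ via \eqref{Cvg_Inv_Sn}, together with the negligibility $X_0^2/n \superexp 0$ (from \G{HYP_G2}, as in \eqref{Pdm_X0_Gauss}) and the analogous negligibility of the boundary term associated with $\widehat{\veps}_0^{\ 2}$, one obtains $J_n/n \superexp \ell(1 - (\theta^*)^2)$. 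Taking reciprocals then yields $n/J_{n-1} \superexp 1/\bigl(\ell(1 - (\theta^*)^2)\bigr)$, which settles entry $(2,2)$.

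It remains to establish $T_n \superexp \theta\rho + (\theta^*)^2$. The required ingredients are: $\tn \superexp \theta^*$, as noted above; $S_n/S_{n-1} = 1 + X_n^2/S_{n-1} \superexp 1$, which follows from $X_n^2/n \superexp 0$ (established in the proof of Lemma~\ref{LEM_CVGEXP_SN}) and $n/S_{n-1} \superexp 1/\ell$; the convergence $P_n/S_{n-1} \superexp \theta^*$ provided by Corollary~\ref{COR_CVGEXP_PN}; and $Q_n/S_{n-1} \superexp (\theta+\rho)\theta^* - \theta\rho$ provided by Corollary~\ref{COR_CVGEXP_QN}. Substituting these limits into the definition \eqref{T} of $T_n$ and simplifying with the identities $\rho^* = \theta\rho\theta^*$ and $\theta + \rho = (1+\theta\rho)\theta^*$ reduces the limit exactly to $\theta\rho + (\theta^*)^2$. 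Multiplication by $n/\bigl((1+\theta\rho) J_{n-1}\bigr)$ combined with the already-obtained convergence of $n/J_{n-1}$ then yields the desired limit for entry $(2,1)$, completing the proof. The only mildly delicate point is to invoke cleanly the stability of superexponential convergence under the algebraic operations above; no fine estimate beyond those already used for Lemma~\ref{LEM_CVGEXP_SN} and its corollaries is needed.
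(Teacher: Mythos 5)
Your proof follows essentially the same route as the paper's: entry by entry, using Lemma \ref{LEM_CVGEXP_SN}, Corollary \ref{COR_CVGEXP_PN}, Corollary \ref{COR_CVGEXP_QN}, the stability of superexponential convergence under algebraic operations (Lemma 4.1 of Worms), and a rewriting of $J_n$ in terms of $S_n$, $P_n$, $S_{n-1}$ (your normal-equation form $\sum_{k=1}^n \ek^{\,2} = S_n - X_0^2 - P_n^2/S_{n-1}$ is equivalent to the paper's $J_n = S_n - 2\tn P_n + \tn^{\,2} S_{n-1}$). The algebra identifying the limit of $T_n$ as $(\theta^*)^2+\theta\rho$ is correct.

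The one genuine flaw is your justification of $\tn \superexp \theta^*$. Theorem \ref{THM_PDMGAUSS_THETA} is a moderate deviation principle at speed $b_n^2$; its upper bound controls probabilities only at the scale $\exp(-c\,b_n^2)$ with $b_n^2 = o(n)$, so it cannot produce a bound of the form $\dP(\vert\tn-\theta^*\vert>\delta)\leq\exp(-cn)$. What it does give, for each fixed $L>0$, is $\limsup_n b_n^{-2}\log\dP\bigl(\tfrac{\sqrt n}{b_n}\vert\tn-\theta^*\vert>L\bigr)\leq -L^2/(2\sigma_\theta^2)$; since $\delta\sqrt n/b_n\to\infty$, one has $\dP(\vert\tn-\theta^*\vert>\delta)\leq\dP\bigl(\tfrac{\sqrt n}{b_n}\vert\tn-\theta^*\vert>L\bigr)$ for $n$ large, and letting $L\to\infty$ recovers the superexponential convergence — so the conclusion is true, but not by the route you state. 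There is also a logical inefficiency: invoking Theorem \ref{THM_PDMGAUSS_THETA} here is a detour, since $\tn = P_n/S_{n-1}$ and you already have $P_n/n\superexp\theta^*\ell$ and $n/S_{n-1}\superexp 1/\ell$ in hand, which give $\tn\superexp\theta^*$ directly; this is what the paper implicitly uses. With that repair, the proof is complete and matches the paper's.
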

\begin{proof}
Via \eqref{Cvg_Inv_Sn}, we directly obtain the exponential convergence,
\begin{equation}
\label{Cvg_A11}
\frac{1}{(1 + \theta\rho)} \frac{n}{S_{n-1}} \superexp \frac{1}{\ell (1 + \theta \rho)}.
\end{equation}
The combination of Lemma \ref{LEM_CVGEXP_SN}, Corollary \ref{COR_CVGEXP_PN}, Corollary \ref{COR_CVGEXP_QN} and Lemma 4.1 of \cite{Worms00} shows, after a simple calculation, that
\begin{equation}
\label{Cvg_T}
T_{n} \superexp (\theta^{*})^2 + \theta\rho.
\end{equation}
Moreover, $J_{n}$ given by \eqref{J} can be rewritten as
\begin{equation*}
J_{n} = S_{n} - 2 \tn P_{n} + \tn^{\hspace{0.1cm} 2} S_{n-1},
\end{equation*}
which leads, via Lemma 4.1 in \cite{Worms00}, to

\begin{equation}
\label{Cvg_J}
\frac{J_{n}}{n} \superexp \ell(1 - (\theta^{*})^2).
\end{equation}
Convergences \eqref{Cvg_T} and \eqref{Cvg_J} imply
\begin{equation}
\label{Cvg_A21}
\left( \frac{n}{1 + \theta\rho} \right) \frac{T_{n}}{J_{n-1}} \superexp \frac{(\theta^{*})^2 + \theta\rho}{\ell (1 + \theta \rho) (1 - (\theta^{*})^2)},
\end{equation}
and finally,
\begin{equation}
\label{Cvg_A22}
\left( \frac{n}{1 + \theta\rho} \right) \frac{\theta + \rho}{J_{n-1}} \superexp \frac{\theta + \rho}{\ell (1 + \theta\rho) (1 - (\theta^{*})^2)}.
\end{equation}
Finally, \eqref{Cvg_A11} together with \eqref{Cvg_A21} and \eqref{Cvg_A22} achieve the proof of Lemma \ref{LEM_CVGEXP_AN}.
\end{proof}

\medskip

\noindent{\bf Proof of Theorem \ref{THM_PDMGAUSS_RHO}.} We shall make use of the decomposition
\begin{equation}
\label{Decomp_Joint}
\frac{\sqrt{n}}{b_{n}} \begin{pmatrix}
\tn - \theta^{*} \\
\rn - \rho^{*}
\end{pmatrix} = \frac{1}{b_{n} \sqrt{n}} A_{n} Z_{n} + B_{n},
\end{equation}
where $A_{n}$ is given by \eqref{A}, $(Z_{n})_{n \geq 0}$ is the 2-dimensional vector martingale given by
\begin{equation}
\label{Z}
Z_{n} =  \begin{pmatrix}
M_{n} \\
N_{n}
\end{pmatrix},
\end{equation}
and where the remainder term
\begin{equation}
\label{B}
B_{n} = \frac{1}{(1 + \theta\rho)} \frac{\sqrt{n}}{b_{n}} \begin{pmatrix}
\displaystyle \frac{R_{n}(\theta)}{S_{n-1}} \vspace{1ex} \\
\displaystyle \frac{R_{n}(\rho)}{J_{n-1}}
\end{pmatrix}.
\end{equation}
The first component $R_{n}(\theta)$ is given in \eqref{Decomp_Pn} while $R_{n}(\rho)$, whose definition may be found in the proof of Theorem 3.2 in \cite{BercuProia11}, is made of isolated terms. Consequently, \eqref{Cvg_Pdm_X0} and \eqref{Cvg_Pdm_Rn} are sufficient to ensure that
\begin{equation*}
\frac{R_{n}(\theta)}{b_{n} \sqrt{n}} \superexp 0 \hspace{1cm} \text{and} \hspace{1cm} \frac{R_{n}(\rho)}{b_{n} \sqrt{n}} \superexp 0.
\end{equation*}
Therefore, we obtain that
\begin{equation}
\label{Cvg_B}
B_{n} \superexp 0.
\end{equation}
In addition, it follows from Lemma \ref{LEM_CVGEXP_AN} and Theorem \ref{THM_WORMS_GAUSS} with $p=2$ that $(Z_{n}/(b_{n}\sqrt{n}))$ satisfies an LDP on $\dR^2$ with speed $b_{n}^2$ and good rate function given, for all $x \in \dR^2$, by
\begin{equation}
\label{Pdm_Zn}
J(x) = \frac{1}{2 \sigma^2} x^{\prime} \Lambda^{-1} x,
\end{equation}
where
\begin{equation}
\label{Lam}
\Lambda = \ell \begin{pmatrix}
1 & \theta^{*} \\
\theta^{*} & 1
\end{pmatrix},
\end{equation}
since we have the exponential convergence
\begin{equation}
\label{Cvg_Exp_Proc_Joint_Gauss}
\frac{\langle Z \rangle_{n}}{n} \superexp \sigma^2 \Lambda
\end{equation}
by application of Lemma \ref{LEM_CVGEXP_SN} and Corollary \ref{COR_CVGEXP_PN}. One observes that $\det(\Lambda) = \ell^2 (1 - (\theta^{*})^2) > 0$ implying that $\Lambda$ is invertible. As a consequence,
\begin{equation}
\label{Cvg_DiffA}
\frac{1}{b_{n} \sqrt{n}} (A_{n} - A) Z_{n} \superexp 0,
\end{equation}
and we deduce from \eqref{Decomp_Joint} that
\begin{equation}
\label{Joint_Pdm_Equiv}
\frac{\sqrt{n}}{b_{n}} \begin{pmatrix}
\tn - \theta^{*} \\
\rn - \rho^{*} \\
\end{pmatrix}
\superexpequiv \frac{1}{b_{n} \sqrt{n}} A Z_{n}.
\end{equation}
This of course implies that both of them share the same LDP. The contraction principle \cite{DemboZeitouni98} enables us to conclude that the rate function of the LDP on $\dR^2$ with speed $b_{n}^2$ associated with equivalence \eqref{Joint_Pdm_Equiv} is given, for all $x \in \dR^2$, by $K(x) = J(A^{-1} x)$, that is
\begin{equation*}
K(x) = \frac{1}{2} x^{\prime} \Gamma^{-1} x,
\end{equation*}
where $\Gamma = \sigma^2 A \Lambda A^{\prime}$ is given by \eqref{Gamma}, and where we shall suppose that $\theta \neq -\rho$ to ensure that $A$ is invertible. In particular, the latter result also implies that the good rate function of the LDP on $\dR$ with speed $b_{n}^2$ associated with $(\sqrt{n} (\rn - \rho^{*})/b_{n})$ is given, for all $x \in \dR$, by
\begin{equation*}
I_{\rho}(x) = \frac{x^2}{2 \sigma^2_{\rho}},
\end{equation*}
where $\sigma^2_{\rho}$ is the last element of the matrix $\Gamma$. This achieves the proof of Theorem \ref{THM_PDMGAUSS_RHO}. \hfill
$\mathbin{\vbox{\hrule\hbox{\vrule height1ex \kern.5em\vrule height1ex}\hrule}}$

\subsection*{}
\begin{center}
{\bf 4.3. Proof of Theorem \ref{THM_PDMGAUSS_D}.}
\end{center}
For all $n \geq 1$, denote by $f_{n}$ the explosion coefficient associated with $J_{n}$ given by \eqref{J}, that is
\begin{equation}
\label{F}
f_{n} = \frac{J_{n} - J_{n-1}}{J_{n}} = \frac{\en^{\hspace{0.1cm} 2}}{J_{n}}.
\end{equation}
It follows from decomposition (C.4) in \cite{BercuProia11} that
\begin{equation}
\label{Decomp_D}
\frac{\sqrt{n}}{b_{n}} \left( \dn - D^{*} \right) = -2 \frac{\sqrt{n}}{b_{n}} \Big( 1 - f_{n} \Big) \Big( \rn - \rho^{*} \Big) + \frac{\sqrt{n}}{b_{n}} \zeta_{n},
\end{equation}
where the remainder term $\zeta_{n}$ is made of isolated terms. As before, we clearly have
\begin{equation*}
\frac{\sqrt{n}}{b_{n}} \zeta_{n} \superexp 0 \hspace{1cm} \text{and} \hspace{1cm} f_{n} \superexp 0.
\end{equation*}
As a consequence,
\begin{equation}
\label{D_Pdm_Equiv}
\frac{\sqrt{n}}{b_{n}} \left( \dn - D^{*} \right) \superexpequiv -2 \frac{\sqrt{n}}{b_{n}} \Big( \rn - \rho^{*} \Big),
\end{equation}
and this implies that both of them share the same LDP. The contraction principle \cite{DemboZeitouni98} enables us to conclude that the rate function of the LDP on $\dR$ with speed $b_{n}^2$ associated with equivalence \eqref{D_Pdm_Equiv} is given, for all $x \in \dR$, by $I_{D}(x) = I_{\rho}(-x/2)$, that is
\begin{equation*}
I_{D}(x) = \frac{x^2}{2 \sigma^2_{D}},
\end{equation*}
which achieves the proof of Theorem \ref{THM_PDMGAUSS_D}. \hfill
$\mathbin{\vbox{\hrule\hbox{\vrule height1ex \kern.5em\vrule height1ex}\hrule}}$

\subsection*{}
\begin{center}
{\bf 4.4. Proofs of Theorem \ref{THM_PDMCL_THETA}, Theorem \ref{THM_PDMCL_RHO} and Theorem \ref{THM_PDMCL_D}.}
\end{center}

We shall now propose a technical lemma ensuring that all results already proved under the gaussian assumption still hold under the Chen-Ledoux type condition.

\begin{lem}
\label{LEM_CVGEXP_CL}
Under \CL{HYP_CL}, \CL{HYP_E0} and \CL{HYP_X0}, all exponential convergences of Lemma \ref{LEM_CVGEXP_SN}, Corollary \ref{COR_CVGEXP_PN}, Corollary \ref{COR_CVGEXP_QN} and Lemma \ref{LEM_CVGEXP_AN} still hold.
\end{lem}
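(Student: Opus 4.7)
The plan is to revisit every exponential convergence established in the Gaussian section and to substitute each Gaussian-specific exponential bound by a counterpart valid under \CL{HYP_CL}--\CL{HYP_X0}; the algebraic decompositions \eqref{Decomp_Sn}, \eqref{Decomp_Pn}, \eqref{Decomp_Qn} and the deterministic inequalities \eqref{Ineg_Sn}, \eqref{Ineg_MaxXn}, \eqref{Ineg_MaxEn} are noise-free and will be reused verbatim. I will therefore focus on the three points where the law of $(V_n)$ genuinely entered the previous arguments: Cram\'er-Chernoff for $L_n/n\to\sigma^2$, Markov's inequality against $\dE[\exp(tX_0^2)]$ and $\dE[\exp(t\veps_0^2)]$ for the initial data, and the Bercu-Touati Gaussian martingale bound \eqref{Ineg_GaussMart} used to control $M_n/n$ and $N_n/n$.

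The first two replacements should be routine. For the first, the Chen-Ledoux MDP for partial sums of i.i.d.\ squares recalled in \eqref{Cvg_Chen_Ledoux} (Arcones \cite{Arcones01}, Eichelsbacher-L\"owe \cite{EichelsbacherLowe03}) under \CL{HYP_CL} with $a=2$ immediately yields $L_n/n\superexp\sigma^2$. For the second, the observation that $\{X_0^2>\delta n\}\subset\{X_0^2>\delta b_n\sqrt n\}$ for $n$ large lets \CL{HYP_X0} and \CL{HYP_E0} imply $X_0^2/n\superexp 0$ and $\veps_0^2/n\superexp 0$. A union bound combined with \CL{HYP_CL} gives $\max_{1\le k\le n}V_k^2/(b_n\sqrt n)\superexp 0$; plugging this into \eqref{Ineg_MaxXn} and \eqref{Ineg_MaxEn} then produces $\max_k X_k^2/(b_n\sqrt n)\superexp 0$, and in particular $X_n^2/n\superexp 0$.

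The hard part will be the martingale step, since the Bercu-Touati inequality is specific to Gaussian martingales. I plan to replace it by the MDP for square-integrable martingales of Puhalskii \cite{Puhalskii97} or Djellout \cite{Djellout02}, applied separately to $M_n=\sum X_{k-1}V_k$ and $N_n=\sum X_{k-2}V_k$. This requires checking (a) a tail control of $\langle M\rangle_n=\sigma^2 S_{n-1}$, and (b) a Chen-Ledoux condition on the jumps of the form $b_n^{-2}\log n\,\dP(|X_{k-1}V_k|>\delta b_n\sqrt n)\to-\infty$ uniformly in $k$. The subtlety with (a) is that the natural statement would involve the superexponential convergence $S_n/n\superexp\ell$, which is precisely what we are trying to establish. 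To sidestep this circularity, I will use the deterministic domination \eqref{Ineg_Sn} together with \CL{HYP_E0}, \CL{HYP_X0} and the already controlled $L_n/n$ as a crude but sufficient upper tail bound on $\langle M\rangle_n$. Condition (b) will follow by truncating on the superexponentially likely event $\{\max_{k\le n}X_{k-1}^2\le cb_n\sqrt n\}$ and applying \CL{HYP_CL} to $V_k$. The output is the MDP for $M_n/(b_n\sqrt n)$; evaluating its quadratic rate function at the diverging scale $\delta\sqrt n/b_n$ then yields $M_n/n\superexp 0$, and the same reasoning handles $N_n$.

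With the martingale step in hand, everything else is algebraic. Substituting the new exponential controls of $L_n/n$, the initial data, $X_n^2/n$, $M_n/n$ and $N_n/n$ into \eqref{Decomp_Sn} gives $S_n/n\superexp\ell$; the same substitution into \eqref{Decomp_Pn} and \eqref{Decomp_Qn} yields $P_n/n\superexp\ell_1$ and $Q_n/n\superexp\ell_2$; finally, Lemma 4.1 of \cite{Worms00} on reciprocals and products of superexponentially convergent sequences propagates the convergence through \eqref{Cvg_T}, \eqref{Cvg_J}, \eqref{Cvg_A11}, \eqref{Cvg_A21}, \eqref{Cvg_A22} and reproduces $A_n\superexp A$. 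The one genuinely new argument is thus the verification of (a) and (b) via \eqref{Ineg_Sn} and the maximum-bound truncation; the rest amounts to rerunning the Gaussian derivations with the new exponential controls plugged in.
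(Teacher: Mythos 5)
Your overall architecture is the right one and matches the paper's: keep the noise-free decompositions and deterministic dominations, replace Cram\'er--Chernoff by the Eichelsbacher--L\"owe/Arcones result under \CL{HYP_CL} with $a=2$ for $L_n/n$, replace the Gaussian Markov bounds on the initial data by \CL{HYP_E0} and \CL{HYP_X0} (using $b_n\sqrt n=o(n)$), and get $X_n^2/(b_n\sqrt n)\superexp 0$ from a union bound on $\max_k V_k^2$ pushed through \eqref{Ineg_MaxXn} and \eqref{Ineg_MaxEn}. You also correctly isolate the martingale step as the only genuinely new difficulty and correctly flag the circularity in using $\langle M\rangle_n/n\superexp\sigma^2\ell$ there.

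However, your resolution of that step has a real gap. The Puhalskii/Djellout martingale MDPs you invoke require, as hypothesis \eqref{H1_Puhalskii}, the full superexponential convergence of $\langle M\rangle_n/n$ to a \emph{constant}; a ``crude upper tail bound'' on $\langle M\rangle_n$ obtained from \eqref{Ineg_Sn} is not an admissible substitute, so you have not actually sidestepped the circularity --- you have only renamed it. Nor can you fall back on a bounded-differences inequality after truncating on $\{\max_k X_{k-1}^2\le cb_n\sqrt n\}$, since the increments $X_{k-1}V_k$ remain unbounded through $V_k$; making that work requires the double truncation in both $X$ and $V$ that the paper reserves for the finer MDP of Lemma \ref{LEM_PDM_MN_CL}, which is far heavier machinery than needed here. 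The paper's device is different and is the one ingredient your proposal is missing: Theorem 2.1 of \cite{BercuTouati08}, valid for \emph{every} locally square-integrable martingale, namely $\dP(\vert M_n\vert>x,\ \langle M\rangle_n+[M]_n\le y)\le 2\exp(-x^2/(2y))$. This converts a mere upper tail bound on $\langle M\rangle_n+[M]_n$ at level $nb$ into $M_n/n\superexp 0$, at the price of also controlling the total quadratic variation $[M]_n=\sum_{k=1}^n X_{k-1}^2V_k^2$. That control is obtained by Cauchy--Schwarz, the fourth-power analogue $T_n\le\gamma\Gamma_n$ of \eqref{Ineg_Sn} with $T_n=\sum X_k^4$ and $\Gamma_n=\sum V_k^4$, and the Eichelsbacher--L\"owe convergence $\Gamma_n/n\superexp\dE[V_1^4]$ under \CL{HYP_CL} with $a=4$ --- which is precisely why the hypotheses are stated for $a=4$ and not only $a=2$. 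Your proposal never mentions $[M]_n$ or the fourth-moment layer, so as written the martingale step does not close; once it is replaced by the Bercu--Touati argument, the remaining algebraic propagation through \eqref{Decomp_Sn}, \eqref{Decomp_Pn}, \eqref{Decomp_Qn} and Lemma 4.1 of \cite{Worms00} is exactly as you describe.
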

\begin{proof}
Under  \CL{HYP_CL}, \CL{HYP_E0} and \CL{HYP_X0}, and  following the same methodology as the one used to establish \eqref{Cvg_Pdm_Rn}, we get
\begin{equation}
\label{Cvg_Exp_Xn_CL}
 \frac{X_n^2}{b_{n} \sqrt{n}}\superexp 0,
\end{equation}
and Cauchy-Schwarz inequality implies that this is also the case for any isolated term of order 2, such as $X_{n} X_{n-1} /(b_{n} \sqrt{n})$. This allows us to control each remainder term. Note that \CL{HYP_E0}, \CL{HYP_X0} and \eqref{Cvg_Exp_Xn_CL} are obviously true for $\veps_0^4/n$, $X_0^4/n$, $\veps_0^2/n$, $X_0^2/n$ and $X_{n}^2/n$, since $b_{n} \sqrt{n} = o(n)$. Moreover, if follows from Theorem 2.2 of \cite{EichelsbacherLowe03} under \CL{HYP_CL} with $a = 2$, that
\begin{equation}
\label{Cvg_Exp_Ln_CL}
\frac{L_{n}}{n}\superexp \sigma^2.
\end{equation}
Furthermore, since $(M_n)$ is a locally square integrable martingale, we infer
from Theorem 2.1 of \cite{BercuTouati08} that for all $x, y >0$,
\begin{equation}
\label{BerCro}
\dP \Big( \vert M_{n} \vert > x, \langle M \rangle_{n} + [M]_{n} \leq y \Big) \leq 2 \exp\left( -\frac{x^2}{2y} \right),
\end{equation}
where the predictable quadratic variation $\langle M \rangle_{n} = \sigma^2 S_{n-1}$ is described in \eqref{S} and the total quadratic variation is given by $[M]_0=0$ and, for all $n \geq 1$, by
\begin{equation}
\label{Total_Var}
[M]_{n} = \sum_{k=1}^{n} X_{k-1}^2 V_{k}^2.
\end{equation}
According to (\ref{BerCro}), we have for all $\delta > 0$ and a suitable $b > 0$,
\begin{eqnarray*}
\dP \left( \frac{ \vert M_{n} \vert}{n} > \delta \right) & \leq & \dP\Big( \vert M_{n} \vert> \delta n,\langle M \rangle_{n} + [M]_{n} \leq n b \Big) + \dP\Big(\langle M \rangle_{n} + [M]_{n}> n b \Big), \\
&\leq & 2\exp\left(-\frac{n\delta^2}{2b} \right) + \dP\Big(\langle M \rangle_{n} + [M]_{n}> n b \Big), \\
& \leq & 2 \max\left( \dP\Big(\langle M \rangle_{n} + [M]_{n}> n b \Big) , 2\exp\left(-\frac{n\delta^2}{2b} \right)     \right).
\end{eqnarray*}
Consequently,
\begin{equation}
\label{Ineg_Mn_CL}
\limsup_{n \rightarrow \infty} \frac{1}{b_{n}^2} \log \dP\left( \frac{ \vert M_{n} \vert}{n} > \delta \right) \leq \limsup_{n \rightarrow \infty} \frac{1}{b_{n}^2} \log \dP\Big(\langle M \rangle_{n} + [M]_{n}> n b \Big). \vspace{0.3cm}
\end{equation}
We have for all $b>0$,
\begin{eqnarray}
\label{Ineq_TpsArret}
\dP\Big(\langle M \rangle_{n} + [M]_{n}> n b \Big) & \leq & \dP\bigg( \langle M \rangle_{n} >\frac{nb}{2} \bigg) + \dP\bigg( [M]_{n} > \frac{nb}{2} \bigg), \nonumber \\
& \leq & 2 \max\left( \dP\bigg( \langle M \rangle_{n} >\frac{nb}{2}\bigg), \dP\bigg( [M]_{n} > \frac{nb}{2}\bigg) \right).
\end{eqnarray}
Moreover, for all $n \geq 1$, let us define
\begin{equation*}
T_{n} = \sum_{k=0}^{n} X_{k}^4 \hspace{1cm} \text{and} \hspace{1cm} \Gamma_{n} = \sum_{k=1}^{n} V_{k}^4,
\end{equation*}
and note that we easily have the following inequality,
\begin{equation}
\label{Ineg_Tn}
T_{n} \leq \alpha X_0^4 + \beta \veps_0^4 + \beta \Gamma_{n} \hspace{0.5cm} \textnormal{a.s.}
\end{equation}
with $\alpha = 1 + (1 - \vert \theta \vert)^{-4}$ and $\beta = (1 - \vert \rho \vert)^{-4} (1 - \vert \theta \vert)^{-4}$. This implies that, for $n$ large enough, one can find $\gamma > 0$ such that
\begin{equation*}
T_{n} \leq \gamma \Gamma_{n} \hspace{0.5cm} \textnormal{a.s.}
\end{equation*}
choosing for example $\gamma = 3 \max(\alpha, \beta)$, under \CL{HYP_E0} and \CL{HYP_X0} for $a =4$. According to Theorem 2.2 of \cite{EichelsbacherLowe03} under \CL{HYP_CL} with $a = 4$, we also have the exponential convergence,
\begin{equation}
\label{Cvg_Exp_Gn_CL}
\frac{\Gamma_{n}}{n} \superexp \tau^4,
\end{equation}
where $\tau^4 = \dE[V_{1}^4]$, leading, via Cauchy-Schwarz inequality and \eqref{Ineg_Tn}, to
\begin{eqnarray}
\label{Cvg_Exp_Tn_CL}
\limsup_{n \rightarrow \infty} \frac{1}{b_{n}^2} \log \dP\left( \frac{[M]_{n}}{n} > \delta \right) & \leq & \limsup_{n \rightarrow \infty} \frac{1}{b_{n}^2} \log \dP\left( \frac{\Gamma_{n}}{n} > \frac{\delta}{\sqrt{\gamma}} \right), \nonumber \\
 & = & -\infty,
\end{eqnarray}
where $\delta > \tau^4 \sqrt{\gamma}$. Exploiting \eqref{Ineg_Sn} and \eqref{Cvg_Exp_Ln_CL}, the same result can be achieved for $\langle M \rangle_{n}/n$ under \CL{HYP_CL} with $a=2$ and $\delta > \sigma^4 \gamma$. As a consequence, it follows from \eqref{Ineq_TpsArret}, \eqref{Cvg_Exp_Tn_CL} and the latter remark that
\begin{eqnarray}
\label{Sinfini}
\limsup_{n \rightarrow \infty} \frac{1}{b_{n}^2} \log \dP\left( \frac{\langle M \rangle_{n} + [M]_{n}}{n} > b \right) = -\infty,
\end{eqnarray}
as soon as $b > \sigma^4 \gamma + \tau^4 \sqrt{\gamma}$. Therefore, the exponential convergence of $M_{n}/n$ to 0 with speed $b_{n}^2$ is obtained
via \eqref{Ineg_Mn_CL} and \eqref{Sinfini}, that is, for all $\delta >0$ and $b > \sigma^4 \gamma + \tau^4 \sqrt{\gamma}$,
\begin{equation}
\label{Cvg_Exp_Mn_CL}
\limsup_{n \rightarrow \infty} \frac{1}{b_{n}^2} \log \dP\left( \frac{|M_{n}|}{n} > \delta \right) = -\infty.
\end{equation}
The same obviously holds for $N_{n}/n$. Following the same lines as in the proofs of Lemma \ref{LEM_CVGEXP_SN}, Corollary \ref{COR_CVGEXP_PN}, Corollary \ref{COR_CVGEXP_QN} and Lemma \ref{LEM_CVGEXP_AN}, hypothesis \CL{HYP_E0} and \CL{HYP_X0} with $a=4$ together with exponential convergences \eqref{Cvg_Exp_Xn_CL}, \eqref{Cvg_Exp_Ln_CL} and \eqref{Cvg_Exp_Mn_CL} are sufficient to achieve the proof of Lemma \ref{LEM_CVGEXP_CL}.
\end{proof}
Let us introduce a simplified version of Puhalskii's result \cite{Puhalskii97} applied to a sequence of martingale differences, and two technical lemmas that shall help us to prove our results.
\begin{thm}[Puhalskii]
\label{THM_PUHALSKII_CL}
Let $(m_{j}^{n})_{1 \leq j \leq n}$ be a triangular array of martingale differences with values in $\dR^{d}$, with respect to the filtration $(\cF_{n})_{n \geq 1}$. Let $(b_{n})$ be a sequence of real numbers satisfying \eqref{BN_RATE}. Suppose that there exists a symmetric positive-semidefinite matrix $Q$ such that
\begin{equation}
\label{H1_Puhalskii}
\frac{1}{n} \sum_{k=1}^{n} \dE\Big[ m_{k}^{n} (m_{k}^{n})^{\prime} \big\vert \cF_{k-1} \Big] \superexp Q.
\end{equation}
Suppose that there exists a constant $c > 0$ such that, for each $1 \leq k \leq n$,
\begin{equation}
\label{H2_Puhalskii}
\vert m_{k}^{n} \vert \leq c \frac{\sqrt{n}}{b_{n}} \hspace{0.5cm} \textnormal{a.s.}
\end{equation}
Suppose also that, for all $a > 0$, we have the exponential Lindeberg's condition
\begin{equation}
\label{H3_Puhalskii}
\frac{1}{n} \sum_{k=1}^n \dE\Big[\vert m_{k}^{n} \vert^2 \mathrm{I}_{\left\{ \vert m_{k}^{n} \vert \geq a \frac{\sqrt{n}}{b_{n}} \right\}} \big\vert \cF_{k-1} \Big] \superexp 0.
\end{equation}
Then, the sequence
\begin{equation*}
\left( \frac{1}{b_{n} \sqrt{n}} \sum_{k=1}^{n} m_{k}^{n} \right)_{n \geq 1}
\end{equation*}
satisfies an LDP on $\dR^{d}$ with speed $b_{n}^2$ and good rate function
\begin{equation*}
\Lambda^*(v) = \sup_{\lambda \in \dR^{d}} \left( \lambda^{\prime} v - \frac{1}{2}\lambda^{\prime} Q \lambda\right).
\end{equation*}
In particular, if $Q$ is invertible,
\begin{equation}
\label{Pgd_Puhalskii_Rate_CL}
\Lambda^*(v) = \frac{1}{2} v^{\prime} Q^{-1} v.
\end{equation}
\end{thm}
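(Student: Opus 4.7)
The plan is to apply a Gärtner-Ellis type argument to $\bar S_n := \frac{1}{b_n\sqrt n}\sum_{k=1}^n m_k^n$, with the normalised cumulant generating function taken at speed $b_n^2$. Concretely, one establishes that for every $\lambda\in\dR^d$,
$$\Psi_n(\lambda) := \frac{1}{b_n^2}\log \dE\bigl[\exp(b_n^2\,\lambda'\bar S_n)\bigr] \longrightarrow \tfrac{1}{2}\lambda'Q\lambda.$$
Since this limit is smooth and convex in $\lambda$, the Gärtner-Ellis theorem then delivers the LDP at speed $b_n^2$ with rate function $\Lambda^*(v)=\sup_\lambda\{\lambda'v-\tfrac12\lambda'Q\lambda\}$, which reduces to $\tfrac12 v'Q^{-1}v$ when $Q$ is invertible.

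To compute $\Psi_n(\lambda)$ one rescales with $\lambda_n := \frac{b_n}{\sqrt n}\lambda$, so that by \eqref{H2_Puhalskii} the scalar $\lambda_n'm_k^n$ is bounded by $c|\lambda|$ almost surely, uniformly in $k,n$. The process
$$Z_n(\lambda) = \prod_{k=1}^n \frac{\exp(\lambda_n'm_k^n)}{\dE\bigl[\exp(\lambda_n'm_k^n)\bigm|\cF_{k-1}\bigr]}$$
is a positive martingale of mean one, so the desired convergence reduces to proving that $V_n(\lambda)/b_n^2 \to \tfrac12\lambda'Q\lambda$ superexponentially at speed $b_n^2$, where $V_n(\lambda) := \sum_{k=1}^n\log\dE[\exp(\lambda_n'm_k^n)\mid\cF_{k-1}]$. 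A second-order Taylor expansion of the exponential, combined with $\log(1+u)=u-u^2/2+O(u^3)$ for bounded $u$, isolates a leading contribution
$$\tfrac12\lambda'\biggl(\tfrac{b_n^2}{n}\sum_{k=1}^n\dE\bigl[m_k^n(m_k^n)'\bigm|\cF_{k-1}\bigr]\biggr)\lambda,$$
so assumption \eqref{H1_Puhalskii} handles the main term directly; the martingale difference property annihilates the first-order term.

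The remainder of the expansion consists of cubic and higher terms in $\lambda_n' m_k^n$. Splitting each summand on $\{|m_k^n|\le a\sqrt n/b_n\}$ and its complement, the first part contributes at most $a\,|\lambda|^3$ times the second-order sum, while the complementary part is controlled by the exponential Lindeberg condition \eqref{H3_Puhalskii}, which provides its superexponential negligibility. Sending $a\to 0$ after the $\tfrac{1}{b_n^2}\log$ limit closes the estimate for $\Psi_n(\lambda)$. The LDP upper bound on closed sets then follows from the exponential Chebyshev inequality applied through $Z_n(\lambda)$, combined with exponential tightness (itself an easy consequence of \eqref{H2_Puhalskii}); the lower bound on open sets is obtained by the usual tilting argument, changing measure via $d\dP_\lambda = Z_n(\lambda)\,d\dP$, under which $\bar S_n$ concentrates at $Q\lambda$, and then invoking Legendre duality to recover $\Lambda^*$.

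The main obstacle is the fine, speed-$b_n^2$ control of the Taylor remainder: the rate $b_n/\sqrt n\to 0$ is precisely what forces the cubic correction to vanish at the right scale, but making this rigorous requires coupling the almost sure bound \eqref{H2_Puhalskii} with the conditional Lindeberg condition \eqref{H3_Puhalskii}, neither of which suffices in isolation. Once this delicate estimate is secured, the remaining steps are routine.
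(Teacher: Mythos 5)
First, note what you are competing against: the paper does not prove this statement at all. It is presented as a simplified special case of Puhalskii's result, and the entire ``proof'' is a one-line citation to Theorem 3.1 of \cite{Puhalskii97}. So any genuine argument is a different route; the question is whether yours closes.

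Your skeleton --- the stochastic exponential $Z_n(\lambda)$, a Taylor expansion of the conditional cumulant $V_n(\lambda)$ whose quadratic term is handled by \eqref{H1_Puhalskii}, with the remainder split on $\{|m_k^n|\le a\sqrt n/b_n\}$ and controlled by \eqref{H2_Puhalskii} and \eqref{H3_Puhalskii} --- is indeed the standard machinery behind such martingale MDPs. But the opening reduction contains a genuine gap. You assert that the LDP follows from G\"artner--Ellis once $\Psi_n(\lambda)=\frac{1}{b_n^2}\log\dE[\exp(b_n^2\lambda'\bar S_n)]\to\frac12\lambda'Q\lambda$, and that this ``reduces to'' superexponential convergence of $V_n(\lambda)/b_n^2$. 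That reduction fails under the stated hypotheses, because \eqref{H1_Puhalskii} is only a convergence in probability at speed $b_n^2$. On the exceptional event where $\frac1n\sum_k\dE[m_k^n(m_k^n)'|\cF_{k-1}]$ is far from $Q$, the only available bound is \eqref{H2_Puhalskii}, which gives $b_n^2|\lambda'\bar S_n|\le c|\lambda|\,n$ with $\bar S_n=(b_n\sqrt n)^{-1}\sum_k m_k^n$; the contribution of that event to the Laplace transform is therefore only bounded by $\exp(c|\lambda|n)\,\dP(\mathrm{bad})$. Knowing merely that $\dP(\mathrm{bad})\le\exp(-Kb_n^2)$ for every fixed $K$ does not beat $\exp(c|\lambda|n)$, since $b_n^2=o(n)$: the quantity $\frac{1}{b_n^2}\bigl(c|\lambda|n+\log\dP(\mathrm{bad})\bigr)$ is of the form $\infty-\infty$ and can diverge to $+\infty$. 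Hence $\Psi_n(\lambda)$ need not converge and G\"artner--Ellis cannot be invoked in this naive form. The missing idea --- and the reason the proofs of Puhalskii, Worms and Djellout are not routine --- is localization: stop (or truncate) the martingale at the first time the predictable quadratic variation leaves a neighbourhood of $nQ$, carry out your cumulant computation for the stopped array (where it does go through), and then show the stopped and unstopped sums are exponentially equivalent at speed $b_n^2$ using the superexponential smallness of the bad event together with the a.s.\ bound \eqref{H2_Puhalskii}. The same caveat applies to your lower bound: the tilted measure $d\dP_\lambda=Z_n(\lambda)\,d\dP$ only concentrates $\bar S_n$ at $Q\lambda$ after this localization. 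Without that step the argument does not close; with it, you would essentially be reproducing Puhalskii's proof, which is what the paper simply cites.
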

\begin{proof}
The proof of Theorem \ref{THM_PUHALSKII_CL} is contained e.g. in the proof of Theorem 3.1 in \cite{Puhalskii97}.
\end{proof}
\begin{lem}
\label{LEM_LINDEXP_CL}
Under \CL{HYP_CL}, \CL{HYP_E0} and \CL{HYP_X0} with $a = 2$, we have for all $\delta > 0$,
\begin{equation*}
\limsup_{R \rightarrow \infty} \limsup_{n \rightarrow \infty} \frac{1}{n} \log \dP\left( \frac{1}{n} \sum_{k=1}^{n} X_{k}^2 \mathrm{I}_{\{ \vert X_{k} \vert > R \}} > \delta \right) < 0.
\end{equation*}
\end{lem}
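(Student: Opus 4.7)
The plan is to dominate $X_k^2\mathbf{1}_{\{|X_k|>R\}}$ by $X_k^4/R^2$, exploit the AR(1)/AR(1) linear representation to reduce $\frac{1}{n}\sum_k X_k^4$ to a quantity involving only $\sum_i V_i^4$ and the initial conditions, and then control that sum via the Chen--Ledoux hypothesis. Concretely, from $\mathbf{1}_{\{|X_k|>R\}}\le X_k^2/R^2$ I first get
\begin{equation*}
\frac{1}{n}\sum_{k=1}^n X_k^2\mathbf{1}_{\{|X_k|>R\}}\;\le\;\frac{1}{R^2}\cdot\frac{1}{n}\sum_{k=1}^n X_k^4.
\end{equation*}
Iterating the bivariate recursion $\Phi_n=A\Phi_{n-1}+W_n$ from \eqref{Mod_Vect}, and using that $\rho(A)=\max(|\theta|,|\rho|)<1$, two successive applications of weighted Cauchy--Schwarz on the geometrically decaying convolution coefficients produce constants $C>0$ and $\lambda\in(0,1)$ such that
\begin{equation*}
X_k^4\;\le\;C\Big(X_0^4+\veps_0^4+\sum_{i=1}^k\lambda^{k-i}V_i^4\Big).
\end{equation*}
Summing over $k$ and swapping the order of summation by Fubini, in the same spirit as \eqref{Ineg_Tn} of Lemma~\ref{LEM_CVGEXP_CL}, yields
\begin{equation*}
\frac{1}{n}\sum_{k=1}^n X_k^4\;\le\;C'\Big(\frac{X_0^4+\veps_0^4}{n}+\frac{1}{n}\sum_{i=1}^n V_i^4\Big).
\end{equation*}

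A union bound then reduces the probability in question, for $c=c(C',\delta)>0$, to the sum of $\dP(X_0^4/n>cR^2)$, $\dP(\veps_0^4/n>cR^2)$, and $\dP(\frac{1}{n}\sum_i V_i^4>cR^2)$. The first two are superexponentially small via \CL{HYP_X0} and \CL{HYP_E0}, since $b_n\sqrt n=o(n)$; for the third, I would appeal to the Cramér-type estimate underlying \eqref{Cvg_Exp_Gn_CL} --- itself a consequence of Theorem~2.2 of \cite{EichelsbacherLowe03} --- whose large-deviation rate evaluated at level $cR^2$ diverges to $+\infty$ as $R\to\infty$. Taking first $\limsup_n$ and then $\limsup_R$ gives the claimed strict negativity.

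The main obstacle is this last bound on $\frac{1}{n}\sum V_i^4$: while the ergodic theorem trivially gives a.s. convergence to $\dE[V_1^4]$, obtaining an exponential upper bound whose rate genuinely diverges with $R$ requires more than \CL{HYP_CL} with $a=2$ alone. It is most naturally obtained from the $a=4$ version of Chen--Ledoux, or equivalently from the weaker form singled out in Remark~3.4 (an exponential bound on $\dP(\frac{1}{n}\sum V_k^4>C)$ at some large constant $C$). Bridging the gap with the literal $a=2$ hypothesis calls for an auxiliary truncation $V_i=V_i\mathbf{1}_{\{|V_i|\le M\}}+V_i\mathbf{1}_{\{|V_i|>M\}}$ with $M=M(R)\to\infty$: the bounded piece is then handled by classical Cramér bounds, while the tail piece is negligible by \CL{HYP_CL} itself. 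This truncation scheme, and the careful choice of $M(R)$ that balances the two contributions, is where the real technical work of the lemma resides.
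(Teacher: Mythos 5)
Your route is genuinely different from the paper's. You dominate $X_k^2\mathrm{I}_{\{|X_k|>R\}}$ by $X_k^4/R^2$ and reduce everything, through the linear representation (essentially \eqref{Ineg_Tn}), to the empirical fourth moment $\tfrac1n\sum_iV_i^4$, which you then control by \eqref{Cvg_Exp_Gn_CL}. The paper instead truncates the \emph{function} $f(x)=x^2$: writing $\Lambda_n=\tfrac1n\sum_k\delta_{X_k}$ for the empirical measure of the geometrically ergodic chain with invariant law $\mu$, it splits $\Lambda_n(f\,\mathrm{I}_{\{f\ge R\}})$ into bounded observables $\Lambda_n(f\wedge R)-\mu(f\wedge R)$ and $\Lambda_n(\widetilde f^{(R)})-\mu(\widetilde f^{(R)})$, handled by the moderate-deviation upper bound for empirical measures of Markov chains \cite{DjelloutGuillin01}, plus the untruncated term $\Lambda_n(f)-\mu(f)$, handled by Lemma \ref{LEM_CVGEXP_CL}, the deterministic remainders vanishing by dominated convergence as $R\to\infty$. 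Your argument is more elementary (Chebyshev, a weighted convolution bound, and \eqref{Cvg_Exp_Gn_CL}, with no empirical-measure MDP machinery), but the domination by $X_k^4/R^2$ is precisely the step that forces fourth-moment control of the noise, whereas the paper's function-level truncation never leaves the scale of $x^2$ and bounded functions --- which is why the lemma can at least formally be stated with $a=2$. That said, the paper's proof also calls Lemma \ref{LEM_CVGEXP_CL}, whose own proof consumes \CL{HYP_CL} with $a=4$ through \eqref{Cvg_Exp_Gn_CL}, so in the end both arguments draw on the same resource.

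Two points about the end of your argument. First, you do not need the rate at level $cR^2$ to diverge in $R$: \eqref{Cvg_Exp_Gn_CL} already yields $\limsup_n b_n^{-2}\log\dP\big(\tfrac1n\sum_iV_i^4>cR^2\big)=-\infty$ for any fixed $R$ with $cR^2>\dE[V_1^4]$, and the $b_n^2$-speed superexponential bound is all that the downstream use of the lemma (Remark \ref{REM_LINDEXP_CL}, condition \eqref{H3_Puhalskii}) requires; note that the literal $n^{-1}\log(\cdot)<0$ in the statement is in fact not what the paper's own proof delivers either. Second, your proposed truncation bridge down to pure $a=2$ cannot be closed: for a noise with exponential tails and $b_n=n^{\alpha}$ with $1/8<\alpha<1/6$, \CL{HYP_CL} holds with $a=2$ while $\dP\big(\tfrac1n\sum_iV_i^4>C\big)\ge\dP\big(|V_1|>(Cn)^{1/4}\big)=e^{-(Cn)^{1/4}}$ is not $b_n^2$-superexponentially small for any $C$; a single large $V_i$ defeats any truncation of the others. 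So the honest statement is that your proof is complete under \CL{HYP_CL}, \CL{HYP_E0}, \CL{HYP_X0} with $a=4$ (or under the weaker condition singled out in the paper's remarks, namely $\limsup_n b_n^{-2}\log\dP(\tfrac1n\sum_kV_k^4>C)=-\infty$ for some large $C$), hypotheses the paper assumes and uses elsewhere anyway; you should present it that way rather than try to rescue $a=2$.
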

\begin{rem}
\label{REM_LINDEXP_CL}
Lemma \ref{LEM_LINDEXP_CL} implies that the exponential Lindeberg's condition given by \eqref{H3_Puhalskii} is satisfied.
\end{rem}
\begin{proof}
We introduce the empirical measure associated with the geometric ergodic Markov
chain $(X_{n})_{n \geq 0}$,
\begin{equation}
\label{Mes_Emp}
\Lambda_{n} = \frac{1}{n} \sum_{k=1}^{n} \delta_{X_{k}},
\end{equation}
with invariant probability measure denoted by $\mu$. It is well-known that the sequence $(\Lambda_{n})$ satisfies the upper bound of the moderate deviations, see e.g. \cite{DjelloutGuillin01} for more details. Let us define, for $f(x) = x^2$, the following truncations,
\begin{equation*}
f^{(R)}(x) = f(x) \min \Big( 1, \big( f(x)-(R-1) \big)_{+} \Big) \hspace{0.5cm} \text{and} \hspace{0.5cm} \widetilde{f}^{(R)}(x) = \min\Big( f^{(R)}(x), R \Big).
\end{equation*}
Thus, we have
\begin{equation*}
0 \leq f(x) \mathrm{I}_{\{ f(x) \geq R \}} \leq f^{(R)}(x) \leq f(x),
\end{equation*}
and, as a consequence,
\begin{equation*}
0 \leq \Lambda_{n}\Big( f \mathrm{I}_{\{ f \geq R \}} \Big) \leq \Lambda_{n}\Big( f^{(R)}-\widetilde{f}^{(R)} \Big) + \Lambda_{n} \Big( \widetilde{f}^{(R)} \Big) - \mu\Big( \widetilde{f}^{(R)}\Big) + \mu\Big( \widetilde{f}^{(R)} \Big).
\end{equation*}
We also have
\begin{equation*}
f^{(R)} - \widetilde{f}^{(R)} = \Big( f^{(R)} - R \Big) \mathrm{I}_{\{ f^{(R)} \geq R \}} \leq \Big( f-R \Big) \mathrm{I}_{\{ f \geq R \}} = f- \Big( f \wedge R \Big).
\end{equation*}
For $\delta > 0$, the functions $\widetilde{f}^{(R)}$ and $f-(f \wedge R)$ are
continuous and bounded by $f$ which is $\mu$-integrable, and they converge to
0 as $R$ goes to infinity. By Lebesgue's Theorem, there exists $R > 0$
large enough such that $\mu(\widetilde{f}^{(R)}) + \mu(f- (f \wedge R)) < \delta/4.$
Thus,
\begin{eqnarray}
\label{Ineg_Tronc}
\dP\left( \frac{1}{n} \sum_{k=1}^{n} X_{k}^2 \mathrm{I}_{ \{ X_{k}^2 \geq R \}} > \delta \right) & \leq & \dP\Big( \Lambda_{n}(f) - \mu(f) > \delta/4 \Big) + \dP\Big( \Lambda_{n}(f \wedge R) - \mu(f \wedge R) > \delta/4 \Big) \nonumber\\
& & \hspace{0.5cm} + \hspace{0.2cm} \dP\Big( \Lambda_{n}(\widetilde{f}^{(R)}) - \mu(\widetilde{f}^{(R)}) > \delta/4 \Big).
\end{eqnarray}
From Lemma \ref{LEM_CVGEXP_CL}, we have that for all $\delta > 0$,
\begin{equation*}
\limsup_{n \rightarrow \infty} \frac{1}{b_{n}^2} \log \dP\Big( \Lambda_{n}(f) - \mu(f) > \delta \Big) = -\infty.
\end{equation*}
By the upper bound of the moderate deviation principle for the sequence $(\Lambda_{n})$ given in \cite{DjelloutGuillin01}, we obtain that
\begin{equation*}
\limsup_{R \rightarrow \infty} \limsup_{n \rightarrow \infty} \frac{1}{b_{n}^2} \log\dP\Big( \Lambda_{n} (f \wedge R) - \mu(f \wedge R) > \delta \Big) = -\infty,
\end{equation*}
and
\begin{equation*}
\limsup_{R \rightarrow \infty} \limsup_{n \rightarrow \infty} \frac{1}{b_{n}^2} \log\dP \Big( \Lambda_{n}(\widetilde{f}^{(R)}) - \mu(\widetilde{f}^{(R)}) > \delta \Big) = -\infty,
\end{equation*}
which, via inequality \eqref{Ineg_Tronc}, achieves the proof of Lemma \ref{LEM_LINDEXP_CL}. Note that Remark \ref{REM_LINDEXP_CL} is immediately derived from the latter proof, see e.g. \cite{Worms00} for more details.
\end{proof}
\begin{lem}
\label{LEM_PDM_MN_CL}
Under \CL{HYP_CL}, \CL{HYP_E0} and \CL{HYP_X0}, the sequence
\begin{equation*}
\left( \frac{M_{n}}{b_{n} \sqrt{n}} \right)_{n \geq 1}
\end{equation*}
satisfies an LDP on $\dR$ with speed $b_{n}^2$ and good rate function
\begin{equation}
\label{Pgd_M_Rate_CL}
J(x) = \frac{x^2}{2 \ell \sigma^2}
\end{equation}
where $\ell$ is given by \eqref{LimS}.
\end{lem}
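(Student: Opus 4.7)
The plan is to apply Puhalskii's martingale MDP (Theorem \ref{THM_PUHALSKII_CL}) with $d = 1$ to a suitably truncated triangular array derived from $M_n$, and then remove the truncation via exponential equivalence at speed $b_n^2$. The natural candidate for the array, $m_k^n = X_{k-1}V_k$, satisfies hypothesis \eqref{H1_Puhalskii} immediately: independence and centering of $V_k$ yield
\[
\frac{1}{n}\sum_{k=1}^n \dE\bigl[(X_{k-1}V_k)^2 \mid \cF_{k-1}\bigr] = \sigma^2\,\frac{S_{n-1}}{n} \superexp \sigma^2\ell,
\]
by Lemma \ref{LEM_CVGEXP_CL}, so the limiting covariance $Q = \sigma^2\ell$ produces precisely the rate $\Lambda^*(x) = x^2/(2\sigma^2\ell) = J(x)$ announced in \eqref{Pgd_M_Rate_CL}.

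The difficulty lies in the boundedness hypothesis \eqref{H2_Puhalskii}, which fails for $m_k^n = X_{k-1}V_k$. To fix this, I truncate both factors at thresholds tuned to the MDP scale. Using \CL{HYP_CL} with $a=2$, the variables $\tilde V_k = V_k\mathrm{I}_{\{|V_k|\leq \sqrt{n}/b_n\}}$, once re-centered conditionally on $\cF_{k-1}$, retain the martingale-difference property and satisfy $|\tilde V_k| \leq 2\sqrt{n}/b_n$. To control $X_{k-1}$, I stop the process at the first time $\tau_n$ when $X_k^2$ exceeds an appropriate threshold, exploiting that $\dP(\tau_n \leq n)$ is super-exponentially small at speed $b_n^2$ by Lemma \ref{LEM_CVGEXP_CL}. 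The resulting bounded martingale differences satisfy \eqref{H2_Puhalskii}, while \eqref{H3_Puhalskii} follows from Lemma \ref{LEM_LINDEXP_CL} (which provides the exponential Lindeberg tail control for $X_{k-1}^2$) combined with \CL{HYP_CL} for the $V$-tails.

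The final step is to show that $M_n/(b_n\sqrt{n})$ and the truncated-and-stopped sum divided by $b_n\sqrt{n}$ are exponentially equivalent in the sense of Definition 4.2.10 of \cite{DemboZeitouni98}, so that Theorem 4.2.13 transfers the LDP back to $M_n/(b_n\sqrt{n})$. The discrepancy from removing $V_k\mathrm{I}_{\{|V_k|>\sqrt{n}/b_n\}}$ is bounded by a martingale to which the Bercu-Touati inequality \eqref{BerCro} (already used in the proof of Lemma \ref{LEM_CVGEXP_CL}) applies, with super-exponentially small tail thanks to \CL{HYP_CL}; the discrepancy coming from stopping at $\tau_n$ vanishes super-exponentially by Lemma \ref{LEM_CVGEXP_CL}. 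The hardest piece is expected to be the simultaneous truncation of $X_{k-1}$ and $V_k$ in a way that preserves the martingale-difference property while remaining exponentially equivalent to $M_n$ at speed $b_n^2$; once this is achieved, Puhalskii's theorem provides the claimed LDP with rate $J(x) = x^2/(2\sigma^2\ell)$.
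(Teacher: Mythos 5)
Your overall architecture (Puhalskii's theorem with limiting variance $Q=\sigma^2\ell$, truncation of the martingale increments, exponential Lindeberg via Lemma \ref{LEM_LINDEXP_CL}) matches the paper's, and the identification of the rate $J(x)=x^2/(2\sigma^2\ell)$ is correct. However, your plan for removing the truncation has a genuine gap. You truncate $V_k$ at the growing level $\sqrt n/b_n$ and assert that the discrepancy $\sum_k X_{k-1}V_k\mathrm{I}_{\{|V_k|>\sqrt n/b_n\}}$ is superexponentially negligible at speed $b_n^2$ ``thanks to \CL{HYP_CL}''. This does not follow from the hypotheses: \CL{HYP_CL} with $a=2$ or $a=4$ only controls $n\,\dP\big(|V_1|>(b_n\sqrt n)^{1/a}\big)$, and for $b_n=n^{\alpha}$ with $\alpha$ close to $1/2$ the level $\sqrt n/b_n=n^{1/2-\alpha}$ lies strictly below $(b_n\sqrt n)^{1/4}=n^{\alpha/4+1/8}$, so the event that some $V_k$ exceeds your threshold is not superexponentially rare, and the total quadratic variation $\sum_k X_{k-1}^2V_k^2\mathrm{I}_{\{|V_k|>\sqrt n/b_n\}}$ of the error martingale cannot be brought below $\epsilon n$ with the required probability, which blocks the Bercu--Touati route. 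There is also an internal conflict with \eqref{H2_Puhalskii}: if $|V_k|$ is only bounded by $2\sqrt n/b_n$ after truncation, then $|m_k^n|\le c\sqrt n/b_n$ forces $|X_{k-1}|$ to be bounded by a constant, and the probability that your stopping time $\tau_n$ (at a constant level, or even at level $r\sqrt n/b_n$) occurs before $n$ is not superexponentially small and is not delivered by Lemma \ref{LEM_CVGEXP_CL}, which controls averages, not maxima at that scale.

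The paper's resolution of precisely this obstruction is the idea your proposal is missing. It truncates $V_k$ at a \emph{fixed} level $R$ (recentered), so that $|X_{k-1}^{(r)}V_k^{(R)}|\le 2rR\sqrt n/b_n$ yields \eqref{H2_Puhalskii}, and it accepts that the resulting error $F_n^{(r,R)}=\sum_k X_{k-1}^{(r)}(V_k-V_k^{(R)})$ is \emph{not} negligible for fixed $R$: instead, $F_n^{(r,R)}/(b_n\sqrt n)$ is shown via Theorem 1 of \cite{Djellout02} to satisfy its own LDP with rate $x^2/(2Q_R\ell)$, where $Q_R=\dE[(V_1-V_1^{(R)})^2]\to 0$ as $R\to\infty$. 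Hence $(M_n^{(r,R)})$ is an exponentially \emph{good approximation} of $(M_n)$ in the sense of Definition 4.2.14 of \cite{DemboZeitouni98}, and the LDP is transferred by Theorem 4.2.16, not by the plain exponential-equivalence statement you invoke. The $X$-truncation error $L_n^{(r)}=\sum_k(X_{k-1}-X_{k-1}^{(r)})V_k$ is handled not by a stopping time but by a H\"older estimate reducing it to $\frac1n\sum_k|V_k|^{2+\eta}$ with $\eta>1$, using \CL{HYP_CL} with $a=2+\eta>3$. You should restructure the truncation-removal step along these lines.
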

\begin{proof}
From now on, in order to apply Puhalskii's result for the moderate deviations for martingales, we introduce the following modification of the martingale $(M_n)_{n\ge 0}$, for $r>0$ and $R>0$,
\begin{equation}
\label{Mtrunc}
M_n^{(r,R)}=\sum_{k=1}^nX_{k-1}^{(r)}V_k^{(R)}
\end{equation}
where, for all $1 \leq k \leq n$,
\begin{equation}
\label{XVtrunc} X_k^{(r)}=X_k\mathrm{I}_{\big\{ |X_k| \leq r\frac{\sqrt n}{b_n} \big\}} \hspace{0.5cm} \text{and} \hspace{0.5cm} V_k^{(R)}=V_k\mathrm{I}_{\big\{ |V_k|\leq R \big\}}-\dE\Big[ V_k\mathrm{I}_{\big\{|V_k|\leq R \big\}} \Big].
\end{equation}
Then, we have to prove that for all $r>0$ the sequence $(M_n^{(r,R)})$ is an exponentially good approximation of $(M_n)$ as $R$ goes to infinity, see e.g. Definition 4.2.14 in \cite{DemboZeitouni98}. This approximation, in the sense of the large deviations, is described by the following convergence, for all $r> 0$ and all $\delta>0$,
\begin{equation}
\label{cont} \limsup_{R\rightarrow \infty}\limsup_{n\rightarrow \infty}\frac{1}{b_n^2}\log\dP\left(\frac{\vert M_n-M_n^{(r,R)} \vert}{b_n\sqrt n} > \delta\right)=-\infty.
\end{equation}
From Lemma \ref{LEM_CVGEXP_CL}, and since $\langle M \rangle_n = \sigma^2S_{n-1}$, we have
\begin{equation}
\label{croch}
\frac{\langle M \rangle_n}{n} \superexp \sigma^2 \ell.
\end{equation}
From Lemma \ref{LEM_CVGEXP_CL} and Remark \ref{REM_LINDEXP_CL}, we also have for all $r>0$,
\begin{equation}
\label{Linder} \frac{1}{n}\sum_{k=0}^{n} X_{k}^2 \mathrm{I}_{\left\{ |X_{k}| > r\frac{\sqrt n}{b_n} \right\}} \superexp 0.
\end{equation}
We introduce the following notations,
\begin{equation*}
\sigma_R^2=\dE \left[ ( V_1^{(R)} )^2 \right] \qquad {\rm and} \qquad S_n^{(r)} = \sum_{k=0}^n ( X_k^{(r)} )^2.
\end{equation*}
Then, we easily transfer properties $\eqref{croch}$ and $\eqref{Linder}$ to the truncated martingale $(M_n^{(r,R)})_{n\ge 0}$. We have for all $R>0$ and all $r>0$,
\begin{equation*}
\frac{\langle M^{(r,R)} \rangle_n}{n} = \sigma_R^2\frac{S_{n-1}^{(r)}}{n}=-\sigma_R^2\left( \frac{S_{n-1}}{n}-\frac{S_{n-1}^{(r)}}{n}\right)+\sigma_R^2\frac{S_{n-1}}{n} \superexp \sigma_R^2 \ell
\end{equation*}
which ensures that \eqref{H1_Puhalskii} is satisfied for the martingale $(M_n^{(r,R)})_{n\ge 0}$. Note also that Lemma \ref{LEM_CVGEXP_CL} and Remark \ref{REM_LINDEXP_CL} work for the martinagle $(M_n^{(r,R)})_{n\ge 0}$. So, for all $r>0$, the exponential Lindeberg's condition and thus \eqref{H3_Puhalskii} are satisfied for $(M_n^{(r,R)})_{n\ge 0}$.
By Theorem \ref{THM_PUHALSKII_CL}, we deduce that $(M_n^{(r,R)}/b_n\sqrt{n})$ satisfies an LDP on $\dR$ with speed $b_n^2$ and good rate function
\begin{equation}\label{rateIR}J_R(x)=\frac{x^2}{2\sigma_R^2\ell }.\end{equation}
It will be possible to drive the moderate deviations result for the martingale $(M_n)_{n\ge 0}$ by proving relation $\eqref{cont}$. For that matter, let us now introduce the following decomposition,
\begin{equation*}
M_n-M_n^{(r,R)}=L_n^{(r)}+F_n^{(r,R)}
\end{equation*}
where
\begin{equation*}
L_n^{(r)}=\sum_{k=1}^n \Big( X_{k-1}-X_{k-1}^{(r)} \Big)V_k \hspace{0.5cm} \text{and} \hspace{0.5cm} F_n^{(r,R)}=\sum_{k=1}^n \Big( V_k-V^{(R)}_k \Big) X_{k-1}^{(r)}.
\end{equation*}
One has to show that for all $r>0$,
\begin{equation}
\label{Lr}
\frac{L_n^{(r)}}{b_n\sqrt n} \superexp 0,
\end{equation}
and, for all $r>0$ and all $\delta>0$, that
\begin{equation}
\label{FR}
\limsup_{R\rightarrow \infty}\limsup_{n\rightarrow \infty}\frac{1}{b_n^2}\log\dP\left(\frac{|F_n^{(r,R)}|}{b_n\sqrt n}>\delta\right)= -\infty.
\end{equation}
On the one hand,
note that for any $\eta > 0$,
\begin{equation*}
\sum_{k=0}^{n} \vert X_{k} \vert^{2+\eta} \leq \alpha \vert X_0 \vert^{2+\eta} + \beta \vert \veps_0 \vert^{2+\eta} + \beta \sum_{k=1}^{n} \vert V_{k} \vert^{2+\eta} \hspace{0.5cm} \textnormal{a.s.}
\end{equation*}
with $\alpha = 1 + (1 - \vert \theta \vert)^{-(2+\eta)}$ and $\beta = (1 - \vert \rho \vert)^{-(2+\eta)} (1 - \vert \theta \vert)^{-(2+\eta)}$. This implies that, for $n$ large enough, one can find $\gamma > 0$ such that
\begin{equation}
\label{Ineg_XnVn}
\sum_{k=0}^{n} \vert X_{k} \vert^{2+\eta} \leq \gamma \sum_{k=1}^{n} \vert V_{k} \vert^{2+\eta} \hspace{0.5cm} \textnormal{a.s.}
\end{equation}
taking for example $\gamma = 3 \max(\alpha, \beta)$, under \CL{HYP_E0} and \CL{HYP_X0} for $a = 2+\eta$.
%Therefore, \eqref{Cvg_TauL} and the latter inequality entail that for all $\delta > 0$ , $t = \sqrt{n}/b_{n}$ and $b = \delta/t^2$,
%\begin{eqnarray}
%\label{Cvg_Terme1}
%\limsup_{n \rightarrow \infty} \frac{1}{b_n^2} \log\dP\left( \frac{L_n^{(r)}}{b_n\sqrt n} \geq \delta\right) & \leq & - \lim_{n \rightarrow \infty} \frac{n \delta}{2 b_{n}^2}, \nonumber \\
% & = & -\infty.
%\end{eqnarray}
Thus,
\begin{eqnarray}
\label{Ineq_Ln_Tronc}
\frac{\vert L_n^{(r)} \vert}{b_{n}\sqrt{n}} & = & \frac{1}{b_{n}\sqrt{n}} \left\vert \sum_{k=1}^{n} X_{k-1} \mathrm{I}_{\big\{ |X_{k-1}| > r\frac{\sqrt n}{b_n} \big\}} V_{k} \right\vert, \nonumber \\
 & \leq & \frac{1}{b_{n}\sqrt{n}} \left( r \frac{\sqrt{n}}{b_{n}} \right)^{\! -\eta} \left( \sum_{k=1}^{n} \vert X_{k-1} \vert^{2+\eta} \right)^{\! 1/2} \left( \sum_{k=1}^{n} V_{k}^{2} \vert X_{k-1} \vert^{\, \eta} \right)^{\! 1/2}, \nonumber \\
 & \leq & \lambda(r,\eta,\gamma) \left( \frac{b_{n}}{\sqrt{n}} \right)^{\! \eta-1} \frac{1}{n} \sum_{k=1}^{n} \vert V_{k} \vert^{2+\eta} \hspace{0.5cm} \textnormal{a.s.}
\end{eqnarray}
by virtue of \eqref{Ineg_XnVn} and H\"older's inequality, where $\lambda(r,\eta,\gamma) > 0$ can be evaluated under suitable assumptions of moment on $(V_{n})$. As a consequence, for all $\delta > 0$,
\begin{eqnarray}
\label{Cvg_Ln_1}
\limsup_{n \rightarrow \infty} \frac{1}{b_{n}^2} \log\dP\left( \frac{\vert L_n^{(r)} \vert}{b_{n}\sqrt{n}} > \delta \right) & \leq & \limsup_{n \rightarrow \infty} \frac{1}{b_{n}^2} \log\dP\left( \frac{1}{n} \sum_{k=1}^{n} \vert V_{k} \vert^{2+\eta} > \frac{\delta}{\lambda(r,\eta,\gamma)} \left( \frac{\sqrt{n}}{b_{n}} \right)^{\! \eta-1} \right), \nonumber \\
 & = & -\infty,
\end{eqnarray}
as soon as $\eta > 1$, under \CL{HYP_CL} with $a = 2+\eta$. We deduce that
\begin{equation}
\label{Cvg_Exp_Lnr}
\frac{L_n^{(r)}}{b_{n}\sqrt{n}} \superexp 0,
\end{equation}
which achieves the proof of \eqref{Lr}, under \CL{HYP_CL}, \CL{HYP_E0} and \CL{HYP_X0} for $a > 3$. On the other hand, $(F^{(r,R)}_n)_{n \geq 0}$ is a locally square-integrable real martingale whose predictable quadratic variation is given by $\langle F^{(r,R)} \rangle_0 = 0$ and, for all $n \geq 1$, by
\begin{equation*}
\langle F^{(r,R)} \rangle_n = \dE\left[\left(V_{1} - V_{1}^{(R)}\right)^{2}\right] S_{n-1}^{(r)}.
\end{equation*}
To prove (\ref{FR}), we will use Theorem 1 of \cite{Djellout02}. For $R$ large enough and all $k \geq 1$, we have
\begin{eqnarray*}
\dP\Bigg(\left\vert X_{k-1}^{(r)} \left(V_{k} - V_{k}^{(R)} \right) \right\vert
> b_{n}\sqrt{n} ~ \Big\vert \cF_{k-1}\Bigg) & \leq & \dP\left(\left\vert V_{k} -
V_{k}^{(R)} \right\vert > \frac{b_{n}^{2}}{r} \right),\\
 & = & \dP\left( \left\vert V_{1} -
V_{1}^{(R)} \right\vert > \frac{b_{n}^{2}}{r}\right) = 0.
\end{eqnarray*}
This implies that
\begin{equation}
\label{EssSup_Fn}
\limsup_{n\rightarrow \infty} \frac{1}{b_{n}^{2}} \log \left( n\,\,\,
\underset{k \geq 1} {\rm ess\,sup} ~
\dP\Bigg(\left\vert X_{k-1}^{(r)} \left(V_{k} - V_{k}^{(R)} \right) \right\vert
> b_{n}\sqrt{n} ~ \Big\vert \cF_{k-1}\Bigg) \right) = -\infty.
\end{equation}
For all $\gamma > 0$ and all $\delta > 0$, we obtain from
Lemma \ref{LEM_LINDEXP_CL} and Remark \ref{REM_LINDEXP_CL}, that
\begin{eqnarray*}
\limsup_{n\rightarrow \infty} \frac{1}{b_{n}^{2}}
\log \dP\left( \frac{1}{n} \sum_{k=1}^{n} \left( X_{k-1}^{(r)} \right)^{2} \mathrm{I}_{\left\{ \vert X_{k-1}^{(r)} \vert > \gamma \frac{\sqrt{n}}{b_{n}}\right\}} > \delta\right) \leq \hspace{5cm} \\
\hspace{5cm} \limsup_{n\rightarrow \infty} \frac{1}{b_{n}^{2}} \log \dP\left( \frac{1}{n} \sum_{k=1}^{n} X_{k-1}^{2} \mathrm{I}_{\left\{\vert X_{k-1} \vert > \gamma \frac{\sqrt{n}}{b_{n}}\right\}} > \delta\right) = -\infty.
\end{eqnarray*}
Finally, from Lemma \ref{LEM_CVGEXP_CL}, Lemma \ref{LEM_LINDEXP_CL} 
and Remark \ref{REM_LINDEXP_CL}, it follows that
\begin{equation*}
\frac{\langle F^{(r,R)} \rangle_n}{n} =
Q_R\frac{S_{n-1}^{(r)}}{n}=-Q_R\left(
\frac{S_{n-1}}{n}-\frac{S_{n-1}^{(r)}}{n}\right)+
Q_R\frac{S_{n-1}}{n} \superexp Q_R \ell
\end{equation*}
where
\begin{equation*}
Q_{R} = \dE\left[\left(V_{1} - V_{1}^{(R)}\right)^{2}\right],
\end{equation*}
and $\ell$ is given by \eqref{LimS}. Moreover, it is clear that $Q_{R}$ converges to 0 as $R$ goes to infinity. In light of foregoing, we infer from Theorem 1 of \cite{Djellout02} that
$(F_n^{(r,R)}/(b_{n}\sqrt{n}))$ satisfies an LDP on $\dR$ of speed $b_{n}^{2}$ and good rate function
\begin{equation*}
I_{R}(x) = \frac{x^{2}}{2 Q_{R} \ell}.
\end{equation*}
In particular, this implies that for all $\delta > 0$,
\begin{equation}
\label{Pdm_F}
\limsup\limits_{n\rightarrow\infty} \frac{1}{b_{n}^{2}}
\log\dP\left( \frac{|F_n^{(r,R)}|}{b_{n}\sqrt{n}}>\delta\right) \leq
-\frac{\delta^{2}}{2Q_{R}\ell},
\end{equation}
and letting $R$ go to infinity clearly leads to the end of the proof of \eqref{FR}. We are able to conclude now that  $(M_n^{(r,R)}/ (b_n \sqrt{n}))$ is an exponentially good approximation of $(M_n/(b_n\sqrt{n}))$. By application of  Theorem 4.2.16 in \cite{DemboZeitouni98}, we find that $(M_{n}/(b_n\sqrt{n}))$ satisfies an LDP on $\dR$ with speed $b_n^2$ and good rate function
\begin{equation*}
\widetilde{J}(x) = \sup_{\delta >0}\liminf_{R\rightarrow \infty}\inf_{z\in B_{x,\delta}}J_R(z),
\end{equation*}
where $J_R$ is given in \eqref{rateIR}  and $B_{x,\delta}$ denotes the ball $\{z:|z-x|<\delta\}.$ The identification of the rate function $\widetilde J= J$, where $J$ is given in \eqref{Pgd_M_Rate_CL} is done easily, which concludes the proof of Lemma \ref{LEM_PDM_MN_CL}.
\end{proof}

\begin{rem}
If we suppose that \CL{HYP_CL} holds with $a > 2$, then the
exponential Lindeberg's condition in Lemma \ref{LEM_LINDEXP_CL} is easier to establish. Indeed, using (\ref{Ineg_XnVn}), it follows that
\begin{equation*}
\left(r \frac{\sqrt n}{b_n}\right)^{\! \eta} \sum_{k=1}^{n} X_{k-1}^2 \mathrm{I}_{\left\{ \vert X_{k-1} \vert >  r\frac{\sqrt n}{b_n}\right\}} \leq  \sum_{k=1}^{n} |X_{k-1}|^{2+\eta} \leq \gamma \sum_{k=1}^{n} \vert V_{k} \vert^{2+\eta},
\end{equation*}
for $n$ large enough and $\eta > 0$, leading to
\begin{equation*}
\dP\left(\frac{1}{n}\sum_{k=1}^{n} X_{k-1}^2 \mathrm{I}_{\left\{ \vert X_{k-1} \vert > r\frac{\sqrt n}{b_n}\right\}} >\delta\right)\leq \dP\left(\frac{1}{n}\sum_{k=1}^{n}
\vert V_{k} \vert^{2+\eta}>\frac{\delta}{\gamma} \left(r \frac{\sqrt n}{b_n}\right)^{\! \eta}\right).
\end{equation*}
\end{rem}

\vspace{0.3cm}

\begin{lem}
\label{LEM_PDM_MNVect_CL}
Under \CL{HYP_CL}, \CL{HYP_E0} and \CL{HYP_X0}, the sequence
\begin{equation*}
\left( \frac{1}{b_{n} \sqrt{n}} \begin{pmatrix}
M_{n} \\
N_{n}
\end{pmatrix} \right)_{n \geq 1}
\end{equation*}
satisfies an LDP on $\dR^2$ with speed $b_{n}^2$ and good rate function
\begin{equation}
\label{Pgd_MVect_Rate_CL}
J(x) = \frac{1}{2 \sigma^2} x^{\prime} \Lambda^{-1} x
\end{equation}
where $\Lambda$ is given by \eqref{Lam}.
\end{lem}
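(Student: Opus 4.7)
The plan is to mimic the scalar argument of Lemma \ref{LEM_PDM_MN_CL}, but for the two-dimensional martingale
\begin{equation*}
Z_n = \begin{pmatrix} M_n \\ N_n \end{pmatrix} = \sum_{k=1}^{n} Y_{k-1} V_k, \qquad Y_{k-1} = \begin{pmatrix} X_{k-1} \\ X_{k-2} \end{pmatrix},
\end{equation*}
with the convention $X_{-1}=0$. First I would set up the joint truncation: put $Y_{k-1}^{(r)} = \big(X_{k-1}^{(r)}, X_{k-2}^{(r)}\big)'$ with the same truncation level as in \eqref{XVtrunc}, and $V_k^{(R)}$ as before, then define $Z_n^{(r,R)} = \sum_{k=1}^{n} Y_{k-1}^{(r)} V_k^{(R)}$. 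Componentwise, $Z_n - Z_n^{(r,R)}$ splits into $(L_n^{(r)}, \widetilde{L}_n^{(r)})' + (F_n^{(r,R)}, \widetilde{F}_n^{(r,R)})'$, where the tilded objects are the $N_n$-analogues. The exact arguments leading to \eqref{Lr} and \eqref{FR} apply verbatim to $\widetilde{L}_n^{(r)}$ and $\widetilde{F}_n^{(r,R)}$, since the bounds \eqref{Ineg_XnVn} and the supremum bounds used for $X_{k-1}$ work equally well for $X_{k-2}$. Thus $Z_n^{(r,R)}/(b_n\sqrt n)$ is an exponentially good approximation of $Z_n/(b_n\sqrt n)$.

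Next I would apply Puhalskii's Theorem \ref{THM_PUHALSKII_CL} in dimension $d=2$ to the truncated triangular array $m_k^n = Y_{k-1}^{(r)} V_k^{(R)}/\sqrt{n}$. The uniform bound \G{H2_Puhalskii} follows directly from the truncation: $|m_k^n| \leq 2R\sqrt{2}\, r/b_n$. The exponential Lindeberg condition \G{H3_Puhalskii} reduces to controlling $\frac{1}{n}\sum_k (X_{k-1}^2 + X_{k-2}^2) \mathrm{I}_{\{|Y_{k-1}^{(r)}||V_k^{(R)}| \geq a\sqrt n/b_n\}}$, which is handled by Lemma \ref{LEM_LINDEXP_CL} and Remark \ref{REM_LINDEXP_CL} applied to the two coordinates. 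The critical hypothesis \G{H1_Puhalskii} requires computing
\begin{equation*}
\frac{1}{n}\sum_{k=1}^{n} \dE\big[m_k^n (m_k^n)' \mid \cF_{k-1}\big] = \sigma_R^2 \cdot \frac{1}{n}\sum_{k=1}^{n} \begin{pmatrix} (X_{k-1}^{(r)})^2 & X_{k-1}^{(r)} X_{k-2}^{(r)} \\ X_{k-1}^{(r)} X_{k-2}^{(r)} & (X_{k-2}^{(r)})^2 \end{pmatrix}.
\end{equation*}
By Lemma \ref{LEM_CVGEXP_CL}, together with $S_n/n \superexp \ell$ and $P_n/n \superexp \theta^{*}\ell$ (Corollary \ref{COR_CVGEXP_PN}), the untruncated version of this Gram matrix converges $(b_n^2)$-superexponentially to $\ell\binom{\ 1\ \theta^*}{\theta^*\ \ 1}$; the truncation correction is controlled via \eqref{Linder} as in the scalar case. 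Hence \G{H1_Puhalskii} holds with $Q = \sigma_R^2 \Lambda$.

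Puhalskii's theorem then yields an LDP for $Z_n^{(r,R)}/(b_n\sqrt n)$ on $\dR^2$ with good rate function $J_R(v) = \tfrac{1}{2\sigma_R^2} v'\Lambda^{-1} v$ (noting $\det\Lambda = \ell^2(1-(\theta^*)^2)>0$, so $\Lambda$ is invertible). Finally I would pass from $Z_n^{(r,R)}$ to $Z_n$ via Theorem 4.2.16 of \cite{DemboZeitouni98}: the exponentially good approximation gives an LDP for $Z_n/(b_n\sqrt n)$ with rate
\begin{equation*}
\widetilde{J}(v) = \sup_{\delta >0}\liminf_{R\rightarrow\infty}\inf_{z\in B_{v,\delta}} J_R(z) = \frac{1}{2\sigma^2}v'\Lambda^{-1} v,
\end{equation*}
since $\sigma_R^2 \to \sigma^2$ as $R\to\infty$. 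The main obstacle I expect is the verification that the truncation argument of Lemma \ref{LEM_PDM_MN_CL} transfers cleanly to the joint martingale—in particular, that the exponentially good approximation holds simultaneously for both coordinates so that Theorem 4.2.16 of \cite{DemboZeitouni98} can be applied in $\dR^2$—but this is resolved coordinatewise using the already-established scalar estimates.
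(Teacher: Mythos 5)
Your proposal is correct and follows essentially the same route as the paper: truncate both coordinates, verify Puhalskii's hypotheses with $Q=\sigma_R^2\Lambda$, transfer the scalar estimates for $L_n^{(r)}$ and $F_n^{(r,R)}$ componentwise to establish the exponentially good approximation, and conclude via Theorem 4.2.16 of \cite{DemboZeitouni98} with the identification $\widetilde J=J$ as $\sigma_R^2\to\sigma^2$. The only blemish is notational: with $m_k^n=Y_{k-1}^{(r)}V_k^{(R)}/\sqrt n$ your displayed verification of \eqref{H1_Puhalskii} acquires an extra factor $1/n$; the array should be $m_k^n=Y_{k-1}^{(r)}V_k^{(R)}$, as your Gram-matrix computation in fact assumes.
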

\begin{proof} We follow the same approach as in the proof of Lemma \ref{LEM_PDM_MN_CL}.
We shall consider the 2-dimensional vector martingale $(Z_{n})_{n \geq 0}$ defined in \eqref{Z}. In order to apply Theorem \ref{THM_PUHALSKII_CL}, we introduce the following truncation of the martingale $(Z_n)_{n \ge 0}$, for $r>0$ and $R>0$,
\begin{equation*}
Z^{(r,R)}_n=\begin{pmatrix}
M^{(r,R)}_n\\
N^{(r,R)}_n
\end{pmatrix}
\end{equation*}
where $M_n^{(r,R)}$ is given in \eqref{Mtrunc} and where $N_n^{(r,R)}$ is defined in the same manner, that is, for all $n \geq 2$,
\begin{equation}
\label{Ntrunc} N_n^{(r,R)} = \sum_{k=2}^n X_{k-2}^{(r)}V_k^{(R)}
\end{equation}
with $X_n^{(r)}$ and  $V_n^{(R)}$ given by $(\ref{XVtrunc})$. The exponential convergence \eqref{Cvg_Exp_Proc_Joint_Gauss} still holds, by virtue of Lemma \ref{LEM_CVGEXP_CL}, which immediately implies hypothesis \eqref{H1_Puhalskii}. On top of that, Lemma \ref{LEM_LINDEXP_CL} ensures that, for all $r > 0$,
\begin{equation}
\label{linder}
\frac{1}{n}\sum_{k=0}^nX_{k}^2 \mathrm{I}_{\left\{ \vert X_{k} \vert > r\frac{\sqrt n}{b_n} \right\} } \superexp 0,
\end{equation}
justifying hypothesis \eqref{H3_Puhalskii}. Via Theorem \ref{THM_PUHALSKII_CL}, $(Z_n^{(r,R)}/(b_n\sqrt{n}))$ satisfies an LDP on $\dR^2$ with speed $b_{n}^2$ and good rate function $J_R$ given by
\begin{equation}
\label{tauxJR}
J_R(x) = \frac{1}{2 \sigma_R^2 } x^{\prime} \Lambda^{-1} x.
\end{equation}
Finally, it is straightforward to prove that $(Z_{n}^{(r,R)}/(b_{n} \sqrt{n}))$ is an exponentially good approximation of $(Z_{n}/(b_{n} \sqrt{n}))$. By application of Theorem 4.2.16 in \cite{DemboZeitouni98}, we deduce that $(Z_{n}/(b_{n} \sqrt{n}))$ satisfies an LDP on $\dR^2$ with speed $b_n^2$ and good rate function given by
\begin{equation*}
\widetilde{J}(x) = \sup_{\delta >0}\liminf_{R\rightarrow \infty}\inf_{z\in B_{x,\delta}}J_R(z),
\end{equation*}
 where $J_R$ is given in \eqref{tauxJR} and $B_{x,\delta}$ denotes the ball $\{z:|z-x|<\delta\}.$ The identification of the rate function $\widetilde J=J$ is done easily, which concludes the proof of Lemma \ref{LEM_PDM_MNVect_CL}.
\end{proof}

\medskip

\noindent{\bf Proofs of Theorem \ref{THM_PDMCL_THETA}, Theorem \ref{THM_PDMCL_RHO} and Theorem \ref{THM_PDMCL_D}.} The residuals appearing in the decompositions \eqref{Decomp_Theta}, \eqref{Decomp_Joint} and \eqref{Decomp_D} still converge exponentially to zero under \CL{HYP_CL}, \CL{HYP_E0} and \CL{HYP_X0}, with speed $b_{n}^2$, as it was already proved. Therefore, for a better readability, we may skip the most accessible parts of these proofs whose development merely consists in following the same lines as those in the proofs of Theorem \ref{THM_PDMGAUSS_THETA}, Theorem \ref{THM_PDMGAUSS_RHO} and Theorem \ref{THM_PDMGAUSS_D}, taking advantage of Lemma \ref{LEM_PDM_MN_CL} and Lemma \ref{LEM_PDM_MNVect_CL}, and applying the contraction principle given e.g. in \cite{DemboZeitouni98}. \hfill
$\mathbin{\vbox{\hrule\hbox{\vrule height1ex \kern.5em\vrule height1ex}\hrule}}$

\medskip

\noindent{\bf Acknowledgments.} \textit{The authors thank Bernard Bercu and Arnaud Guillin for all their advices and suggestions during the preparation of this work.}

\nocite{*}

\bibliographystyle{acm}
\bibliography{DMDW}

\vspace{10pt}

\end{document}